 \def\cV{\mathcal{V}}\def\cE{\mathcal{E}}
\newcommand{\cL}{{\mathcal L}}
\newcommand{\cT}{{\mathcal T}}
\renewcommand{\P}{{\mathcal P}}
\newcommand{\R}{{\mathbb R}}
\newcommand{\Z}{{\mathbb Z}}
\newcommand{\Q}{{\mathbb Q}}
\newcommand{\N}{{\mathbb N}}
\renewcommand{\v}{{\bf v}}
\newcommand{\ep}{\varepsilon}
\newtheorem{prop}{Proposition}[section]
\newtheorem{lem}[prop]{Lemma}
\newtheorem{coro}[prop]{Corollary}
\newtheorem{theo}[prop]{Theorem}
\newtheorem{exam}[prop]{Example}
\newtheorem{rema}[prop]{Remark}
\theoremstyle{definition}
\newtheorem{defi}[prop]{Definition}
 \def\cN{\mathcal{N}}
  \def\supp{{\rm supp}}
\def\bi{\mathbf{i}}
\def\bj{\mathbf{j}}
\newif\ifdraft
\def\ppmod{\!\!\pmod}
\numberwithin{equation}{section} % Ã¿œÚ, ¹«ÊœŽÓ1 ¿ªÊŒ±àºÅ
\begin{document}

\baselineskip=16pt
%\title{Measure and Hausdorff Dimension  of  the spectra for a Cantor measure}

%\begin{center}
%\Huge\textbf{Spectral Eigen-subspace and Tree Structure for a Cantor Measure}
%\end{center}

\title{Spectral Eigen-subspace and Tree Structure for a Cantor Measure}
%\title{Characterizations on the spectra for a Cantor measure}%{Analysis of spectra for a Cantor measure} % revisited } orthonormal bases
\author[G. T. Deng]{Guotai Deng}%${}^*$}
\address{School of Mathematics and Statistics \& Hubei Key Laboratory of Mathematical Sciences, Central China Normal University, Wuhan 430079, P. R. China.}
\email{hilltower@163.com}

\author[Y.-S. Fu]{Yan-Song Fu${}^*$}
\address{Department of  Mathematics   \\ China University of Mining and Technology (Beijing) \\ Beijing, 100083
\\P. R. China.}
\email{yansong$\_$fu@126.com;}

\author[Q. C. Kang]{Qingcan Kang}
 \address{Department of Industrial Engineering and Decision Analytics \\ The Hong Kong University of Science and Technology, Kowloon, Hong Kong SAR, P. R. China}
\email{qkangaa@connect.ust.hk}

%\thanks{ }
\def\subjclassname{2020 Mathematics Subject Classification}
\subjclass[2020]{Primary 42A10,28A80, 42A65, }

    \keywords
    {Spectral measures; Spectra; Cantor measure;  Spectral eigenvalues}
    \date{\today}

    \begin{abstract}
    In this work we investigate the question of constructions of the possible Fourier bases $E(\Lambda)=\{e^{2\pi i \lambda x}:\lambda\in\Lambda\}$ for the Hilbert space $L^2(\mu_4)$,  where $\mu_4$ is the standard middle-fourth Cantor measure and $\Lambda$ is a countable discrete set. We show that  the set
    $$\mathop \bigcap_{p\in 2\Z+1}\left\{\Lambda\subset \R: \text{$E(\Lambda)$ and $E(p\Lambda)$ are Fourier bases for   $L^2(\mu_4)$}\right\}$$ has the cardinality of the continuum. We also give other characterizations on the orthonormal set of exponential functions   being  a basis for the space $L^2(\mu_4)$ from the viewpoint of measure and dimension. Moreover, we provide a method of  constructing explicit discrete set $\Lambda$ such that $E(\Lambda)$ and its all odd scaling sets  $E(\Lambda),p\in2\Z+1,$ are still Fourier bases for $L^2(\mu_4)$.
    \end{abstract}

    \maketitle
    \def\supp{\textrm{supp}}

     \section{Introduction}
     A discrete set $\Lambda \subseteq \R^d$   is called a {\it spectrum} for  a Borel probability measure $\mu$  on $\R^d$ if the system of exponential functions $$E(\Lambda)=\{e_\lambda(x):=e^{2 \pi i \langle \lambda,  x\rangle}: \lambda\in \Lambda\}$$ forms an orthonormal basis   or {\it Fourier basis} for the Hilbert space $L^2(\mu)$, where  $\langle \cdot,  \cdot\rangle$ denotes the standard inner product of $\R^d$. Also, $\mu$ is called a {\it spectral measure} and $(\mu, \Lambda)$ forms a {\it spectral pair}. It is well-known that the Fourier-Stieltjes transformation of the measure $\mu$ plays an important role in determining the spectral property of the measure $\mu$ and it is defined by
      $$\widehat{\mu}(\xi)=\int e^{-2\pi i \langle\xi, x\rangle}d\mu(x) \qquad (\xi\in\R^d).$$
      Obviously, there are two crucial ingredients for a discrete set $\Lambda$ to be a spectrum $\mu$:

     \begin{enumerate}
       \item (Orthogonality) for any two distinct elements $\lambda,\lambda'$ in $\Lambda$, $$\langle e_\lambda, e_{\lambda'} \rangle_{L^2(\mu)}=\int e^{2\pi i (\lambda-\lambda')x}d\mu(x)=\widehat{\mu}(\lambda'-\lambda),$$
       \item  (Completeness)  if $\langle f, e_{\lambda} \rangle_{L^2(\mu)}=0$ for all $\lambda\in \Lambda$, then $f=0$ holds $\mu$-almost everywhere.
     \end{enumerate}

     The existence of a spectrum for the measure is the fundamental problem  for the harmonic analysis of $\mu$. If $\mu$ is a Lebesgue measure, it is  closely  related to the famous {\it Fuglede conjecture}\cite{Fug1974} proposed in 1974, which says that $\chi_\Omega dx$ is a spectral measure if and only if $\Omega$ is a translational tile, where $\Omega$ is a positive Lebesgue measurable  subset of $\R^d$. One can refer to \cite{Fug1974,T2004,LM2021,KM2006}  and the references therein for more advances on this famous conjecture. Moreover, the spectral measures must be of {\it pure type}\cite{LW2006,HLL2013}: either discrete, or absolutely continuous, or singular continuous. A discrete spectral measure has only finitely  many atoms, the density function of absolutely continuous  spectral measure is a constant almost everywhere on its support \cite{L2011,DL2014}. Nevertheless, the general spectral properties of singular continuous measures are much less understood, see \cite{LW2002,LW2006,DL2014,L2018,DC2021} and references therein for details.

    The first non-atomic singular continuous spectral measure  was discovered by Jorgensen and Pedersen \cite{JP} in 1998, who proved that  the middle-fourth Cantor measure $\mu_4$, satisfying the invariance equation
    \begin{equation*}%\label{add.intr.24.1}
    \mu_4(\cdot)=\dfrac12\mu_4\circ \tau_1^{-1}(\cdot)+\dfrac12\mu_4\circ \tau_2^{-1}(\cdot),
    \end{equation*}
    where $\{\tau_1(x)=4^{-1}x,  \tau_2(x)=4^{-1}(x+2)\}$, is a spectral measure with a canonical spectrum
      \begin{equation}\label{eq.intro.1}
 \Lambda_1=\left\{\sum_{k=1}^n 4^{k-1}a_k:\,a_k\in\{0,1\},n\in\N\right\},
 \end{equation}
    while the standard middle-third Cantor measure is not a spectral measure. After that, Jorgensen and Pedersen's discovery opened up a new field in developing  orthogonal harmonic analysis for fractal measures, and various new surprising singular phenomena different from that of Lebesgue measures were discovered by many researchers, one can refer to   \cite{Str2000,LW2002,Str2006,HL2008,D2012,DFY2023,DHL2019,FHL2015,AFL,AH2014,LW2002,LW2006,LLP2021,DC2021,LZWC2021,LMW2022} and the references therein. Among them, the most surprising thing is that the singular spectral measure has more than one spectrum including 0  \cite{Str2000}, and the Beurling dimension of spectra for some singular spectral measures  have the intermediate value property \cite{LW2023,LW2024}. Accordingly, if a spectral pair $(\mu, \Lambda)$ exists, as in classical Fourier analysis, Strichartz \cite{Str2000} firstly studied the convergence of Fourier series %of functions
    $$\sum_{\lambda\in\Lambda} \langle  f, e_\lambda\rangle_{L^2(\mu)}e_\lambda,$$ and the recent research proved that, for certain Cantor spectral measure $\mu$, there is uncountably many spectra such that the associated mock Fourier series of any continuous function converges uniformly to itself, and the associated mock Fourier series of any $L^1(\mu)$ function converges to itself $\mu$-almost everywhere \cite{Str2006,FTW2022}. Surprisingly,  Dutkay, Han and Sun \cite{DHS2014} (also see \cite{PA2023}) found that discrete sets $p\Lambda_1, p=17,23,29,$ written as in \eqref{eq.intro.2}, are still spectra for $\mu_4$, but there is some continuous function such that the  associated Fourier series diverges at $0$. These findings imply that the spectra for singular continuous spectral measures are more intricate than the case of Lebesgue measures, and their convergence and divergence of Fourier series also depend  on the selection of different spectrum.  So we are forced to investigate the possible spectra for a given singular spectral measure.

     However, to the best of our knowledge,  there is no singular continuous spectral measure whose spectra have been completely characterized, even for simple spectral measure $\mu_4$, see \cite{DHS2009,DHL2013,Dai2016,LW2023,LW2024} and references therein.   The main purpose of this paper is to  find  new spectra for $\mu_4$  by investigating its spectral eigen-subspace  and the tree structure.

    \subsection{Spectral eigenvalue problem of $\mu_4$}
      The  typical phenomena for the spectrum of singular spectral measures  is that it may have certain {\it scaling} property. %Actually, this phenomena was firstly observed by researchers f
      For  $\mu_4$ and  Jorgensen-Pedersen's canonical spectrum $\Lambda_1$, one can easily check that $E(p\Lambda_1)$ forms an orthogonal set for $\mu_4$ if and only if $p\in2\Z+1$, where
    \begin{equation}\label{eq.intro.2}
    p\Lambda_1=p\left\{\sum_{k=1}^n 4^{k-1}a_k:\,a_k\in\{0,1\},n\ge1\right\}.
    \end{equation}  Thus, the basic but most important thing is to check for what odd integers $p$, the scaling set $p\Lambda_1$ is still a spectrum for $\mu_4$.  Strichartz\cite[p. 218]{Str2000} showed that $5\Lambda_1$ is  a spectrum for $\mu_4$, but Dutkay \textit{et al.} \cite{DHS2009} showed that $3\Lambda_1$ is not  a spectrum for $\mu_4$. Other characterization on odd integers were given in \cite{JKS2011,Li2011,LX2016,DJ2012,DH2016,Dai2016}, etc. This leads to  the {\it spectral eigenvalue problem} of a spectral measure on $\R$ (resp. {\it spectral eigenmatrix problem} on $\R^d$), see \cite{FHW2018,FH2017,F2019,ADH2022,LW2024}, etc.
    \begin{defi}
    A real number $p$ is called a \textit{spectral eigenvalue} of spectral measure $\mu$ on $\R$ if there is a spectrum $\Lambda$ such that its scaling set $p\Lambda$ is also a spectrum for $\mu$. In this case, the spectrum $\Lambda$ is called an {\it eigen-spectrum} corresponding to the spectral eigenvalue $p$, and the set $$\{\Lambda\subseteq \R: \text{$\Lambda$ and $p\Lambda$ are both spectra for $\mu$}\}$$ is called the \textit{spectral eigen-subspace} of $\mu$ corresponds to the  spectral eigenvalue $p$.
    \end{defi}
   The study on the spectral eigenvalues problem is of interest and helps us find more surprising facts. For example, the mock Fourier series associated to the eigen-spectra corresponding to the same eigenvalue may have the same convergence property, see \cite[Section 5]{FHW2018} and \cite{FTW2022}. Other connections  between the spectral eigenvalues problem and Fourier analysis, Ergodic theory, Operator theory, Number theory and Fractal dimension theory were found by Strichartz \cite{Str2000,Str2006}, Dutkay, Han and Sun \cite{DHS2014,DHSW2011}, Dutkay and Hausserman \cite{DH2016}, Jorgensen and his coauthors \cite{Jor2012,JKS2011,JKS2012}, Li and Wu \cite{LW2023,LW2024}, etc.

   It was proved \cite[Theorem 4.3]{FHW2018} that an integer $p \in \Z$ is a spectral eigenvalue of $\mu_4$ if and only if $p\in 2\Z+1$.  In this paper we  characterize the  \textit{common spectral eigen-subspace} of $\mu_4$ corresponding to spectral eigenvalue $p\in2\Z+1$.

     \begin{theo}\label{coro1}
   The set
    $$    \mathop \cap_{p\in 2\Z+1}\{\Lambda\subset \R: \text{$\Lambda$ and $p\Lambda$ are all spectra for the measure $\mu_4$}\}$$ has the cardinality of the continuum; i.e., there exist uncountably many discrete sets $\Lambda$  such that the sets $\Lambda$ and $p\Lambda$ are all spectra for the measure $\mu_4$ for all  $p\in 2\Z+1$.
        \end{theo}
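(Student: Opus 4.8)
The plan is to reduce the assertion to a completeness question about one explicit family of digit sequences and then show that, under a growth hypothesis, completeness survives every odd dilation at once. First I would record
$$\widehat{\mu_4}(\xi)=\prod_{k=1}^{\infty}\frac{1+e^{-2\pi i\,2\cdot 4^{-k}\xi}}{2},$$
whose zero set is $\mathcal Z=\bigcup_{k\ge 1}4^{k-1}(2\Z+1)$; on $\Z\setminus\{0\}$ this is precisely the set of integers with even $2$-adic valuation. Since $E(\Lambda)$ is orthogonal iff $\Lambda-\Lambda\subseteq\mathcal Z\cup\{0\}$ and multiplication by an odd integer preserves the $2$-adic valuation, orthogonality of $E(\Lambda)$ already forces orthogonality of every $E(p\Lambda)$, $p\in 2\Z+1$. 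Thus the theorem is entirely about completeness, for which I would use the Jorgensen--Pedersen criterion: an orthonormal $E(\Lambda)$ is a basis iff $Q_\Lambda(\xi):=\sum_{\lambda\in\Lambda}|\widehat{\mu_4}(\xi+\lambda)|^2\equiv 1$.

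For a sequence $(\ell_k)$ of odd integers put
$$\Lambda\big((\ell_k)\big)=\Big\{\textstyle\sum_{k=1}^{n}4^{k-1}b_k:\ b_k\in\{0,\ell_k\},\ n\in\N\Big\}.$$
Inspecting the first index at which two digit strings differ shows every nonzero difference has even $2$-adic valuation, so $E(\Lambda((\ell_k)))$ is orthonormal for every choice of odd $\ell_k$; moreover $p\,\Lambda((\ell_k))=\Lambda((p\ell_k))$, so the dilation problem stays inside this family. It therefore suffices to find a completeness criterion for $\Lambda((\ell_k))$ that is inherited by $(p\ell_k)$ for all odd $p$. Expanding $Q_\Lambda$ through the self-similar identity $\widehat{\mu_4}(\xi)=\tfrac12\bigl(1+e^{-4\pi i 4^{-1}\xi}\bigr)\,\widehat{\mu_4}(4^{-1}\xi)$ yields a level-dependent transfer identity whose defect $1-Q_\Lambda$ is carried by bounded orbits $(x_j)_{j\ge 0}\subset\tfrac12\Z$ with $x_j=\tfrac14(x_{j-1}+a_j)$, $a_j\in\{0,\ell_j\}$. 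The criterion I would establish is: $\Lambda((\ell_k))$ is a spectrum iff every such bounded orbit is eventually $0$. \textbf{This is the main obstacle}: it is the non-autonomous analogue of the extreme-cycle/admissibility theorem, and making the ``defect is carried by trapped orbits'' step rigorous for level-dependent digit sets is the technical heart of the argument.

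Granting the criterion, suppose $\ell_k\to\infty$ and fix any odd $p$. If $(x_j)\subset\tfrac12\Z$ is a bounded orbit for $(p\ell_k)$, say $0\le x_j\le B$, choose $J$ with $p\ell_j>4B$ for all $j\ge J$. For such $j$ the choice $a_j=p\ell_j$ is impossible, since it would give $x_j\ge p\ell_j/4>B$; hence $a_j=0$ and $x_j=x_{j-1}/4$ for all $j\ge J$, forcing $x_j=x_{J-1}/4^{\,j-J+1}\to 0$ and therefore $x_j=0$ for large $j$. Thus every bounded orbit is eventually $0$, so $\Lambda((p\ell_k))$ is a spectrum, simultaneously for all $p\in 2\Z+1$. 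Note the collapse argument is uniform in $p$: no matter how large $p$ is, a sufficiently large growth rate of $(\ell_k)$ eventually dominates the bound $B$.

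Finally I would realize continuum many distinct members by a binary-indexed, super-increasing choice, e.g.\ $\ell_k^{\omega}=4^{k^2}+1+2\omega_k$ for $\omega=(\omega_k)\in\{0,1\}^{\N}$; each $\ell_k^\omega$ is odd and tends to infinity, so each $\Lambda(\omega):=\Lambda((\ell_k^\omega))$ lies in the intersection. The super-increasing growth $4^{k-1}\ell_k^\omega>\sum_{j<k}4^{j-1}\ell_j^\omega$ makes the representation $\sum_k 4^{k-1}b_k$ unique, so $\Lambda(\omega)$ determines the digit values $4^{k-1}\ell_k^\omega$ and hence recovers $\omega$; thus $\omega\mapsto\Lambda(\omega)$ is injective and the intersection has at least the cardinality of $\{0,1\}^{\N}$. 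Since there are only continuum many countable subsets of $\R$, the cardinality is exactly that of the continuum.
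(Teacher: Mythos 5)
Your reduction of the problem to completeness (orthogonality survives every odd dilation because multiplication by an odd integer preserves the $2$-adic valuation) is correct, and so is the final cardinality argument. The genuine gap is exactly the step you flag as ``the main obstacle'' and then proceed to assume: the claim that $\Lambda((\ell_k))$ is a spectrum if and only if every \emph{bounded} orbit $x_j=\tfrac14(x_{j-1}+a_j)$, $a_j\in\{0,\ell_j\}$, is eventually $0$. Every known criterion of this kind for $\mu_4$ --- Theorem \ref{lembasic} (Dutkay--Han--Sun), Corollary \ref{rem.1}, Proposition \ref{prop.2}, Theorem \ref{lemkey} --- requires the digit sequence (the label of the tree) to be \emph{bounded}, and boundedness is not a technicality: it is what forces every defect-carrying orbit of the transfer identity into a fixed compact set, where a maximum-principle/compactness argument can run. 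With your super-increasing digits $\ell_k^\omega\approx 4^{k^2}$ the defect $1-Q_\Lambda$ still propagates along orbits, but nothing rules out that it is carried along \emph{unbounded} orbits (taking $a_j=\ell_j$ infinitely often); your criterion excludes these by fiat, so it begs the question, and proving that the defect cannot escape to infinity is precisely the missing theorem. Note also that your sets $\Lambda((\ell_k^\omega))$ have counting function roughly $2^{\sqrt{\log_4 R}}$, i.e.\ Beurling dimension $0$, whereas every spectrum produced by the bounded-label machinery has Beurling dimension $1/2$; whether such sparse orthonormal sets can be complete at all is settled by nothing you invoke, so your explicit family is at best unproven and quite possibly consists of non-spectra.

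The paper avoids this issue entirely by never leaving the bounded world: for each odd $p$ it takes the two-letter digit set $C_p=\{p,3p\}\equiv\{1,3\}\pmod 4$, so every label is bounded and Theorem \ref{lembasic} applies; Theorem \ref{theomain1'} (built on the incomplete-set Lemma \ref{lempoor}) shows that the labels $\bi\in\Sigma_2^\infty$ for which $\Lambda(\pi_p(\bi))$ fails to be a spectrum form a $\tilde\mu$-null set for each fixed $p$, and the countable union of these null sets over $p\in2\Z+1$ is still null. The complement has full measure, hence cardinality of the continuum, and for every $\bi$ in it the single set $\Lambda=\Lambda(\pi_1(\bi))$ satisfies that $p\Lambda=\Lambda(\pi_p(\bi))$ is a spectrum for all odd $p$ simultaneously. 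In short: where you try to defeat all odd $p$ at once with one rapidly growing sequence (which forces unbounded digits and the unproved criterion), the paper defeats each $p$ separately on a set of labels of full measure and then intersects countably many full-measure sets; that measure-theoretic intersection is the device your approach lacks a substitute for.
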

    Theorem \ref{coro1} gives out the proof of the existence of {\it common eigen-spectra} corresponding to all spectral eigenvalues $p\in2\Z+1$. Such an explicit common eigen-spectrum is  constructed in Theorem \ref{thm.2}, which relies on an algebraic constructive method in Theorem \ref{lemkey}.

    \subsection{Tree structure of $\mu_4$}
    In  2009, Dutkay, Han and Sun \cite{DHS2009} was the first to    investigate  the construction of spectra for $\mu_4$ systematically. They found that the  exponential maximal orthogonal sets (EMOS) of $\mu_4$ has certain {\it tree} structure (e.g., see \cite[Figure 1]{DHS2009}), and established a  one-to-one correspondence between EMOS with the spectral labels of  infinite binary tree via the digit set $\{0,1,2,3\}$ (see \cite[Theorem 3.3]{DHS2009}). This idea was generalized by Dai \textit{et al} \cite{DHL2013,Dai2016} by using digit set $\{-1,0,1,2\}$ instead of digit set $\{0,1,2,3\}$. Also, \cite{DHS2009} provided us a new exotic phenomena different from Lebesgue measure: a maximal set of orthogonal exponentials may not necessarily be an orthonormal basis for $\mu_4$, so a remaining challenging problem  for $\mu_4$ is  to investigate \textit{when a (maximal) orthogonal set of exponentials will be a Fourier basis for $\mu_4$?}

    Although some sufficient or necessary conditions for maximal orthogonal set of exponentials being a spectrum were given in \cite{DHL2013,Dai2016,DHS2009,HTW}, etc, there are  other orthogonal sets,  generated by digits not necessarily $\{0,1,2,3\}$ or $\{-1,0,1,2\}$, are deserved to be studied. In this paper, we will study the completeness of the following discrete set  $E(\Lambda(\bi))$, where
    \begin{equation}\label{Lamsigma'}
    \Lambda(\bi):=\left\{\sum_{k=1}^n 4^{k-1}a_k:\,a_k\in\{0,c_{i_k}\}, c_{i_k} \in C, n\ge1 \right\},
    \end{equation}
  $C=\{c_0,\ldots,c_{m-1}\}$ is a finite set of odd integers, and  $\bi=i_1i_2\cdots$ is any element in the symbol space $\Sigma_m^\infty:=\{0,\ldots,m-1\}^\infty$.  It is easy to check that $E(\Lambda(\bi))$ is an orthogonal set of $L^2(\mu_4)$ for each $\bi\in\Sigma_m^\infty$, and the discrete set $p\Lambda_1$ in \eqref{eq.intro.2} can be obtained by taking $C=\{p\}$.

  The following is our measure-theoretic characterization on the completeness of $\Lambda(\bi)$ in the sense of measure $\tilde{\mu}$.  More precisely,  $\tilde{\mu}$ is the probability measure on $\Sigma_m^\infty$ satisfying that
    \begin{equation}\label{eq.16'}
    \tilde{\mu}([i_1\cdots i_n])=m^{-n},
    \end{equation}
    where $[i_1\cdots i_n]$ is a {\it cylinder set} of $\Sigma_m^\infty$ for $m \geq 2$ (see Section \ref{eq.subsection.sym} for details).

        \begin{theo}\label{theomain1}
    With notations above, if  $m \geq 2$ and the digit set $C:=\{c_0,\ldots,c_{m-1}\}$ satisfies  $C\equiv\{1,3\}\ppmod 4$, then the set
    \begin{equation}\label{eq.intro.3}
    \cN :=\{\bi\in\Sigma_m^\infty:\, \Lambda(\bi) \text{ is not a spectrum for the measure }  \mu_4\}
    \end{equation}
   has $\tilde{\mu}$ null measure, i.e.,  $\tilde{\mu}(\cN )=0.$
        \end{theo}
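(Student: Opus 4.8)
The plan is to run the Jorgensen--Pedersen completeness criterion through the self-similar transfer relation of $\mu_4$, reduce non-completeness to the existence of a ``trapped'' integer orbit, and then annihilate such orbits for $\tilde\mu$-almost every $\bi$ by a Borel--Cantelli argument that is precisely where the hypothesis $C\equiv\{1,3\}\ppmod 4$ enters. First I record that each triple $(4,\{0,2\},\{0,c_{i_k}\})$ is Hadamard (because every $c_{i_k}$ is odd), so $E(\Lambda(\bi))$ is orthonormal for all $\bi$; by Jorgensen--Pedersen, $\Lambda(\bi)$ is a spectrum if and only if $Q_{\bi}(\xi):=\sum_{\lambda\in\Lambda(\bi)}|\widehat{\mu_4}(\xi+\lambda)|^2\equiv 1$, where in general $0\le Q_{\bi}\le 1$ and $Q_{\bi}(0)=1$. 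Writing $\Lambda(\bi)=\{0,c_{i_1}\}+4\Lambda(\sigma\bi)$ and using $\widehat{\mu_4}(\xi)=m_B(\xi/4)\widehat{\mu_4}(\xi/4)$ with $|m_B(\xi)|^2=\cos^2(2\pi\xi)$ (and $\Lambda(\sigma\bi)\subset\Z$, a period of $m_B$), one obtains the averaging recursion
\[
Q_{\bi}(\xi)=\cos^2\!\tfrac{\pi\xi}{2}\;Q_{\sigma\bi}\!\big(\tfrac{\xi}{4}\big)+\sin^2\!\tfrac{\pi\xi}{2}\;Q_{\sigma\bi}\!\big(\tfrac{\xi+c_{i_1}}{4}\big),
\]
the two weights summing to $1$ since $c_{i_1}$ is odd. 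Thus $\cN=\{\bi:\,Q_{\bi}\not\equiv 1\}$.

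The crux is a dynamical reduction. The key observation is that at an \emph{integer} $\xi$ the weights collapse to $\{0,1\}$: for $\xi$ even the recursion reads $Q_{\bi}(\xi)=Q_{\sigma\bi}(\xi/4)$, and for $\xi$ odd it reads $Q_{\bi}(\xi)=Q_{\sigma\bi}((\xi+c_{i_1})/4)$. I would then prove the non-autonomous analogue of the extreme-cycle characterization (in the spirit of \cite{DHS2009,Dai2016}): $Q_{\bi}\not\equiv 1$ if and only if there is a nonzero integer whose forward orbit under the deterministic integer map (even $\xi\mapsto\xi/4$, odd $\xi\mapsto(\xi+c_{i_1})/4$, with the shift $\bi\mapsto\sigma\bi$) remains in $\Z$ for all time and is not eventually $0$ --- a \emph{trapped orbit}. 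One direction follows by iterating the collapsed recursion down to $0$ (where $Q=1$) whenever the orbit leaves $\Z$ or reaches $0$; the trapped case genuinely supports $Q_{\bi}<1$, exactly as in the model computation $c\equiv 3$, where $\xi=1$ is fixed and $\widehat{\mu_4}(\lambda+1)=0$ for every $\lambda$ forces $Q_{\bi}(1)=0$.

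\textbf{The main obstacle} is the converse of this equivalence: showing that $\inf_\xi Q_{\bi}<1$ forces a trapped \emph{integer} orbit rather than some minimizing set living off the lattice. The difficulty is that at a half-odd-integer both branches carry weight $\tfrac12$, so the minimum set of $Q_{\bi}$ propagates to points with denominators $2\cdot 4^{k}\to\infty$, which never return to $\Z$. I would resolve this by a compactness/invariance analysis: the contractions $\xi\mapsto(\xi+\ell)/4$ map everything into the attractor $[0,c_{\max}/3]$, so $m=\inf Q_{\bi}$ is attained on a compact forward-invariant min-set $S$ with $0\notin S$ if $m<1$; using that the only point with $|\widehat{\mu_4}|=1$ is $0$, one shows a full-mask ($|m_B|=1$) orbit can survive inside $\tfrac12\Z$ only along integers, so $S$ must contain a nonzero integer whose orbit is trapped. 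This is the technical heart and the place I expect the most bookkeeping.

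With the reduction in hand, the measure estimate is clean and is where $C\equiv\{1,3\}\ppmod 4$ is used. A trapped orbit is bounded, hence confined to the finite set $\Z\cap[-c_{\max}/3,\,c_{\max}/3]$, so there are only finitely many candidate starting integers and it suffices to show each yields a trapped orbit only on a $\tilde\mu$-null set. Repeated halving forbids a nonzero trapped orbit from being eventually even (that would drive it to $0$), so it visits odd integers at infinitely many times $k_1<k_2<\cdots$. To stay in $\Z$ at an odd $\xi_{k}$, the next symbol must satisfy $c_{i_{k+1}}\equiv-\xi_{k}\ppmod 4$; since $-\xi_k\in\{1,3\}\ppmod 4$ and $C$ realizes \emph{both} residues $1$ and $3$ modulo $4$, the complementary residue is carried by some symbol, and such a symbol forces $(\xi_k+c_{i_{k+1}})/4$ off $\Z$. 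As $i_{k+1}$ is independent of $\F_{k}=\sigma(i_1,\dots,i_k)$, the conditional probability of surviving each odd step is at most $1-\tfrac1m<1$; surviving the first $n$ odd times then has probability $\le(1-\tfrac1m)^{n}\to 0$. Hence the trapped-orbit event is $\tilde\mu$-null for each candidate integer, and a finite union gives $\tilde\mu(\cN)=0$.
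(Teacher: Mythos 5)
Your setup (orthogonality via the Hadamard property, the Jorgensen--Pedersen criterion, and the averaging recursion for $Q_{\bi}$) is correct, and the second half of your argument is correct as well: it is, in probabilistic dress, exactly the paper's proof. Your ``trapped integer orbit'' is (up to a sign convention) an integer possessing an infinite quasi $4$-based expansion in the sense of Definition \ref{defn.1}, i.e.\ an element of $\Lambda_I(\bi)\setminus\Lambda(\bi)$, and your estimate that each visit to an odd integer is survived with conditional probability at most $1-\frac1m$ --- which is where $C\equiv\{1,3\}\ppmod 4$ enters, since both odd residues occur in $C$ --- is the same geometric-decay computation as the paper's incomplete-set lemma (Lemma \ref{lempoor}) and its use in Theorem \ref{theo13}. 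Two small remarks there: the times of odd visits are random, so the bound $\bigl(1-\frac1m\bigr)^{n}$ should be justified by conditioning at the $n$-th odd visit, exactly as the nested-cylinder estimate in Lemma \ref{lempoor} does; and the boundedness reduction is unnecessary, since $\cN$ is a countable union over $\lambda\in\Z$ of trapped-orbit events, so countable additivity already suffices.

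The genuine gap is the step you yourself single out as the technical heart: that $\Lambda(\bi)$ failing to be a spectrum (equivalently $Q_{\bi}\not\equiv1$) forces a trapped \emph{integer} orbit. This implication is precisely \cite[Theorem 4.4]{DHS2009} --- Theorem \ref{lembasic} and Corollary \ref{rem.1} in the paper --- which the paper invokes as a known result rather than reproving, and your plan to rederive it does not go through as sketched. Concretely: (a) your ``easy direction'' fails as stated, because once an integer orbit leaves $\Z$ the two weights no longer collapse to $\{0,1\}$, so the collapsed recursion cannot be ``iterated down to $0$''; moreover, even $Q_{\bi}\equiv1$ on $\Z$ would not give $Q_{\bi}\equiv1$ on $\R$, since $\Z$ has no accumulation point and analyticity of $Q_{\bi}$ yields nothing there. (b) The min-set argument is not available in the stated form because the dynamics is non-autonomous: $Q_{\sigma^{n}\bi}$ is a different function at every step, so there is no single compact forward-invariant set $S$; making this rigorous requires compactness in the symbol variable and continuity of $\bj\mapsto Q_{\bj}$, neither of which you address. (c) Your proposed resolution of the off-lattice obstruction (the remark about full-mask orbits in $\tfrac12\Z$) does not meet the difficulty you yourself identified, namely minimizing points with denominators $2\cdot4^{k}$ that never lie in $\tfrac12\Z$. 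The repair is simple: instead of reproving the reduction, cite \cite[Theorem 4.4]{DHS2009} to conclude that $\Lambda_I(\bi)$ is a spectrum, hence $\Lambda(\bi)$ is a spectrum if and only if $\Lambda(\bi)=\Lambda_I(\bi)$, i.e.\ if and only if no nonzero integer has a trapped orbit; with that substitution your argument closes and then coincides in substance with the paper's proof.
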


    In order to keep a higher level of generality, we will obtain Theorem \ref{theomain1} as a special case of a more general result  Theorem  \ref{theo13}.  Moreover, as a refinement of Theorem  \ref{theo13}, a dichotomy law for dimensional  characterization which is formulated as  Theorem \ref{theodim-1}  (also see Theorem \ref{theodim}).

    \begin{theo}[0-1 law]\label{theodim-1}
    If  $m \geq 2$ and the digit set $C:=\{c_0,\ldots,c_{m-1}\}$ satisfies  $C\equiv\{1,3\}\ppmod 4$, then the set $$\cN(C) :=\{L \in C^\infty:\, \Lambda(L) \text{ as in \eqref{eq.3} is not a spectrum for the measure }  \mu_4\}$$
    is either an empty set or an infinite set with full Hausdorff dimension.
        \end{theo}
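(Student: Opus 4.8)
The plan is to deduce the dichotomy from the measure-zero statement of Theorem \ref{theomain1}/\ref{theo13} together with the self-similar structure of the problem. The starting point is the Jorgensen--Pedersen completeness criterion: since $E(\Lambda(L))$ is automatically orthonormal, $\Lambda(L)$ is a spectrum for $\mu_4$ if and only if the function $Q_L(\xi):=\sum_{\lambda\in\Lambda(L)}|\widehat{\mu_4}(\xi+\lambda)|^2$ is identically $1$. Writing $\Lambda(L)=\{0,L_1\}+4\Lambda(\sigma L)$ (where $\sigma$ is the shift on $C^\infty$) and using $\widehat{\mu_4}(\xi)=\tfrac12(1+e^{-\pi i\xi})\widehat{\mu_4}(\xi/4)$, one obtains the renewal identity $Q_L(\xi)=\cos^2(\tfrac{\pi\xi}{2})\,Q_{\sigma L}(\tfrac{\xi}{4})+\sin^2(\tfrac{\pi\xi}{2})\,Q_{\sigma L}(\tfrac{\xi+L_1}{4})$. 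Because each $Q\le 1$, the two weights sum to $1$, and each $Q_L$ is continuous, this identity forces $Q_L\equiv 1\iff Q_{\sigma L}\equiv 1$. Hence membership in $\cN(C)$ is invariant under $\sigma$, so $\cN(C)$ is a \emph{tail set}; this is the structural fact that drives the $0$-$1$ alternative.

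The infinitude half is then immediate: if $L^*\in\cN(C)$, then by shift-invariance every sequence eventually equal to $L^*$, and in particular $wL^*$ for each finite word $w\in C^{*}$, again lies in $\cN(C)$, so $\cN(C)$ is infinite whenever it is nonempty. For the dimension half I would use the symbolic description of $\cN(C)$ furnished by Theorem \ref{theo13}/\ref{theodim}: $L\in\cN(C)$ precisely when $L$ carries an infinite \emph{admissible bad branch}, a sequence of frequencies $\xi_0,\xi_1,\dots$ with $\xi_j=(\xi_{j-1}+\varepsilon_j L_j)/4$ remaining in the bounded non-completeness region, where at each step the kick admissible for keeping $Q<1$ is governed by the residue of the current state modulo $4$. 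The hypothesis $C\equiv\{1,3\}\ppmod 4$ is exactly what guarantees that both odd residues are available in $C$, so whenever a kick of a prescribed residue is needed to prolong the branch, at least one digit of $C$ can supply it.

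Assuming $\cN(C)\neq\emptyset$, I would encode the admissible continuations of a bad branch as a graph-directed system on the finite state set of admissible integers in the bounded region. Along a ``branch-$0$'' step the corresponding coordinate of $L$ is \emph{unused}, hence completely free (all $m$ symbols allowed), while along a ``kick'' step the coordinate is pinned to a digit of the required residue. I would extract a subfamily $\mathcal F\subseteq\cN(C)$ by iterating this graph along an infinite admissible branch and place on $\mathcal F$ a Bernoulli-type measure $\nu$ that spreads mass uniformly over all admissible choices at every level. A mass-distribution (Frostman) estimate of the form $\nu([L_1\cdots L_n])\le m^{-n(1-o(1))}$ would then give $\dim_H\mathcal F=\dim_H C^\infty$, i.e.\ full Hausdorff dimension; combined with the tail-set observation this establishes the empty-or-full dichotomy.

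The main obstacle is precisely the Frostman lower bound for $\dim_H\mathcal F$. One must show that the graph-directed system attached to $\cN(C)$ is, when nonempty, rich enough that its attractor is dimension-full rather than merely positive-dimensional: the pinned (kick) steps must contribute asymptotically the same per-symbol growth as the free steps, so that the residue constraints do not lower the pressure below $\log m$. This is exactly where the arithmetic $C\equiv\{1,3\}\ppmod 4$ and the precise state-graph of Theorem \ref{theo13} must be used together, since one needs not only the residue but the exact pinned value to lie in $C$ while still allowing the measure $\nu$ to spread near-uniformly. Controlling the interaction between these exact-value constraints and the uniform spreading of $\nu$ is the technical heart; the measure-zero conclusion of Theorem \ref{theomain1} shows this free/pinned balance carries no $\tilde\mu$-mass, and the present theorem asserts that, once nonempty, the very same balance is nevertheless dimension-full.
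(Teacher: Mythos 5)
Your proposal does not prove the theorem: the half that carries all the content, namely that a nonempty $\cN(C)$ has full Hausdorff dimension, is only described as a plan. You yourself identify the Frostman lower bound for your graph-directed family $\mathcal{F}$ as ``the main obstacle'' and ``the technical heart'', and you never supply it; showing that the pinned steps do not drop the per-symbol growth below $\log m$ is precisely the assertion to be proved, so what remains is a restatement of the problem rather than an argument. The structural reduction feeding into that plan is also incorrect for this theorem. For the general labeled tree of \eqref{eq.3}, the left and right subtrees of $(\cT,L)$ carry the two decimated subsequences $a_{2,1}a_{3,1}a_{3,3}\cdots$ and $a_{2,3}a_{3,5}a_{3,7}\cdots$ of $L$, not the shift $\sigma L$; your renewal identity $Q_L(\xi)=\cos^2(\tfrac{\pi\xi}{2})\,Q_{\sigma L}(\tfrac{\xi}{4})+\sin^2(\tfrac{\pi\xi}{2})\,Q_{\sigma L}(\tfrac{\xi+L_1}{4})$ is valid only for the special labels $A$ of Subsection \ref{subsec.2.2}, i.e.\ for the setting of Theorem \ref{theodim'}, not for Theorem \ref{theodim-1}. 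Moreover, even if $\cN(C)$ were a tail set, that cannot ``drive the $0$-$1$ alternative'' for dimension: the set of sequences eventually equal to $c_0c_0c_0\cdots$ is a tail set of Hausdorff dimension zero, so tail structure alone yields no empty-or-full-dimension dichotomy.

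The paper's proof shows that the key point you missed is elementary and needs none of this machinery. If $\cN(C)\neq\emptyset$, pick $L\in\cN(C)$; by Corollary \ref{rem.1} some integer $\lambda$ has an infinite quasi $4$-based expansion $a_{1,k_1}a_{2,k_2}\cdots$ with respect to $(\cT,L)$. This expansion runs along a single path of the binary tree, hence it pins at most one label coordinate per level, i.e.\ at most $n$ of the first $2^n-1$ coordinates of $L$ --- a density-zero set of coordinates. The set $\widetilde{\cN}_\lambda(C)$ of all labels agreeing with $L$ at exactly those pinned coordinates is contained in $\cN(C)$: the congruences \eqref{eqlambda} involve only the pinned entries, so the same $\lambda$ remains a witness for every such label, and by uniqueness of quasi expansions (Remark \ref{rem.2}) its expansion stays infinite, giving $\lambda\in\Lambda_I(L')\setminus\Lambda(L')$ for each $L'\in\widetilde{\cN}_\lambda(C)$. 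Since fixing a density-zero set of coordinates in a symbolic space does not lower Hausdorff dimension, $\widetilde{\cN}_\lambda(C)$, and therefore $\cN(C)$, has full dimension; infinitude is then immediate. In short, one witness integer constrains exponentially few coordinates of the label, and that single observation replaces your entire graph-directed/pressure program.
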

        Other   characterizations similar to Theorem \ref{theodim-1} is  given in Theorem \ref{theodim'}.

    This paper is organized as follows. In Section 2, we briefly recall some basic facts about $\mu_4$. In Section \ref{sec.measure},  various characterizations of  spectra for $\mu_4$ are provided, including the proofs of Theorems \ref{theomain1},  \ref{theodim-1} and \ref{coro1}.  In Section \ref{sec.eigenvalue}, we will provide an algebraic method to construct all common eigen-spectrum of $\mu_4$ corresponding to all $p\in 2\Z+1$, see Theorems \ref{lemkey} and \ref{thm.2}.

    \section{Preliminaries and basic facts}
    This section aims to briefly collect some notations and basic facts about $\mu_4$, which includes symbolic space, (quasi) 4-based expansions of integers, binary tree and labeled tree. Most of them are modified of  those in Sections 2 and 4 of \cite{DHS2009}, and they
    provide us precise notions to state and prove our main results in the sequel.
    \subsection{Symbolic space}\label{eq.subsection.sym}
    Let $m\ge2$ be an integer , we define $\Sigma_m:=\{0,1,\ldots,m-1\}$ and
       $$\Sigma_m^n:=\{i_1i_2\cdots i_n:\,i_j\in\Sigma_m,1\le j\le n\}~~~~{\rm for}~ n\ge1.$$
    When $n=0$, we set $\Sigma_m^0=\{\emptyset\}$ where $\emptyset$ is the empty word. Each element $\bi\in\Sigma_m^n$ is called a word with length $n$, whose length is also denoted by $|\bi|$, i.e., $|\bi|=n$. Denote
        $$\Sigma_m^*=\underset{0 \le  n<\infty}{\bigcup}\Sigma_m^n,$$
    and
        $$\Sigma_m^\infty:=\{i_1i_2\cdots:\,i_n\in \Sigma_m,n\ge1\}.$$
    Each element $\bi\in\Sigma_m^\infty$ is called an infinite \textit{word} or an infinite \textit{path}. In this case, the length of $\bi$ is said to be infinite, i.e., $|\bi|=\infty$.

    If $\bi\in\Sigma_m^*$ and $\bj\in\Sigma_m^*\cup\Sigma_m^\infty$, then $\bi*\bj$ denotes the concatenation of $\bi$ and $\bj$. For $n\ge1$ and $\bi=i_1\cdots i_n\in\Sigma_m^n$, define
        $$\bi^k=\mathop{\underbrace{\bi*\bi*\cdots*\bi}}\limits_{k\text{ times}}$$
    and $\bi^\infty=\bi*\bi*\cdots\in\Sigma_m^\infty$.
    For each $\bi=i_1\cdots i_{|\bi|}\in\Sigma_m^* \cup \Sigma_m^\infty$ and  $k \leq |\bi|$, the $k$th {\it node} of $\bi$ is defined by  $\bi|_k:=i_1\cdots i_k$, and we write
    $\bi^-:=i_1i_2\cdots  i_{|\bi|-1}$.

    If $\bi\in\Sigma_m^*$, the {\it cylinder set }  $[\bi]$ of the {\it symbolic space} $\Sigma_m^\infty$ is defined by
    $$[\bi]:=\{\bj\in\Sigma_m^\infty:\, \bj|_{|\bi|}=\bi\}.$$
    It is well-known that the cylinder set $[\bi]$ is both open and closed,
    and the   set $\{[\bi]: \bi \in \Sigma_m^*\}$ generates the Borel $\sigma$-algebra on $\Sigma_m^\infty$.
    It follows from  Carath\'{e}odory-Kolmogorov's extension theorem (see, e.g.,
     \cite[Theorem 1.14]{Foll1998})  that there is a unique Borel probability measure $\tilde\mu$ on the symbolic
    space  $\Sigma_m^\infty$ satisfying that
        \begin{equation}\label{eq.16}
            \tilde\mu([\bi])=m^{-|\bi|}\quad \text{for each $\bi\in\Sigma_m^*$}.
        \end{equation}
    In addition, if $I$ is a subset of $\Sigma_m^*$, we will use the symbol
        \begin{equation}\label{eq.21}
            [I]=\bigcup_{\bi\in I}[\bi]
        \end{equation}
    to  denote the union of cylinder sets in $\Sigma_m^\infty$.

    \subsection{ 4-based expansion of integers}
    For any integer $\lambda\in\Z$, there is a unique infinite word $\bi=i_1i_2\cdots\in\Sigma_4^\infty$ such that
        $$ \lambda=i_1+4i_2+4^2i_3+\cdots +4^{n-1}i_n+\cdots.$$
    As usual, $\bi$ is called the {\it  $4-$based expansion} of the integer $\lambda$.

    The following Proposition \ref{lembase4} is borrowed from \cite[Proposition 2.2]{DHS2009}.
    \begin{prop}\label{lembase4}
        An infinite word $\bi=i_1i_2\cdots\in\Sigma_4^\infty$ is the $4-$based expansion for some integer $\lambda$ if and only if $\bi$ ends in $0^\infty$ or in $3^\infty$. More precisely, if $\lambda\ge0$, then there exists a positive integer $N$ such that $i_n=0$ for all $n>N$, that is,
            $$\lambda=\sum_{n=1}^{N} 4^{n-1} i_n.$$
        If $\lambda<0$, then there exists a positive integer $N$ such that $i_n=3$ for all $n>N$, that is,
            $$\lambda=\sum_{n=1}^N4^{n-1}i_n-4^N.$$
    \end{prop}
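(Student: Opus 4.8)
The plan is to treat the nonnegative and negative integers separately, reducing the negative case to the nonnegative one by an additive shift, and to base the whole argument on a stabilization analysis of the truncated sums
$$S_N:=\sum_{n=1}^N 4^{n-1}i_n\qquad(N\ge0,\ S_0:=0).$$
The two admissible tails $0^\infty$ and $3^\infty$ will appear as the two distinct ways of forcing an associated value sequence to become eventually constant, and the explicit formulas in the statement will drop out of this analysis.

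For $\lambda\ge0$ I would first invoke the ordinary base-$4$ expansion: the division algorithm applied repeatedly to $\lambda$ produces unique digits $i_1,\dots,i_N\in\{0,1,2,3\}$ (with $i_N\ne0$ when $\lambda>0$) such that $\lambda=S_N=\sum_{n=1}^N4^{n-1}i_n$. Setting $i_n=0$ for $n>N$ gives a word ending in $0^\infty$, and the uniqueness of the base-$4$ representation of a nonnegative integer yields the uniqueness of this word among those with tail $0^\infty$. This is the routine half.

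The substantive part is $\lambda<0$, where the literal series diverges and the tail $3^\infty$ must be interpreted through a correction term. Here I would choose $N$ large enough that $0\le\lambda+4^N<4^N$ and write the $N$-digit base-$4$ expansion $\lambda+4^N=\sum_{n=1}^N4^{n-1}i_n$, so that $\lambda=\sum_{n=1}^N4^{n-1}i_n-4^N$, which is exactly the claimed formula. The key point, and the main obstacle, is to show that this extends to a well-defined infinite word independent of the choice of $N$: replacing $N$ by $N+1$ changes the target from $\lambda+4^N$ to $\lambda+4^{N+1}=(\lambda+4^N)+3\cdot4^N$, and since $\lambda+4^N<4^N$ this simply appends the digit $i_{N+1}=3$ without disturbing $i_1,\dots,i_N$. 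Iterating, every admissible $N$ yields the same word $i_1\cdots i_N3^\infty$, which establishes existence, the $3^\infty$ tail, and (via uniqueness of the finite base-$4$ expansion of $\lambda+4^N$) uniqueness.

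Finally, for the converse I would argue that no other tail can occur by tracking how the two candidate values evolve. Writing $V_N^{(0)}:=S_N$ and $V_N^{(3)}:=S_N-4^N$, a direct computation gives $V_{N+1}^{(0)}-V_N^{(0)}=4^N i_{N+1}\ge0$ and $V_{N+1}^{(3)}-V_N^{(3)}=4^N(i_{N+1}-3)\le0$, each vanishing precisely when $i_{N+1}=0$, respectively $i_{N+1}=3$. Hence $V_N^{(0)}$ stabilizes iff $\bi$ ends in $0^\infty$ and $V_N^{(3)}$ stabilizes iff $\bi$ ends in $3^\infty$; if $\bi$ has neither tail, both sequences change infinitely often and $\bi$ represents no integer under either interpretation. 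Since the stabilized value is $\ge0$ on the first class and, because $S_N\le4^N-1$, is $\le-1$ on the second, the two classes are disjoint and, by the first two steps, biject with the nonnegative and negative integers respectively, giving the full characterization together with uniqueness.
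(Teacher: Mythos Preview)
Your argument is correct. The stabilization device with $V_N^{(0)}=S_N$ and $V_N^{(3)}=S_N-4^N$ cleanly handles both directions: monotonicity forces each to stabilize precisely under the corresponding tail hypothesis, and the bound $0\le S_N\le 4^N-1$ separates the two ranges, so the bijection with $\Z_{\ge0}\cup\Z_{<0}$ follows. The reduction of the negative case to the nonnegative one via $\lambda+4^N$ is exactly the right move, and your consistency check that passing from $N$ to $N+1$ appends the digit $3$ is what makes the infinite word well defined.

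There is, however, nothing to compare your approach against: the paper does not prove this proposition at all. It is quoted verbatim as \cite[Proposition~2.2]{DHS2009} and used as a black box. So your write-up supplies a self-contained proof where the paper simply imports the result. If anything, your treatment is slightly more thorough than what the cited source requires, since you also verify uniqueness and the disjointness of the two tail classes, which the paper's statement leaves implicit in the phrase ``the $4$-based expansion.''
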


  By Proposition \ref{lembase4}, one can easily get the following result.
    \begin{prop}\label{prop.1}
        Let $\bi=i_1i_2\cdots\in\Sigma_4^\infty$ be the $4-$based expansion of an integer $\lambda\in\Z$. Then the $n$th component $i_n$ of $\bi$ can be expressed by
        \begin{equation}\label{eqind}
            i_n=\left[\dfrac{\lambda}{4^{n-1}}\right] -4\times\left[\dfrac{\lambda}{4^n}\right], \quad   (n \ge 1),
        \end{equation}
        where $[x]$ denotes the largest integer that is no greater than $x$. Consequently, the numerator $\lambda$ in \eqref{eqind} could be replaced by any integer $\lambda'$ with $\lambda\equiv \lambda'\ppmod{4^n}$.
    \end{prop}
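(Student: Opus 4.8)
The statement to prove is Proposition \ref{prop.1}, which gives an explicit formula for the $n$th digit in the $4$-based expansion of an integer.

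\medskip

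The plan is to establish the digit extraction formula \eqref{eqind} directly from the structure of the $4$-based expansion described in Proposition \ref{lembase4}, handling the nonnegative and negative cases in a unified way by exploiting the periodicity that the final assertion about congruence modulo $4^n$ already hints at. First I would treat the case $\lambda \ge 0$. By Proposition \ref{lembase4} we may write $\lambda = \sum_{k=1}^{N} 4^{k-1} i_k$ with $i_k \in \{0,1,2,3\}$ and $i_k = 0$ for $k > N$. The key observation is that dividing by $4^{n-1}$ and taking the floor annihilates all the lower-order terms: for $k < n$ the contribution $4^{k-1} i_k < 4^{n-1}$ summed over $k$ stays below $4^{n-1}$ (since $\sum_{k=1}^{n-1} 4^{k-1}\cdot 3 = 4^{n-1}-1$), so $\lfloor \lambda / 4^{n-1}\rfloor = \sum_{k \ge n} 4^{k-n} i_k = i_n + 4 i_{n+1} + 4^2 i_{n+2} + \cdots$. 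Applying the same identity with $n$ replaced by $n+1$ gives $\lfloor \lambda/4^{n}\rfloor = i_{n+1} + 4 i_{n+2} + \cdots$, and subtracting four times the latter from the former leaves exactly $i_n$, which is \eqref{eqind}.

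\medskip

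Next I would handle $\lambda < 0$. Here Proposition \ref{lembase4} gives $\lambda = \sum_{k=1}^{N} 4^{k-1} i_k - 4^N$ with $i_k = 3$ for $k > N$. Rather than redo the floor estimates with the extra $-4^N$ term, I would argue that the formula \eqref{eqind} is translation-invariant modulo $4^n$ in a way that reduces the negative case to the nonnegative one. Concretely, adding a suitable multiple $4^n t$ of $4^n$ to $\lambda$ shifts both floors $\lfloor \lambda/4^{n-1}\rfloor$ and $\lfloor \lambda/4^{n}\rfloor$ by $4t$ and $t$ respectively, so the combination $\lfloor \lambda/4^{n-1}\rfloor - 4\lfloor \lambda/4^{n}\rfloor$ is unchanged; meanwhile the first $n$ digits of the $4$-based expansion depend only on $\lambda \bmod 4^n$. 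Choosing $t$ large enough to make $\lambda + 4^n t \ge 0$ then lets me invoke the already-proved nonnegative case. This simultaneously yields the final ``consequently'' assertion of the proposition, since it shows both sides of \eqref{eqind} depend on $\lambda$ only through its residue modulo $4^n$.

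\medskip

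The only point requiring care — and the place where I expect the minor obstacle to sit — is the claim that the first $n$ digits $i_1,\dots,i_n$ of the $4$-based expansion are determined by $\lambda \bmod 4^n$; for nonnegative integers this is the standard uniqueness of base-$4$ representation, but for the expansions ending in $3^\infty$ one must verify that the ``tail'' convention is consistent with the periodic shift, i.e., that replacing $\lambda$ by $\lambda + 4^n$ genuinely preserves $i_1, \dots, i_n$ and does not disturb digits below position $n$. This follows from the uniqueness statement in Proposition \ref{lembase4} together with the fact that the ambiguous representations are precisely the two tails $0^\infty$ and $3^\infty$, so no carrying propagates into the first $n$ positions. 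Once that consistency is pinned down, the whole proposition reduces to the one clean floor computation carried out in the nonnegative case.
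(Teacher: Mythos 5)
Your proof is correct, and it follows the same route the paper intends: the paper gives no written proof at all (it simply states the proposition as an easy consequence of Proposition \ref{lembase4}), and your argument is precisely the elaboration of that claim — the floor computation for $\lambda\ge 0$, then the reduction of the negative case via the invariance of $\left[\lambda/4^{n-1}\right]-4\left[\lambda/4^{n}\right]$ under $\lambda\mapsto\lambda+4^nt$. The one step you flag as delicate (that $i_1,\dots,i_n$ depend only on $\lambda\bmod 4^n$) is indeed the crux, and it is pinned down cleanly by noting that Proposition \ref{lembase4} gives $\sum_{k=1}^{n}4^{k-1}i_k\equiv\lambda\pmod{4^n}$ in both cases (for $\lambda<0$ the tail $3^\infty$ contributes $4^n-4^N$ to the partial sum), so the partial sum is the unique representative of $\lambda\bmod 4^n$ in $[0,4^n)$.
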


    \subsection{Labeled tree}\label{subsec.Labeled tree}
    In this subsection, the concepts of {\it labeled tree $(\cT, L)$} and {\it quasi  4-based expansion} with respect to the labeled tree $(\cT, L)$  are obtained by making some suitable modifications to those of Section 4 of Dutkay \textit{et al} \cite{DHS2009}. %In fact, they play a significant role in the construction of orthogonal sets of $\mu_4$.
    \begin{defi}[\textbf{Binary tree}]\label{defn.2}
        Let $\cT$ be the \textit{complete infinite binary tree} (or {\it binary tree} for short), in which the vertex set is $\cV=\Sigma_2^*$ and the edge set is
          $\cE=\{(\bi, \bi*j):\,\bi\in\Sigma_2^*,j=0,1\}$.
        For each $\bi\in\Sigma_2^*$, we call $(\bi,\bi*j)$ the \textit{left edge} (resp. \textit{right edge}) if $j=0$ (resp. $j=1$).
    \end{defi}
    For $n\ge1$, let $E_n$ be the set of all $n$th level edges, that is,
        \begin{equation}\label{eq.15}
            E_{n}=\left\{(\bi,\bi*j):\, \bi\in\Sigma_2^{n-1},j=0,1\right\}.
        \end{equation}
    We label the edges in $E_n~ (n\ge1)$  from left to right by $a_{n,0},a_{n,1},\ldots, a_{n,2^{n}-1},$ where
    \begin{eqnarray}\label{eq.1}
    \begin{cases}
     a_{n,k_n}=0,  &     \text{if }  k_n \in \{0,2,\ldots, 2^{n}-2 \}, \\
    a_{n,k_n} \in 2\Z+1, & \text{if }   k_n \in \{ 1,3,\ldots, 2^{n}-1\}.
    \end{cases}
    \end{eqnarray}

 Next, if we read all the nonzero elements on the edges from the top to the bottom on the tree $\cT$,  from left to the right on the same level, and we will get an infinite sequence
    \begin{equation}\label{eqL}
        L= a_{1,1}a_{2,1}a_{2,3}a_{3,1}a_{3,3}a_{3,5}a_{3,7} \cdots.
    \end{equation}
  For simplicity,     we will call  $L$  a \textit{label} of the tree $\cT$ and the pair $(\cT, L)$ a \textit{labeled tree} (see Figure \ref{fig3} for the first three  levels).
    In particular, the label $L$ is said to be {\it bounded} if there is $M>0$ such that $|a_{n,i}|\leq M$ holds for all $n \in \N$ and for all odd integers $1\leq i\leq 2^n-1$.

   \newcounter{mylabel}\setcounter{mylabel}{1}
    \newcounter{mysublabel}\setcounter{mysublabel}{0}
    \newcommand{\steplab}[1][mysublabel]
        {\arabic{#1}\stepcounter{#1}}
    \newcommand{\mylabel}[3][1]{%
        \ifthenelse{#1=0}{\setcounter{mysublabel}{0}\stepcounter{mylabel}}{}
        \ifthenelse{\same=0}{
      \draw[fill=black](#2)circle[radius=.05] --node[left]{$a_{\arabic{mylabel}\steplab}$}++(#3,1) circle[radius=.05] --node[right]{$a_{\arabic{mylabel}\steplab}$}++({#3},-1) circle[radius=.05];}
        {%\addtocounter{mylabel}{-1}
        \draw[fill=black](#2)circle[radius=.05] --node[left]{0}++(#3,1)circle[radius=.05] --node[right]{$c_{i_{\arabic{mylabel}}}$}++({#3},-1) circle[radius=.05];
        }
        }

    \begin{figure}[htb]
    \begin{tikzpicture}[xscale=.8]
        \pgfmathsetmacro{\h}{2}
        \pgfmathsetmacro{\y}{.18}
        \pgfmathsetmacro{\v}{-.18}
        \draw(-2*\h,-1)node{$0$}++(0,\y)--node[above]{$0$}++(2*\h-\y,1-\y) ++(\y,\y)node[yshift=-4pt]{$\emptyset$} ++(\y,-\y)--node[above]{$a_{1,1}$}++(2*\h-\y,\y-1)++(0,-\y)node{$1$};
        \foreach \i/\tex in {0/1,1/3}
            \draw(4*\i*\h-3*\h,\v-2)node{$\i0$}++(0,\y)--node[above,near start]{$0$}++(\h-\y,1-\y)++(2*\y,0)--node[above,near end]{$a_{2,\tex}$}++(\h-\y,\y-1) ++(0,-\y)node{$\i1$};
        \foreach \i/\tex in {1/{00},3/{01},5/{10},7/{11}}
            \draw(\i*\h-4.5*\h,-3+4*\v)node{$\tex0$}++(0,\y)--node[left]{$0$} ++(\h/2-\y,1-\y-2*\v)++(2*\y,0)--node[right]{$a_{3,\i}$} ++(\h/2-\y,\y-1+2*\v) ++(0,-\y)node{$\tex1$};
        \draw[blue,very thick](\y,0)--++(2*\h-\y,\y-1)++(-\y,\v-\y)-- ++(-\h+\y,\y-1)++(\y,\v-\y)--++(\h/2-\y,2*\v+\y-1);
    \end{tikzpicture}
    \caption{\label{fig3} The first three levels of the binary tree $\cT$ and   the labeled tree $(\cT, L)$.  In particular, the word $a_{1,1}a_{2,2}a_{3,5}$ represents the integer $a_{1,1}+4 \cdot 0 +4^2a_{3,5}.$}
\end{figure}

      The concept of $4-$based expansion of integers in Proposition \ref{lembase4} can be generalized as follows.
    \begin{defi}[\textbf{Quasi  $4-$based expansion}]\label{defn.1}
        Given a labeled tree $(\cT, L)$. An integer $\lambda\in\Z$ is said to have a {\it quasi $4-$based expansion} with respect to (w.r.t.) the labeled tree $(\cT,L)$ if there exists an infinite path $a_{1,k_1}a_{2,k_2}a_{3,k_3}\cdots$ such that the following two properties hold.\\ \indent
        $({\rm i})$ Each $a_{n,k_n}$ satisfies \eqref{eq.1}, and the subscript   $\{k_n\}_{n\ge1}$ satisfies that
            \begin{equation}\label{eqkn}
                0\le 2k_n\le k_{n+1}\le 2k_n+1\le 2^{n}-1, \ \mbox{ for all }n\ge1.
            \end{equation}\indent
        $({\rm ii})$ $\lambda$ satisfies the following congruence equations
            \begin{equation}\label{eqlambda}
             \lambda\equiv \sum_{i=1}^{n}4^{i-1} a_{i,k_i}\ppmod{4^n}, \ \mbox{ for all }n\ge1.
             \end{equation}
        The infinite path $a_{1,k_1}a_{2,k_2}\cdots$ is called the {\it quasi $4-$based expansion} of $\lambda$.
    \end{defi}
    A quasi $4-$based expansion $a_{1,k_1}a_{2,k_2}\cdots$ of an integer $\lambda$ is said to be \textit{finte} if there exists a positive integer $N>0$ such that $a_{n,k_n}=0$ for all $n>N$; and it is said to be {\it infinite} if
        \begin{equation}\label{eqkiinfinite}
            \#\{n:\, a_{n,k_n} \in 2\Z+1\} =+\infty.
        \end{equation}
    Here and after, $\#E$ denotes the   cardinality  of the set $E$.

    \begin{rema}\label{rem.2}
        For an integer $\lambda$, its quasi $4-$based expansion with respect to the given labeled tree $(\cT, L)$ is unique. Indeed, if $\lambda$ has two distinct quasi $4-$based expansions $a_{1,k_1}a_{2,k_2}\cdots$ and $a'_{1,k_1}a'_{2,k_2}\cdots$,  then $n:=\min\{i:\,a_{i,k_i}\ne a_{i,k_i}'\}\ge1$. This means that $a_{n,k_{n}}-a'_{n,k_{n}} \in 2\Z+1$ by Definition \ref{defn.1} (i). However, $a_{n,k_n}\equiv a'_{n,k_n}\ppmod4$ by Definition \ref{defn.1} ({\rm ii}). It is a contradiction.
        \end{rema}

    For a given  labeled tree $(\mathcal{T}, L)$, there are two discrete sets which have  the finite  and infinite quasi $4-$based expansions respectively, that is,
     \begin{gather}\label{eq.3}
     \Lambda(L)=\left\{\sum_{i=1}^{n} 4^{i-1}a_{i,k_i}:    \text{$\{a_{i,k_i}\}_{i\ge1}$  and $\{k_i\}_{i\ge1}$ satisfies \eqref{eq.1} and \eqref{eqkn} respectively}, n \in \N \right\},\\ \label{eq.7}
     \Lambda_I(L)=
            \left\{\lambda\in\Z:\,\lambda\equiv \sum_{i=1}^{n} 4^{i-1}a_{i,k_i} \!\!\!\pmod{4^n}, \text{ $\{a_{i,k_i}\}_{i\ge1}$  and $\{k_i\}_{i\ge1}$ as in \eqref{eq.1} and \eqref{eqkn}}
            \right\}.
    \end{gather}
    Obviously, $\Lambda(L)\subseteq\Lambda_I(L)$.
    In \cite[Theorem 4.4]{DHS2009}, the authors established the following fundamental theorem on the spectrality of the set $\Lambda_I(\Lambda)$ in $L^2(\mu_4)$.

    \begin{theo}[\!\cite{DHS2009}]\label{lembasic}
    Let $(\cT,L)$ be a bounded labeled tree. Then $\Lambda_I(L)$ is a spectrum for $\mu_4$.
     \end{theo}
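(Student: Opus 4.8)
The plan is to check the two defining properties of a spectrum separately: orthonormality of $E(\Lambda_I(L))$, which is routine, and completeness, which is the real content. For the orthogonality I would start from the factorization $\widehat{\mu_4}(\xi)=\tfrac12(1+e^{-\pi i\xi})\,\widehat{\mu_4}(\xi/4)$, which gives $|\widehat{\mu_4}(\xi)|^2=\cos^2(\pi\xi/2)\,|\widehat{\mu_4}(\xi/4)|^2$ and, upon unfolding, the zero set $\mathcal{Z}(\widehat{\mu_4})=\bigcup_{n\ge1}4^{n-1}(2\Z+1)$. If $\lambda,\lambda'\in\Lambda_I(L)$ are distinct, their quasi $4$-based expansions first disagree at some level $n$; by Definition \ref{defn.1}(i) the two edges at that level carry labels whose difference is odd, while all lower levels agree modulo $4^{n-1}$, so $\lambda-\lambda'\in 4^{n-1}(2\Z+1)\subseteq\mathcal{Z}(\widehat{\mu_4})$ and hence $\widehat{\mu_4}(\lambda-\lambda')=0$. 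Thus $E(\Lambda_I(L))$ is orthonormal, and by the Jorgensen--Pedersen criterion it is a basis for $L^2(\mu_4)$ if and only if
\[
Q_{\Lambda_I(L)}(\xi):=\sum_{\lambda\in\Lambda_I(L)}|\widehat{\mu_4}(\xi+\lambda)|^2\equiv 1,
\]
the inequality $Q_{\Lambda_I(L)}\le 1$ being automatic from Bessel. It therefore suffices to prove this identity.

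The engine is that $(4,\{0,2\},\{0,c\})$ is a Hadamard triple for every odd $c$: the matrix $\tfrac{1}{\sqrt2}\bigl(\begin{smallmatrix}1&1\\1&-1\end{smallmatrix}\bigr)$ obtained from $\bigl(e^{-2\pi i b\ell/4}\bigr)_{b\in\{0,2\},\,\ell\in\{0,c\}}$ is unitary, which is the analytic identity $\cos^2(\pi\xi/2)+\sin^2(\pi\xi/2)=1$ coming from the oddness of $c$. I would then split $\Lambda_I(L)$ according to the level-one node $k_1\in\{0,1\}$, writing $\Lambda_I(L)=4\,\Lambda_I(L')\,\sqcup\,\bigl(a_{1,1}+4\,\Lambda_I(L'')\bigr)$, where $(\cT,L')$ and $(\cT,L'')$ are the left and right subtrees, each again bounded by the same $M$. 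Substituting this into $Q$ and using $|\widehat{\mu_4}(\xi)|^2=\cos^2(\pi\xi/2)\,|\widehat{\mu_4}(\xi/4)|^2$ together with the $2$-periodicity of $\cos^2(\pi\xi/2)$ (the odd shift $a_{1,1}$ converting the second mask into $\sin^2$) yields the self-similar identity
\[
Q_{\Lambda_I(L)}(\xi)=\cos^2(\tfrac{\pi\xi}{2})\,Q_{\Lambda_I(L')}(\tfrac{\xi}{4})+\sin^2(\tfrac{\pi\xi}{2})\,Q_{\Lambda_I(L'')}(\tfrac{\xi+a_{1,1}}{4}),
\]
a genuine convex combination, since the coefficients are nonnegative and sum to $1$.

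It remains to upgrade this to $Q\equiv1$. First, $Q_{\Lambda_I(L)}(0)=1$: the all-left path shows $0\in\Lambda_I(L)$ with $\widehat{\mu_4}(0)=1$, whereas every other element of $\Lambda_I(L)$ lies in $\mathcal{Z}(\widehat{\mu_4})$ by the orthogonality computation, so all remaining terms vanish. Iterating the self-similar identity $n$ times expresses $Q_{\Lambda_I(L)}(\xi)=\sum_{|w|=n}P_w(\xi)\,Q_{\Lambda_I(L_w)}(T_w\xi)$ as a convex combination over admissible length-$n$ paths $w$, where $T_w\xi=(\xi+\lambda_w)/4^n$ and $\lambda_w$ is the partial sum along $w$.

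The main obstacle is the final step. Since $|\lambda_w|$ can be of order $4^n$, the points $T_w\xi$ do not collapse to $0$, so one cannot simply take a limit. This is exactly where boundedness of $L$ becomes decisive: the labels lie in the finite set $\{c\in 2\Z+1:|c|\le M\}$, so the subtrees $L_w$ range over a class that is closed under the subtree operation and is compact for the natural topology, and the points $T_w\xi$ stay in a fixed compact set. I would set $\alpha:=\inf Q_{\Lambda_I(L)}(\xi)$ over this whole class and all $\xi\in\R$, and use the convex-combination identity, the continuity (indeed real-analyticity) of each $Q$, and this compactness to show that the value $\alpha$, once attained, must propagate along inverse branches to the dense set of $4$-adic preimages of $0$, where $Q=1$; hence $\alpha=1$ and $Q_{\Lambda_I(L)}\equiv1$ for every bounded labeled tree. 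I expect this uniform propagation-and-compactness argument, rather than the algebraic recursion, to be the delicate heart of the proof.
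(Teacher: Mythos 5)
First, a caveat on the comparison itself: the paper offers no proof of this theorem --- it is quoted from \cite{DHS2009} (Theorem 4.4 there) --- so your proposal can only be measured against the literature, not against an in-paper argument. Up to the iteration step your skeleton is correct and is indeed the standard one: the zero set $\mathcal{Z}(\widehat{\mu_4})=\bigcup_{n\ge1}4^{n-1}(2\Z+1)$ together with the first-disagreement argument gives orthogonality; the Jorgensen--Pedersen criterion reduces the theorem to $Q_{\Lambda_I(L)}\equiv1$; the splitting $\Lambda_I(L)=4\Lambda_I(L')\sqcup\bigl(a_{1,1}+4\Lambda_I(L'')\bigr)$ is valid; and the convex-combination identity and $Q_{\Lambda_I(L)}(0)=1$ are correct.

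The final step --- the one you yourself flag as the heart --- has a genuine gap, and here is a decisive way to see it: every ingredient you invoke there (the subtree splitting, the self-similar identity, $Q(0)=1$, Bessel, the compact class of bounded labelings closed under passing to subtrees) holds verbatim with $\Lambda(L)$ of \eqref{eq.3} in place of $\Lambda_I(L)$ of \eqref{eq.7}. So if the ``take $\alpha=\inf Q$ and propagate the minimum to points where $Q=1$'' argument worked as described, it would equally prove that $\Lambda(L)$ is a spectrum for every bounded labeling --- which is false: for the constant labeling with all labels equal to $3$ one has $-1\equiv\sum_{k=1}^{n}3\cdot4^{k-1}\pmod{4^n}$ for every $n$, so $-1\in\Lambda_I(L)\setminus\Lambda(L)$, $e_{-1}\perp E(\Lambda(L))$, and $\Lambda(L)$ is incomplete (Corollary~\ref{rem.1}; see also the proof of Theorem~\ref{theodim'}(iii)). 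Concretely, two steps break. (1) Attainment: $Q_L(\xi)$ is not continuous (nor semicontinuous) in $L$ for the product topology, because two labelings agreeing up to level $n$ produce sets $\Lambda_I$ that agree only modulo $4^n$ --- their elements can be arbitrarily far apart in $\R$ --- so compactness of the class does not produce a minimizer. (2) Propagation: the minimum travels along $\xi\mapsto\xi/4$ and $\xi\mapsto(\xi+a)/4$ only through branches with nonzero weight; at an odd integer the left weight $\cos^2(\pi\xi/2)$ vanishes, and the forced orbit need never approach $0$ (e.g.\ $\xi^*=1$ is a fixed point of $\xi\mapsto(\xi+3)/4$), while $Q$ is \emph{not} known to equal $1$ at ``$4$-adic preimages of $0$'' --- only at $0$ and at $-\Lambda_I(L)$. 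The missing idea is precisely the mechanism that distinguishes $\Lambda_I$ from $\Lambda$: an orbit $\xi_{n}=(\xi_{n-1}+\omega_n)/4$ that stays in $\Z$ forever satisfies $-\xi^*\equiv\sum_{k=1}^{n}4^{k-1}\omega_k\pmod{4^n}$ for all $n$, i.e.\ $-\xi^*$ is an element of $\Lambda_I(L)$ with an \emph{infinite} quasi $4$-based expansion, whence $Q_{\Lambda_I(L)}(\xi^*)=1$ outright; and along an unobstructed all-left branch one needs a bound uniform over the class, such as $Q_{L'}(\eta)\ge|\widehat{\mu_4}(\eta)|^2$ (valid because $0\in\Lambda_I(L')$), to conclude as $\eta=\xi^*/4^n\to0$ --- pointwise continuity of each individual $Q$ is useless here, since the function changes at every step. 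With these repairs, and a substitute for attainment, the approach can be completed; but as written, your final step would ``prove'' a false statement, so it cannot be correct.
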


    Theorem \ref{lembasic} immediately  yields the following corollary.
    \begin{coro}\label{rem.1}
         Let $(\cT,L)$ be a bounded labeled tree. Then  $\Lambda(L)$ is a spectrum for the measure $\mu_4$ if and only if $\Lambda(L)=\Lambda_I(L)$; equivalently, $\Lambda(L)$ is not a spectrum for $\mu_4$ if and only if there exists an integer $\lambda\in\Lambda_I(L)\setminus \Lambda(L)$ such that $\lambda$ has an infinite quasi $4-$based expansion with respect to the  labeled tree $(\mathcal{T}, L)$.
    \end{coro}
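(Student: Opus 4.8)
The plan is to derive everything from the single fact (Theorem \ref{lembasic}) that the ambient set $\Lambda_I(L)$ is a spectrum for $\mu_4$, together with the inclusion $\Lambda(L)\subseteq\Lambda_I(L)$. Being a spectrum, $E(\Lambda_I(L))$ is an orthonormal \emph{basis} of $L^2(\mu_4)$; in particular any two distinct elements $\lambda,\lambda'\in\Lambda_I(L)$ satisfy $\langle e_\lambda,e_{\lambda'}\rangle_{L^2(\mu_4)}=\widehat{\mu_4}(\lambda'-\lambda)=0$. Since $\Lambda(L)\subseteq\Lambda_I(L)$, the subsystem $E(\Lambda(L))$ is automatically orthonormal, so that $\Lambda(L)$ is a spectrum if and only if $E(\Lambda(L))$ is \emph{complete}.

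First I would dispose of the easy implication: if $\Lambda(L)=\Lambda_I(L)$, then $\Lambda(L)$ is a spectrum directly by Theorem \ref{lembasic}. For the reverse implication I argue by contraposition. Suppose $\Lambda(L)\subsetneq\Lambda_I(L)$ and fix $\lambda_0\in\Lambda_I(L)\setminus\Lambda(L)$. For every $\lambda\in\Lambda(L)$ we have $\lambda\neq\lambda_0$ and both lie in $\Lambda_I(L)$, so orthogonality of $E(\Lambda_I(L))$ gives $\langle e_{\lambda_0},e_\lambda\rangle_{L^2(\mu_4)}=0$. As $\mu_4$ is a probability measure, $\|e_{\lambda_0}\|_{L^2(\mu_4)}=1$, so $e_{\lambda_0}$ is a nonzero vector orthogonal to the whole system $E(\Lambda(L))$; hence $E(\Lambda(L))$ is not complete and $\Lambda(L)$ is not a spectrum. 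This establishes the first stated equivalence: $\Lambda(L)$ is a spectrum $\iff\Lambda(L)=\Lambda_I(L)$.

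It remains to recast the negation. By the equivalence just proved, $\Lambda(L)$ fails to be a spectrum exactly when $\Lambda_I(L)\setminus\Lambda(L)\neq\emptyset$, so I only need to check that every $\lambda\in\Lambda_I(L)\setminus\Lambda(L)$ has an \emph{infinite} quasi $4$-based expansion. By definition, $\lambda\in\Lambda_I(L)$ possesses a quasi $4$-based expansion, which is moreover unique by Remark \ref{rem.2}. I would then observe that $\lambda\in\Lambda(L)$ precisely when this expansion is \emph{finite}: a finite admissible path $a_{1,k_1}\cdots a_{n,k_n}$ extends to a quasi expansion ending in zeros by repeatedly choosing $k_{j+1}=2k_j$ (which is admissible in \eqref{eqkn} and forces $a_{j+1,k_{j+1}}=0$ by \eqref{eq.1}), and conversely an eventually-zero expansion pins down $\lambda=\sum_{i=1}^n4^{i-1}a_{i,k_i}\in\Lambda(L)$ through the congruences \eqref{eqlambda} taken mod arbitrarily large powers of $4$. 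Consequently, for $\lambda\in\Lambda_I(L)\setminus\Lambda(L)$ the unique expansion is not finite.

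The one point that requires the structure of the labels is that ``not finite'' coincides with ``infinite'' in the sense of \eqref{eqkiinfinite}. This is where the dichotomy in \eqref{eq.1} enters: every entry $a_{n,k_n}$ is either $0$ (when $k_n$ is even) or an odd integer (when $k_n$ is odd), so an expansion that is not eventually zero must contain infinitely many odd entries, i.e.\ $\#\{n:\,a_{n,k_n}\in2\Z+1\}=+\infty$. I expect this finite/infinite dichotomy, rather than any analytic estimate, to be the only genuinely non-formal step; the remainder is the elementary completeness argument of the second paragraph, built entirely on Theorem \ref{lembasic} and the inclusion $\Lambda(L)\subseteq\Lambda_I(L)$.
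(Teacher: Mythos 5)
Your proof is correct and follows exactly the route the paper intends: the paper derives this corollary "immediately" from Theorem \ref{lembasic} via the inclusion $\Lambda(L)\subseteq\Lambda_I(L)$, the observation that a proper subfamily of an orthonormal basis cannot be complete, and the finite/infinite dichotomy of quasi $4$-based expansions forced by \eqref{eq.1} and the uniqueness in Remark \ref{rem.2}. You have simply written out the details the paper leaves implicit, and all of them check out.
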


\subsection{A special labeled tree}\label{subsec.2.2}

    For the sake of further discussions about the spectral eigenvalue  problem, a special label should be paid more attention to.
    More precisely, for each level $n\ge1$, the left edges in $E_n$ as in \eqref{eq.15} are still labeled by zero but the right edges in $E_n$ are labeled by the same odd integer $a_n:=a_{n,1}=a_{n,3}=\cdots=a_{n,2^n-1}.$
    In this case, the label set $L$ given as in \eqref{eqL} will become
        \begin{equation}\label{eql2}
            L=a_1a_2a_2\cdots\underset{2^{n-1}\text{times}}{\underbrace{a_n\cdots a_n}}\cdots.
        \end{equation}
    Because the same number $a_n$ is chosen on the right edges in $E_n$, it is convenient for us to use a new symbol
        \begin{equation}\label{eq.19}
            A:=a_{1}a_{2}a_{3}\cdots,
        \end{equation}
    instead of $L$ as in \eqref{eql2}, to denote the label of the binary tree $\mathcal{T}$, see Figure \ref{fig4} for the first three  levels of the labeled tree $(\cT, A)$. The label $A$ is said to be {\it bounded} if $\{a_n\}_{n=1}^\infty$ is bounded.  {For a label $A$, denote
        \begin{gather}\label{Lamsigma}
            \Lambda(A):=\left\{\sum_{k=1}^n 4^{k-1}\omega_k:\, \omega_k\in\{0,a_{k}\},n\ge1 \right\},\\\label{eq.17}
            \Lambda_I(A):=\left\{\lambda:\,\lambda\equiv\sum_{k=1}^{n} 4^{k-1}\omega_k \ppmod{4^n} \mbox{ for all }n\ge1, \omega_k\in\{0, a_{ k}\}\right\},
        \end{gather}}

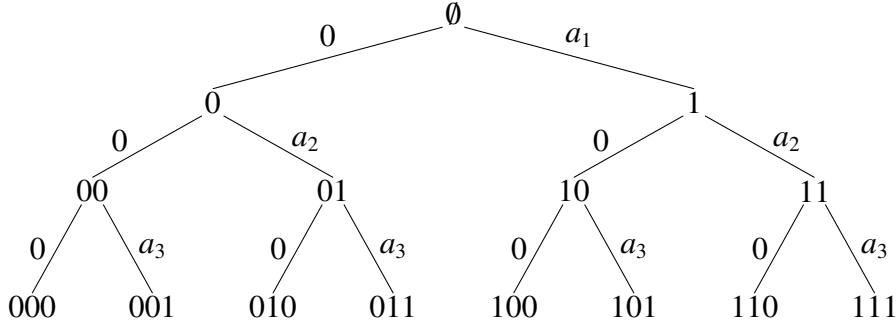
\begin{figure}[htb]
    \begin{tikzpicture}[xscale=.8]
        \pgfmathsetmacro{\h}{2}
        \pgfmathsetmacro{\y}{.18}
        \pgfmathsetmacro{\v}{-.18}
        \draw(-2*\h,-1)node{$0$}++(0,\y)--node[above]{$0$}++(2*\h-\y,1-\y) ++(\y,\y)node{$\emptyset$} ++(\y,-\y)--node[above]{$a_{1}$}++(2*\h-\y,\y-1)++(0,-\y)node{$1$};
        \foreach \i in {0,1}
            \draw(4*\i*\h-3*\h,\v-2)node{$\i0$}++(0,\y)--node[above,near start]{$0$}++(\h-\y,1-\y)++(2*\y,0)--node[above,near end]{$a_{2}$}++(\h-\y,\y-1) ++(0,-\y)node{$\i1$};
        \foreach \i/\tex in {1/{00},3/{01},5/{10},7/{11}}
            \draw(\i*\h-4.5*\h,-3+4*\v)node{$\tex0$}++(0,\y)--node[left]{$0$} ++(\h/2-\y,1-\y-2*\v)++(2*\y,0)--node[right]{$a_{3}$} ++(\h/2-\y,\y-1+2*\v) ++(0,-\y)node{$\tex1$};
    \end{tikzpicture}
    \caption{\label{fig4} The first three levels of the  binary tree $\cT$  and the special labeled tree $(\cT, A)$, where $A$ is as in \eqref{eq.19}.}
    \end{figure}

    As a consequence of  Theorem \ref{lembasic} and Corollary \ref{rem.1}, we get that the following result.

    \begin{prop}\label{prop.2}
        Assume the notations are the same as above. If the label $A$ as in \eqref{eq.19} is bounded, then $\Lambda_I(A)$ in \eqref{eq.17} is a spectrum for the measure $\mu_4$. Furthermore, $\Lambda(A)$ is a spectrum for $\mu_4$ if and only if $\Lambda(A)=\Lambda_I(A)$.
    \end{prop}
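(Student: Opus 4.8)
The plan is to recognize the special labeled tree $(\cT, A)$ as a particular instance of the general bounded labeled tree $(\cT, L)$ governed by Theorem \ref{lembasic} and Corollary \ref{rem.1}, and then transport those two statements verbatim. The label $A = a_1 a_2 a_3 \cdots$ of \eqref{eq.19} is, by construction \eqref{eql2}, nothing but the general label $L = a_{1,1} a_{2,1} a_{2,3} \cdots$ in which every right edge on level $n$ carries the same odd value $a_{n,1} = a_{n,3} = \cdots = a_{n,2^n-1} = a_n$. Since $A$ is bounded exactly when the sequence $\{a_n\}$ is bounded, which is exactly when $L$ is bounded in the sense of Section \ref{subsec.Labeled tree}, the hypothesis of Theorem \ref{lembasic} will be available.

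The one identity that must be checked is that the sets $\Lambda(A), \Lambda_I(A)$ of \eqref{Lamsigma}, \eqref{eq.17} agree with $\Lambda(L), \Lambda_I(L)$ of \eqref{eq.3}, \eqref{eq.7} for this $L$. First I would note that, for the special label, the edge value $a_{n,k_n}$ along a path depends only on the parity of the subscript $k_n$: by \eqref{eq.1} and \eqref{eql2} it equals $0$ when $k_n$ is even and $a_n$ when $k_n$ is odd. Consequently each partial sum $\sum_{i=1}^n 4^{i-1} a_{i,k_i}$ coincides with a sum $\sum_{i=1}^n 4^{i-1}\omega_i$, $\omega_i \in \{0, a_i\}$, of the type appearing in \eqref{Lamsigma} and \eqref{eq.17}.

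Next I would show that the admissibility constraint \eqref{eqkn} places no restriction on these parities. Because \eqref{eqkn} forces $k_{n+1} \in \{2k_n, 2k_n+1\}$ --- the two child edges of the vertex reached through $k_n$ --- one may at each level take $k_{n+1} = 2k_n$ (even, value $0$) or $k_{n+1} = 2k_n+1$ (odd, value $a_{n+1}$) independently of all earlier choices; combined with the initial freedom $k_1 \in \{0,1\}$ this produces a bijection between admissible paths and arbitrary sequences $(\omega_i)$ with $\omega_i \in \{0,a_i\}$. Hence $\Lambda(A) = \Lambda(L)$ and $\Lambda_I(A) = \Lambda_I(L)$, and the two assertions of the proposition then follow immediately: Theorem \ref{lembasic} yields that $\Lambda_I(A)$ is a spectrum for $\mu_4$, while Corollary \ref{rem.1} yields that $\Lambda(A)$ is a spectrum for $\mu_4$ if and only if $\Lambda(A) = \Lambda_I(A)$. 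I expect the only non-routine step to be this parity--bijection argument, i.e. confirming that \eqref{eqkn} is compatible with prescribing the left/right choice freely at every level; the remainder is a direct invocation of the quoted results.
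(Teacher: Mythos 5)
Your proposal is correct and matches the paper's approach exactly: the paper states Proposition \ref{prop.2} as an immediate consequence of Theorem \ref{lembasic} and Corollary \ref{rem.1}, which is precisely your strategy. The parity--bijection argument you supply (identifying $\Lambda(A)=\Lambda(L)$ and $\Lambda_I(A)=\Lambda_I(L)$ via the free choice $k_{n+1}\in\{2k_n,2k_n+1\}$) is the routine verification the paper leaves implicit, and you carry it out correctly.
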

    \section{Qualitative characterization on the  spectra for middle-fourth Cantor measure}
    \label{sec.measure}

    The  purpose of this section is to %give some fine description of Theorem \ref{lembasic} due to Dutkay \textit{et al} \cite{DHS2009}  from the viewpoint of measure and dimension. More precisely, we are going to
    investigate when the set $\Lambda(L)$ defined as in \eqref{eq.3} will become a spectrum for $\mu_4$. The general measure-theoretical and dimensional characterizations are given in Theorem \ref{theo13} and Theorem \ref{theodim} (i.e., Theorem \ref{theodim-1}), respectively. As a consequence of Theorem \ref{theo13},   the proofs of Theorem \ref{theomain1} and   \ref{coro1} are given. Also, the particular variant of Theorem \ref{theo13} and Theorem \ref{theodim} will  be respectively given in Theorem \ref{theomain1'} and Theorem \ref{theodim'}. In addition, two examples (Examples \ref{exam2} and \ref{exam1}) are  constructed to illustrate the theory of Theorem \ref{theo13}.

    %Before stating and proving our main results, we introduce

    We begin with the concept of {\it incomplete sets} in symbolic space. This is the starting point of our research on the characterizations of spectra for $\mu_4$.
    \subsection{Incomplete set}
      Let $m\geq2$ be an integer, and let $\Sigma_m^*$ and $\Sigma_m^\infty$ be given as in Section 2.  The notation of \textit{incomplete set} of $\Sigma_m^\infty$ is defined as follows.
    \begin{defi}\label{defn.incom}
     Suppose that $\Theta$ is a subset of $\Sigma_m^\infty$.\\
     \indent$({\rm i})$  Given an $\bi\in\Theta$, we say that  $\bj\in\Sigma_m^*$ is an \textbf{incomplete node} of $\bi$ if ~$\bj=\bi|_k$ for some $k \in \N$ and there exists a  $j\in \Sigma_m$ such that  $[\bj*j]\cap\Theta=\emptyset$.\\
     \indent$({\rm ii})$  An $\bi\in\Theta$ is said to be an \textbf{incomplete path}
       if $\bi$ has infinitely many  incomplete nodes. \\
       \indent$({\rm iii})$  The set  $\Theta$ is called an \textbf{incomplete set} of $\Sigma_m^\infty$ if each $\bi\in\Theta$ is an incomplete path.
        \end{defi}
   Let $\tilde\mu$ be a Borel probability measure   on  $\Sigma_m^\infty$ as in \eqref{eq.16}. Lemma \ref{lempoor}  shows that  the incomplete set  of $\Sigma_m^\infty$ is of   $\tilde\mu$-null set, which plays  an important role in the proof of Theorem \ref{theo13}.

    \begin{lem}\label{lempoor}
        If $\Theta \subseteq \Sigma_m^\infty$ is  an incomplete set of $\Sigma_m^\infty $, then   $\tilde\mu(\Theta)=0$.
    \end{lem}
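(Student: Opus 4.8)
The plan is to cover $\Theta$ by a decreasing sequence of finite-or-countable unions of cylinders whose total $\tilde\mu$-measure decays geometrically, each refinement step exploiting that passing through an incomplete node forces us to discard at least one of the $m$ equal-measure children.

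First I would isolate the local mechanism. Call a finite word $\bw\in\Sigma_m^*$ \emph{deficient} if there is a $j\in\Sigma_m$ with $[\bw*j]\cap\Theta=\emptyset$; whether $\bw$ is deficient depends only on $\bw$, and, unwinding Definition \ref{defn.incom}, a word $\bi\in\Theta$ is an incomplete path precisely when it has infinitely many deficient initial segments $\bi|_k$. For a deficient $\bw$ the set $\Theta\cap[\bw]$ is contained in the union of the at most $m-1$ surviving children $[\bw*j]$ with $[\bw*j]\cap\Theta\ne\emptyset$, so $\tilde\mu^*(\Theta\cap[\bw])\le (m-1)m^{-(|\bw|+1)}=\tfrac{m-1}{m}\tilde\mu([\bw])$. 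I will work throughout with the outer measure $\tilde\mu^*$, since $\Theta$ is not assumed measurable a priori.

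The heart of the argument is an inductive refinement producing antichains $A_0=\{\emptyset\},A_1,A_2,\dots\subseteq\Sigma_m^*$ with $\Theta\subseteq[A_n]$, with the cylinders $\{[\bj]:\bj\in A_n\}$ pairwise disjoint, and with $\tilde\mu([A_n])\le(\tfrac{m-1}{m})^n$. Given $A_n$, for each $\bj\in A_n$ and each $\bi\in\Theta\cap[\bj]$ I choose the shortest deficient initial segment $\bw$ of $\bi$ of length at least $|\bj|$ (such a $\bw$ exists because $\bi$ has infinitely many deficient initial segments, and it has $\bj$ as an initial segment); let $M_{\bj}$ collect these words. A minimality argument shows $M_{\bj}$ is an antichain whose cylinders cover $\Theta\cap[\bj]$: if two of its words were comparable, the shorter one would be a still-shorter deficient prefix of the path realizing the longer one, a contradiction. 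Replacing each $\bw\in M_{\bj}$ by its surviving children then gives the part of $A_{n+1}$ lying inside $[\bj]$. Applying the per-node bound and summing over the antichain $M_{\bj}$ (so that $\sum_{\bw\in M_{\bj}}\tilde\mu([\bw])\le\tilde\mu([\bj])$), then over $\bj\in A_n$, yields the recursion $\tilde\mu([A_{n+1}])\le\tfrac{m-1}{m}\tilde\mu([A_n])$; disjointness of the $[\bj]$ across $A_n$ guarantees the new family is again an antichain and that $\Theta\subseteq[A_{n+1}]$.

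Finally, since $\Theta\subseteq[A_n]$ and $\tilde\mu([A_n])\le(\tfrac{m-1}{m})^n\to0$, monotonicity of outer measure gives $\tilde\mu^*(\Theta)=0$, whence $\Theta$ is $\tilde\mu$-measurable with $\tilde\mu(\Theta)=0$. I expect the main obstacle to be the bookkeeping in the inductive step: checking that the ``shortest deficient descendant'' words $M_{\bj}$ genuinely form an antichain covering $\Theta\cap[\bj]$, and that advancing to their surviving children keeps the global family $A_{n+1}$ an antichain so that the measures add without overlap. The measure-loss inequality is immediate once deficiency is phrased node-wise; the real care lies in the tree combinatorics and in the systematic use of $\tilde\mu^*$ to sidestep any measurability issue for $\Theta$.
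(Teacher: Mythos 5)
Your proof is correct and follows essentially the same route as the paper: your inductively constructed antichains coincide with the paper's sets $I_k$ of $k$th incomplete nodes (your $[A_n]$ being the paper's surviving-children covers $[J_n]$), and the per-step contraction factor $\tfrac{m-1}{m}$ leading to $\tilde\mu([A_n])\le\left(\tfrac{m-1}{m}\right)^n\to0$ is identical. The only difference is that you make explicit two points the paper leaves implicit --- that the covering words at each stage form an antichain so their cylinder measures add without overlap, and the systematic use of outer measure so that $\Theta$ need not be assumed measurable --- which are refinements of detail rather than a different argument.
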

    \begin{proof}
        For each $\bi\in\Theta$ and each $k\ge1$, we use $N_k(\bi)$ to denote the length of $k$th incomplete node of $\bi\in\Theta$. Let (see Figure \ref{fig1} for an illustration of $m=4$)
        \begin{gather}\label{eqI_k}
            I_k=\{\bi|_{N_k(\bi)}:\,\bi\in\Theta\},
        \end{gather}
        and
        \begin{gather}\label{eqJ_k}
            J_k=\{\bj*j:\,[\bj*j]\cap \Theta\ne\emptyset, \bj\in I_k, j \in \Sigma_m\}.
        \end{gather}

    \newcommand{\mypara}[4][0]{%
    \ifthenelse{#1=0}{
    \draw #2sin++(.1,-.1)--++({(#3)/2-.22},0)--++(.12,-.15) node[yshift=-5pt]{#4}--++(.12,.15)--++({(#3)/2-.22},0) cos++(.1,.1);}
    {\draw #2cos++(.1,.1)--++({(#3)/2-.22},0)--++(.12,.15) node[yshift=5pt]{#4}--++(.12,-.15)--++({(#3)/2-.22},0) sin++(.1,-.1);}}

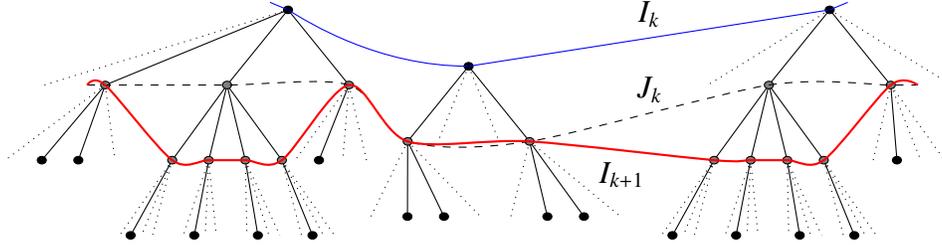
\begin{figure}[htb]
\begin{tikzpicture}[xscale=1.2]
    \pgfmathsetmacro{\r}{.05}
    \pgfmathsetmacro{\hy}{2}   % Ê×ÐÐË®ÆœŒäŸà
    \pgfmathsetmacro{\y}{-.75} % Ê×ÐÐŽ¹Ö±ÏÂ³ÁŒäŸà
    \pgfmathsetmacro{\h}{1.35} % µÚÒ»Žú×ÓËïŒäŸà
    \pgfmathsetmacro{\hh}{.15*\h}% µÚ¶þŽú×ÓËïŒäŸà
    \pgfmathsetmacro{\hhh}{.3*\hh}% µÚÈýŽú×ÓËïŒäŸà
    \pgfmathsetmacro{\v}{1}    % ×ÓËïžß¶È
    \coordinate (a1) at (0,0);
    \coordinate (a2) at (\hy,\y);
    \coordinate (a3) at (3*\hy,0);
    \coordinate (b1) at (-1.5*\h,-\v);
    \coordinate (b2) at ($(b1)+(\h,0)$);
    \coordinate (b3) at ($(b2)+(\h,0)$);
    \coordinate (b4) at ($(a2)+(-.5*\h,-\v)$);
    \coordinate (b5) at ($(a2)+(.5*\h,-\v)$);
    \coordinate (b6) at ($(a3)+(-.5*\h,-\v)$);
    \coordinate (b7) at ($(a3)+(.5*\h,-\v)$);
    \coordinate (c1) at ($(b2)+(-3*\hh,-\v)$);
    \coordinate (c2) at ($(b2)+(-1*\hh,-\v)$);
    \coordinate (c3) at ($(b2)+(1*\hh,-\v)$);
    \coordinate (c4) at ($(b2)+(3*\hh,-\v)$);
    \coordinate (c5) at ($(b6)+(-3*\hh,-\v)$);
    \coordinate (c6) at ($(b6)+(-1*\hh,-\v)$);
    \coordinate (c7) at ($(b6)+(1*\hh,-\v)$);
    \coordinate (c8) at ($(b6)+(3*\hh,-\v)$);
    \foreach \i in {a1,a2,a3}
        \draw[fill=black](\i)circle(\r);
    \draw[blue]($(a1)+(-.2,.1)$)--(a1)parabola bend (a2) (a2)--(a3)--++(.2,.1);
    %\draw[white,dashed,very thick]($.6*(a2)+.4*(a3)$)--($.4*(a2)+.6*(a3)$);
    \foreach \i/\sty in {-3/black,-1/black,1/black,-4/dotted}
        \draw[\sty](a1)--++(\i*\h*.5,-\v);
    \foreach \i/\sty in {.5/dotted,-1/black,1/black,-.5/dotted}
        \draw[\sty](a2)--++(\i*\h*.5,-\v);
    \foreach \i/\sty in {-2/dotted,-1/black,1/black,2/dotted}
        \draw[\sty](a3)--++(\i*\h*.5,-\v);
    \foreach \a in {b2,b6}
        \foreach \i/\yy in {-3/-.4,-1/-.1,1/.2,3/.4}
            {\draw(\a)--++(\i*\hh,-\v);
            \foreach \j/\sty in {-3/dotted,-1/black,1/dotted,3/dotted}
                {\draw[\sty]($(\a)+(\i*\hh,-\v)$)--++(\j*\hhh+\yy,-\v);
                \ifthenelse{\j=-1}
                    {\draw[fill=black]($(\a)+(\i*\hh+\j*\hhh+\yy,-2*\v)$) circle(\r);}{;}
                }}
    \foreach \a/\yy in {b1/-.5,b4/.2,b5/.4}
        \foreach \i/\sty in {-3/dotted,-1/black,1/black,3/dotted}
            {\draw[\sty](\a)--++(\i*\hh+\yy,-\v);
            %\pgfmathsetmacro{\ii}{{\i*\i}};
            \ifthenelse{\i=-1}
                {\draw[fill=black]($(\a)+(\i*\hh+\yy,-\v)$)circle(\r);}{
                \ifthenelse{\i=1}{\draw[fill=black]($(\a)+(\i*\hh+\yy,-\v)$)circle(\r);}
                {;}}
            }
    \foreach \a/\yy in {b3/-.2,b7/.2}
        \foreach \i/\sty in {-3/dotted,-1/black,1/dotted,3/dotted}
            {\draw[\sty](\a)--++(\i*\hh*.65+\yy,-\v);
            \ifthenelse{\i=-1}
            {\draw[fill=black]($(\a)+(\i*\hh*.65+\yy,-\v)$)circle(\r);}
            {;}}
    \foreach \i in {b1,b2,b3,b4,b5,b6,b7,c1,c2,c3,c4,c5,c6,c7,c8}
        \draw[fill=gray](\i)circle(\r);
    \draw[dashed,smooth]plot coordinates{($(b1)+(-.2,0)$)(b1)(b2)(b3)(b4)(b5)(b6) (b7)($(b7)+(.3,0)$)};
    \draw[thick,red,smooth]plot coordinates {($(b1)+(-.2,0)$)(b1)(c1)(c2)(c3)(c4)(b3)(b4)(b5)(c5)(c6)(c7)(c8) (b7)($(b7)+(.3,0)$)};
    \foreach \i/\j/\tex/\sty in {a2/a3/I_k/above,b5/b6/J_k/above,b5/c5/I_{k+1}/below}
        \draw($.5*(\i)+.5*(\j)$)node[\sty]{$\tex$};
\end{tikzpicture}
    \caption{\label{fig1}
    An illustration for $I_k,J_k$ in $\Sigma_4^\infty$. The points lying in the first horizontal  solid  line form the set $I_k$, and the points lying in the second horizontal dashed line form the set $J_k$, and the points lying in the third curve form the set $I_{k+1}$. }
     \end{figure}

    As in \eqref{eq.21}, the sets $[I_k]$ and $[J_k]$ are the union of cylinder sets.
    It is not hard to see that $\{[I_k]\}_{k=1}^\infty$  is a  sequence of decreasing sets of $\Sigma_m^\infty$ and $\Theta\subseteq\bigcap_{k\ge1} [I_k]$.
    This yields that
    $$\tilde{\mu}(\Theta) \leq\tilde{\mu}([I_k])$$
    holds for all $k \in \N$. Thus, the   task left for us is to estimate the size of each $\tilde{\mu}([I_k])$.

    \smallskip
    In fact, since each $\bj\in I_k$  is an  incomplete node of some $\bi \in \Theta$, it follows that the cardinality of the set
    $$A_\bj:=\{j\in \Sigma_m:\,[\bj*j]\cap \Theta\ne\emptyset\}$$
    is less than $m$, which implies that
    $$\tilde\mu\bigg(\bigcup_{j\in A_\bj}[\bj*j]\bigg)
    =\sum_{j\in A_\bj}\tilde\mu([\bj*j])
    =\dfrac1{m}\sum_{j\in A_\bj}\tilde\mu([\bj])
    \le  \dfrac{m-1}{m}\tilde\mu([\bj]).$$
    Therefore,  according to the definitions of $[I_{k}]$ and $[J_{k}],$ one has
    $$\tilde\mu([J_{k}])
    =\tilde\mu\bigg(\bigcup_{\bj\in I_k}\bigcup_{j\in A_\bj}[\bj*j]\bigg)
    =\sum_{\bj\in I_k}\tilde\mu\bigg(\bigcup_{j\in A_\bj}[\bj*j]\bigg)
    \le \dfrac{m-1}{m} \tilde\mu([I_k]).$$
    By the definitions of $I_k, J_k$, one has that the  inclusion relation $[I_{k+1}]\subseteq [J_k]$ holds for all $k \in \N$. Whence,
    $$\tilde\mu([I_{k+1}])\le \dfrac{m-1}{m}\tilde\mu([I_k]) \qquad (\forall \ k \in \N).$$
    Combining this with the fact that $[I_1]\subset\Sigma_m^\infty$ and $\tilde\mu(\Sigma_m^\infty)=1$,    one gets that
    $$\tilde\mu([I_k])\le \left(\dfrac{m-1}{m}\right)^{k-1}, \qquad (\forall \ k \in \N).$$
    Letting $k \rightarrow \infty$, we  conclude that $\tilde\mu(\Theta)=0$.
    The proof of Lemma \ref{lempoor} is completed.
        \end{proof}

    \subsection{Characterizations of the spectra from the viewpoint of Measure and Dimension}
    Here and below, we always assume that $C:=\{c_0,\ldots,c_{m-1}\}\subseteq 2\Z+1,$ where $2\le m\in\N$. For simplicity of notations, we always use the symbol $\cL$ to denote the set of infinite words generated by $C$, that is,
        \begin{equation}\label{eq.5}
        \cL:=C^\infty=C \times C \times  \cdots =\{c^{(1)}c^{(2)}\cdots:\,c^{(j)}\in C\}.
        \end{equation}

    We next define the {\it image measure} $\mu$ on $\mathcal{L}$ as follows,
        \begin{equation}\label{eqmu}
            \mu(\cdot):=\tilde\mu\circ \pi^{-1}(\cdot),
        \end{equation}
    where $\pi\,:\Sigma_m^\infty\mapsto\cL$ is the natural projection mapping
         \begin{equation}\label{eq.18}
            \pi(i_1i_2i_3\cdots)= c_{i_1}c_{i_2}c_{i_3}\cdots,
        \end{equation}
     and $\tilde\mu$ is the Borel probability measure on $\Sigma_m^\infty$  defined by \eqref{eq.16}. Clearly, $\pi$ is a bijection.

    Our first main result Theorem \ref{theo13} below shows that for $\mu$-almost all $L \in \cL$,     the set  $\Lambda(L)$ is a spectrum for $\mu_4$, which refines \cite[Theorem 4.4]{DHS2009} (see Theorem \ref{lembasic})  from the viewpoint of measure.
    \begin{theo}\label{theo13}
        Let $\mu$ defined as in \eqref{eqmu} be the Borel probability measure  on $\cL$ as in \eqref{eq.5}. If the finite digit set $C:=\{c_0,\ldots,c_{m-1}\}\subseteq 2\Z+1$ satisfies that
        \begin{equation}\label{eqC13}
            C\equiv\{1,3\}\ppmod4,
        \end{equation}
    then for $\mu$-a.e.  $L\in \cL$, the set $\Lambda(L)$ as in \eqref{eq.3} forms a spectrum for $\mu_4$.
    \end{theo}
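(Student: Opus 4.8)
The plan is to combine the tree characterization of spectrality (Corollary \ref{rem.1} and Proposition \ref{prop.2}) with the incomplete-set measure estimate (Lemma \ref{lempoor}). Since $\pi$ is a bijection and $\mu=\tilde\mu\circ\pi^{-1}$, it suffices to prove $\tilde\mu(\cN)=0$, where
$\cN=\{\bi\in\Sigma_m^\infty:\ \Lambda(\pi(\bi))\text{ is not a spectrum}\}$, because $\{L\in\cL:\Lambda(L)\text{ is not a spectrum}\}=\pi(\cN)$ and $\mu(\pi(\cN))=\tilde\mu(\cN)$. Writing $L=\pi(\bi)=c_{i_1}c_{i_2}\cdots$ and $a_n:=c_{i_n}$, the label $L$ is bounded because $C$ is finite, so Proposition \ref{prop.2} and Corollary \ref{rem.1} tell us that $\Lambda(L)$ fails to be a spectrum precisely when some integer $\lambda$ admits an \emph{infinite} quasi $4$-based expansion with respect to $(\cT,L)$.

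The first step I would carry out is to recast this into a deterministic one-dimensional dynamics. Fix $\lambda$, set $t_0=\lambda$, and define $t_n=(t_{n-1}-\omega_n)/4$, where $\omega_n\in\{0,a_n\}$ is the unique value (if one exists) with $\omega_n\equiv t_{n-1}\ppmod4$; this is forced by the congruences \eqref{eqlambda}. Since $0$ is even and $a_n$ is odd, the choice is dictated by $t_{n-1}\bmod 4$: it is $0$ when $t_{n-1}\equiv0$, it is $a_n$ when $t_{n-1}\equiv a_n\ppmod4$, and otherwise the recursion \emph{terminates}, meaning $\lambda\notin\Lambda_I(L)$. A routine check then shows that $\lambda$ has an infinite quasi $4$-based expansion if and only if this trajectory is defined for every $n$ and satisfies $t_n\ne0$ for all $n$ (once $t_n=0$ the expansion is forced to be finite, and a surviving trajectory with only finitely many nonzero $\omega_n$ would force $t_n\to0$).

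The crucial structural move is to decompose $\cN=\bigcup_{\lambda\in\Z}\cN_\lambda$, where $\cN_\lambda:=\{\bi:\lambda\text{ has an infinite quasi }4\text{-based expansion w.r.t. }(\cT,\pi(\bi))\}$; this is a countable union. The point of fixing $\lambda$ is that the state $t_k$ then depends only on the prefix $i_1\cdots i_k$ and not on the tail of $\bi$, which is exactly what makes Lemma \ref{lempoor} applicable. I would then prove that each $\cN_\lambda$ is an incomplete set. Let $\bi\in\cN_\lambda$ with surviving trajectory $(t_n)$, $t_n\ne0$ for all $n$. First, infinitely many $t_k$ are odd: otherwise $t_k$ is even for all $k\ge k_0$, which rules out the branch $\omega_{k+1}=a_{k+1}$ (odd), so survival forces $t_k\equiv0\ppmod4$ for all $k\ge k_0$; iterating gives $t_{k_0}\equiv0\ppmod{4^{j}}$ for every $j$, whence $t_{k_0}=0$, a contradiction. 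Now take any $k$ with $t_k$ odd, say $t_k\equiv r\ppmod4$ with $r\in\{1,3\}$. Here is where the hypothesis $C\equiv\{1,3\}\ppmod4$ enters essentially: choose $j\in\Sigma_m$ with $c_j\not\equiv r\ppmod4$. For every $\bi'\in[\bi|_k*j]$ the first $k$ symbols coincide with those of $\bi$, so the trajectory from $\lambda$ reaches the same odd value $t_k$; at the next step the available label is $c_j$ with $c_j\not\equiv t_k$ and $t_k\not\equiv0\ppmod4$, so the trajectory terminates and $\bi'\notin\cN_\lambda$. Thus $[\bi|_k*j]\cap\cN_\lambda=\emptyset$, so $\bi|_k$ is an incomplete node; as there are infinitely many such $k$, $\bi$ is an incomplete path, and $\cN_\lambda$ is incomplete. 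Lemma \ref{lempoor} then gives $\tilde\mu(\cN_\lambda)=0$, and summing over the countable index set yields $\tilde\mu(\cN)\le\sum_{\lambda}\tilde\mu(\cN_\lambda)=0$, as required.

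The main obstacle I anticipate is the survival/termination analysis in the last paragraph, namely establishing that a surviving trajectory must visit odd states infinitely often and that at each such visit one can \emph{switch} the next symbol to force termination. The latter is exactly where $C\equiv\{1,3\}\ppmod4$ is used: both odd residues must be represented so that a ``killing'' symbol $c_j$ is always available; if only one odd residue occurred in $C$, this step would fail. The remaining points (the equivalence between infinite expansions and non-vanishing trajectories, and the bookkeeping for $\mu=\tilde\mu\circ\pi^{-1}$) are expected to be routine.
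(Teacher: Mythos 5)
Your overall strategy is the same as the paper's: decompose the exceptional set as a countable union $\bigcup_{\lambda\in\Z}\cN_\lambda$, exhibit each $\cN_\lambda$ as an incomplete set by producing a ``killing'' symbol (which exists precisely because $C\equiv\{1,3\}\ppmod 4$ puts both odd residues in $C$), and conclude with Lemma \ref{lempoor}; your dynamical reformulation (infinite expansion $\Leftrightarrow$ surviving trajectory with $t_n\neq0$, and survival forces infinitely many odd states) is also correct. However, there is a genuine gap: you have analyzed the wrong family of sets. In Theorem \ref{theo13}, $\Lambda(L)$ is defined by \eqref{eq.3}, where $L\in\cL$ labels the binary tree \emph{edge by edge} in the enumeration \eqref{eqL}: the odd label available at level $n$ of an expansion path is $a_{n,k_n}$, which is the entry of $L$ at the path-dependent position $l_n=2^{n-1}+\left[(k_n-1)/2\right]$, not the entry at position $n$. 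Your dynamics takes $\omega_n\in\{0,a_n\}$ with $a_n:=c_{i_n}$, i.e., it assumes every right edge at level $n$ carries the same label $c_{i_n}$. That is the special labeled tree $(\cT,A)$ of Subsection \ref{subsec.2.2} and the set $\Lambda(A)$ of \eqref{Lamsigma}; in other words, your argument proves Theorem \ref{theomain1'} (equivalently Theorem \ref{theomain1}), which is a different statement: the probability space $(\cL,\mu)$ is the same, but the map $L\mapsto\Lambda(L)$ is not, since already at level two the set in \eqref{eq.3} involves two independent entries $a_{2,1},a_{2,3}$ of $L$.

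This misidentification breaks the two steps your incompleteness claim rests on. First, under the correct reading the state $t_k$ does not depend only on $i_1\cdots i_k$; it depends on the entries of $L$ at positions $l_1<\cdots<l_k$, which can be as large as $2^k-1$. Second, and fatally, for $\bi'\in[\bi|_k*j]$ the label available at step $k+1$ is \emph{not} $c_j$: it is the entry of $\pi(\bi')$ at position $2^{k}+m_k$ (where $m_k$ is the current node index), far beyond position $k+1$; changing the $(k+1)$st symbol of $\bi$ only alters the label of one right edge at level about $\log_2 k$, which need not even lie on the expansion path. Hence $[\bi|_k*j]\cap\cN_\lambda=\emptyset$ is unjustified, and the nodes $\bi|_k$ you designate are not incomplete nodes. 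The repair is exactly the paper's bookkeeping: since the expansion of $\lambda$ uses an odd label $a_{n,k_n}$ at infinitely many levels $n$, take the incomplete nodes to be $\bi|_{l_n-1}$ and place the killing symbol $c_j$ (chosen with $c_j+a_{n,k_n}\equiv0\ppmod4$) at position $l_n$; because $l_1,\ldots,l_{n-1}\le l_n-1$, the prefix $\bi|_{l_n-1}$ already freezes the expansion through level $n-1$, so this choice terminates every continuation and shows $[\bi|_{l_n-1}*j]\cap\cN_\lambda=\emptyset$. With that correction your proof coincides with the paper's.
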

    \begin{proof}
        We will use Corollary \ref{rem.1} to show that $\mu\big(\cN(C)\big)=0$ if \eqref{eqC13} holds, where
        \begin{equation}\label{eq.6}
            \cN(C):=\{L \in \cL: \text{$\Lambda(L)$  as in \eqref{eq.3} is not a spectrum for $\mu_4$}\}.
        \end{equation}
        For this purpose, if $\lambda\in\Z$, we set
        \begin{equation}\label{eq.22}
            \cN_\lambda(C)=  \left \{L \in \cL: \text{$\lambda\in \Z$ has an infinite quasi $4-$based expansion w.r.t. $(\mathcal{T}, L)$}\right\}.
        \end{equation}
        By Corollary \ref{rem.1}, we know that $\cN(C)= \bigcup_{\lambda\in\Z}\cN_\lambda(C)$. Thus, it suffices to show that $\mu\big(\cN_\lambda(C)\big)=0$ holds for all $\lambda\in\Z$. Next, we fix $\lambda\in\Z$ and to show $\mu\big(\cN_\lambda(C)\big)=0$ in the following two cases.

        (1) If $\cN_\lambda(C)$ is an empty set (it is possible, for instance, see Theorem \ref{theodim} (i) below), it is clear that $\mu(\cN_\lambda(C))=0$.

        (2) If $\cN_\lambda(C)$ is not an empty set, we will get the desired result if one can show that $\pi^{-1}\big(\cN_\lambda(C)\big)$ is an incomplete set of $\Sigma_m^\infty$, where $\pi$ is defined as in \eqref{eq.18}. This is true because
            $$\mu\big(\cN_\lambda(C)\big)=\tilde{\mu}\Big(\pi^{-1} \big(\cN_\lambda(C)\big)\Big)=0,$$
        by \eqref{eqmu} and Lemma \ref{lempoor}.

        It remains to show that $\pi^{-1}\big(\cN_\lambda(C)\big)$ is an incomplete set of $\Sigma_m^\infty$. In fact, if $\bi\in\pi^{-1}\big(\cN_\lambda(C)\big),$ there is a  \textbf{unique} label $L \in \cN_\lambda(C)$ such that $\bi=\pi^{-1}(L)$. Without loss of generality, we may assume the label $L$ as in \eqref{eqL}:
            $$L= a_{1,1}a_{2,1}a_{2,3}a_{3,1}a_{3,3}a_{3,5}a_{3,7}\cdots \in  \cL.$$
        According to Definition \ref{defn.1} and Remark \ref{rem.2}, the fixed integer $\lambda$  has the unique infinite expansion $a_{1,k_1}a_{2,k_2}\cdots$ with respect to the labeled tree $(\cT,L)$  such that \eqref{eqkn}, \eqref{eqlambda} and \eqref{eqkiinfinite} hold. Now \eqref{eqkiinfinite} implies that the cardinality of the set
            \begin{equation}\label{eq.10-1}
                I=\{n:\,a_{n,k_n}\in C\}
            \end{equation}
        is infinity.

        Noting that for each $n\in I$ the odd integer $a_{n,k_n}$ for $k_n\in \{1,3,\ldots,2^{n}-1\}$, is located at the $l_n$th position of the label $L$, where
            $$l_n:=2^{n-1}+\bigg[\frac{k_n-1}2\bigg],$$
        and the assumption \eqref{eqC13} implies that for each odd integer $a\in C$, there is some $c\in C$ such that $a+c\equiv0\ppmod4$ holds.    Therefore, if the $l_n$th number $a_{n,k_n}$ of the label $L$ is replaced by other $c_j\in C$ with $c_j+a_{n,k_n}\equiv0\ppmod4$, then \eqref{eqlambda} fails, i.e.,
            $$\lambda\not\equiv\sum_{i=1}^{n-1}4^{i-1}a_{i,k_i} +4^{n-1}c_j\pmod{4^{n}}.$$
        Combining with the fact that $\pi:\Sigma_m^\infty\mapsto\cL$ as in \eqref{eq.18} is a bijection, the definition of $\cN_\lambda(C)$ in \eqref{eq.22} implies  that
            $$[\bi|_{l_n-1}*j]\cap \pi^{-1}\big(\cN_\lambda(C)\big)=\emptyset.$$
        The above arguments tell us that  finite words $\{\bi|_{l_n-1}\}_{n\in I}$  are all incomplete nodes of $\bi$,  and hence   $\bi$ is an infinite incomplete path in $\Sigma_m^\infty$ since $I$ is an infinite set by \eqref{eq.10-1}. Therefore, $\pi^{-1}(\cN_\lambda(C))$ is an incomplete set of $\Sigma_m^\infty$ since $\bi\in\pi^{-1}\big(\cN_\lambda(C)\big)$ is arbitrary. This ends the proof of Theorem \ref{theo13}.
    \end{proof}

    \begin{rema}
        In terms of Theorem \ref{lembasic},  the set $\cN(C)$ in \eqref{eq.6} can be written as
            \begin{equation}\label{eq.8}
                \mathcal{N}(C)=\{L \in \cL:  \Lambda(L) \not =\Lambda_I(L), \ L \in \cL=C^\infty\},
            \end{equation}
        where $\Lambda(L)$ and $\Lambda_I(L)$ are defined by \eqref{eq.3} and \eqref{eq.7}, respectively.
        Thus, Theorem \ref{theo13} can be restated as:
        If $C\subseteq 2\Z+1$ is a finite set satisfying that $C\equiv\{1,3\}\ppmod4$, then $\mu(\mathcal{N}(C))=0$.
    \end{rema}

    As the set $\cN(C)$ (see \eqref{eq.6} or \eqref{eq.8}) is always a $\mu$-null set for any  finite digit set $C\subseteq2\Z+1$, it is naturally hopeful to know how large the cardinality of the set  $\cN(C)$ is.
    Our second main result Theorem \ref{theodim} refines Theorem \ref{theo13} from the viewpoint of dimension.
    \begin{theo}[0-1 law]\label{theodim}
    Assume that  $C:=\{c_0,\cdots,c_{m-1}\}\equiv\{1,3\}\ppmod4$,    we write $\cN(C)$ as
    $$
    \cN(C):=\{L \in \cL: \text{$\Lambda(L)$  as in \eqref{eq.3} is not a spectrum for $\mu_4$}\}.
    $$
    Then either
    $({\rm i})$ $\cN(C)$ is an empty set,  or $({\rm ii})$ $\cN(C)$ is an infinite set with full Hausdorff dimension.
    \end{theo}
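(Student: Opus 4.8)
The plan is to prove a dichotomy: either $\cN(C)$ is empty, or it contains a self-similar-type subset of full Hausdorff dimension in $\cL=C^\infty$. The starting observation is that $\cN(C)=\bigcup_{\lambda\in\Z}\cN_\lambda(C)$ as in the proof of Theorem~\ref{theo13}, where $\cN_\lambda(C)$ records those labels $L$ for which the fixed integer $\lambda$ has an \emph{infinite} quasi $4$-based expansion with respect to $(\cT,L)$. Suppose $\cN(C)$ is nonempty; then there is some $\lambda_0\in\Z$ and some $L_0\in\cN_{\lambda_0}(C)$, so $\lambda_0$ has an infinite quasi $4$-based expansion along $L_0$. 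The key mechanism exposed in the proof of Theorem~\ref{theo13} is that at each level $n\in I$ where the odd digit $a_{n,k_n}$ actually appears, the congruence \eqref{eqlambda} forces that digit modulo $4$; but since $C\equiv\{1,3\}\ppmod4$, at \emph{each such position} there is genuine freedom to choose any $c_j\in C$ with $c_j\equiv a_{n,k_n}\ppmod 4$ without disturbing the infinite-expansion property of $\lambda_0$. This is the source of the large cardinality.

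First I would make this freedom precise. Fix the witnessing $L_0\in\cN_{\lambda_0}(C)$ and the corresponding infinite index set $I=\{n: a_{n,k_n}\in C\}$, which is infinite by \eqref{eqkiinfinite}. Since $C\equiv\{1,3\}\ppmod4$, partition $C=C_1\cup C_3$ into the residue-$1$ and residue-$3$ classes; the hypothesis guarantees both $C_1$ and $C_3$ are nonempty. At every position of the label $L$ corresponding to an index $n\in I$, replacing the digit by any element of $C$ in the \emph{same} residue class modulo $4$ leaves \eqref{eqlambda} intact for all levels, hence preserves membership of $\lambda_0$ in the infinite-expansion set; meanwhile positions outside $I$ contribute $0$-labeled edges and are irrelevant to $\lambda_0$'s expansion. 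Thus the set of admissible labels obtained from $L_0$ by independently choosing, at each $n\in I$, an arbitrary element of $C_1$ (if $a_{n,k_n}\equiv1$) or of $C_3$ (if $a_{n,k_n}\equiv3$) is entirely contained in $\cN_{\lambda_0}(C)\subseteq\cN(C)$. This already shows $\cN(C)$ is infinite whenever $\#C_1\ge2$ or $\#C_3\ge2$; the boundary case $\#C_1=\#C_3=1$ (i.e.\ $m=2$) must be handled separately, and there I would argue that if $\lambda_0$ has an infinite expansion, one can still flip residue classes at coordinates \emph{off} $I$ to manufacture infinitely many distinct labels lying in $\cN(C)$, or else exhibit directly a second integer $\lambda_1$ whose infinite expansion produces fresh labels.

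Next I would upgrade ``infinite'' to ``full Hausdorff dimension.'' The admissible set constructed above is, after transporting back through the bijection $\pi$ of \eqref{eq.18} to $\Sigma_m^\infty$, a product-type subset: coordinates in $I$ range freely over a residue class (a subalphabet of $\Sigma_m$), and coordinates outside $I$ are frozen. The natural metric on $\cL$ is the standard ultrametric compatible with cylinder sets, under which $\cL\cong C^\infty$ has a fixed Hausdorff dimension $s_0=\log m/\log(1/r)$ (with $r$ the contraction ratio fixing the metric), and ``full dimension'' means dimension $s_0$. Because $I$ is infinite with, in general, positive upper density along which the full residue-class alphabet is available, a standard frequency/coding argument (e.g.\ constructing a Cantor-type subset and estimating via a mass-distribution or the natural Bernoulli-type measure supported on the admissible labels) yields Hausdorff dimension equal to $s_0$. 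Concretely I would place a measure $\nu$ on the admissible labels that is uniform on the free coordinates, compute its local dimension at $\nu$-a.e.\ point, and invoke the mass distribution principle to bound $\dim_H$ from below by $s_0$; the upper bound is automatic since the set sits inside $\cL$.

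The main obstacle I anticipate is the degenerate case and the dimension estimate working \emph{in tandem}. When $m$ is small (especially $m=2$, so $\#C_1=\#C_3=1$), there is no intra-residue freedom at the indices in $I$, and the argument for full dimension must instead exploit freedom at positions \emph{outside} $I$ combined with the observation that changing off-$I$ coordinates can create \emph{new} integers with infinite expansions (reindexing $\lambda$) rather than preserving the old one. Making this rigorous requires a careful accounting of how the quasi $4$-based expansion of a \emph{varying} integer depends on the label, and showing the resulting union over $\lambda\in\Z$ of such product sets still achieves dimension $s_0$; since a countable union cannot decrease dimension below that of any single member, it suffices to secure one full-dimension piece. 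The interplay between the congruence constraints \eqref{eqlambda} along $I$ and the dimensional freedom is therefore the delicate heart of the proof, and I would expect to devote the bulk of the effort to verifying that at least one full-dimension sub-Cantor set survives inside $\cN(C)$ under the sole hypothesis $C\equiv\{1,3\}\ppmod4$.
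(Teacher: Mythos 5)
Your central mechanism is broken. You claim that at a level $n\in I$ one may replace the digit $a_{n,k_n}$ by any $c_j\in C$ with $c_j\equiv a_{n,k_n}\pmod 4$ and that this ``leaves \eqref{eqlambda} intact for all levels.'' It does not: the substitution changes every partial sum $\sum_{i=1}^{N}4^{i-1}a_{i,k_i}$ with $N\ge n$ by $4^{n-1}(c_j-a_{n,k_n})$, which is divisible by $4^{n}$ but in general not by $4^{N}$, so only the level-$n$ congruence survives and all higher ones are destroyed; by uniqueness of quasi $4$-based expansions (Remark \ref{rem.2}) you cannot keep the old tail either. Concretely, take $C=\{1,3,7\}$, the label all of whose entries equal $3$, and $\lambda_0=-1$, whose expansion is the all-$3$ path since $-1\equiv\sum_{i=1}^{n}3\cdot 4^{i-1}\pmod{4^n}$. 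Replacing the level-$2$ digit on this path by $7$ (same residue class) keeps the level-$2$ congruence but forces at level $3$ a digit $\omega_3\in\{0,3\}$ with $\omega_3\equiv 2\pmod 4$, which is impossible; hence $-1$ has no quasi expansion at all with respect to the modified label, and your family is not contained in $\cN_{\lambda_0}(C)$. Independently of this, your dimension count also fails: the label entries you allow to vary, at most one per level $n$, sit inside the block of $2^{n-1}$ entries that level $n$ contributes to $L$, so they form a density-zero set of coordinates of $C^\infty$; a product set that is frozen off a density-zero set of coordinates has Hausdorff dimension $0$, not full dimension. Your premise that $I$ has ``positive upper density'' ignores the breadth-first indexing of the label.

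The correct argument is the opposite freezing, which you mention only as a fallback for $m=2$ but which works in complete generality and is what the paper does. Freeze exactly the entries $a_{n,k_n}$, $n\in I$, that the expansion of $\lambda_0$ actually reads, and let every other entry of the label range arbitrarily over $C$. At levels outside $I$ the path uses $0$-labelled left edges, so for every such label the identical congruences \eqref{eqlambda} hold; by Remark \ref{rem.2} this infinite path is the (unique) expansion of $\lambda_0$ there, so all these labels lie in $\cN_{\lambda_0}(C)\subseteq\cN(C)$ --- no ``re-indexing of $\lambda$'' occurs, contrary to your worry. Since the frozen coordinates have density zero, this subset of $C^\infty$ has full Hausdorff dimension (and is in particular infinite), which proves the dichotomy; note that this argument uses neither intra-residue freedom nor the hypothesis $C\equiv\{1,3\}\ppmod 4$, the latter entering only through the example $C=\{1,7\}$ showing that alternative (i) can actually occur.
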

    \begin{proof}
    For $({\rm i})$, we set $C= \{1,7\}$. Suppose on the contrary that $\cN(C)$ is not an empty set. Then there exists a label $L \in \cL$ and a nonzero integer $\lambda$ such that it has the following  infinite quasi $4-$based expansion (with respect to $(\mathcal{T}, L)$)
    \begin{equation}\label{eq.exam.1}
     \lambda \equiv \sum_{i=1}^{n} 4^{i-1} a_{i} \!\!\!\pmod{4^n}, \qquad \text{ for all $n \geq 1$},
    \end{equation}
    where $a_1a_2a_3\cdots \in \{0,1,7\}^\infty$ and $I=\{n: a_{n} \in C\}$ is an infinite set. We will obtain $(i)$ by the following two claims.

        \noindent\textbf{Claim 1.} The segments $01, 10, 11, 07, 70, 77$  do not  appear infinitely often in  the quasi $4-$based expansion expansion $a_1a_2a_3\cdots$ as in \eqref{eq.exam.1}.

    \noindent\textbf{Claim 2.} The segment $171$ does not appear infinitely often in the quasi $4-$based expansion $a_1a_2a_3\cdots$ as in \eqref{eq.exam.1}.

    In fact,  if Claim 1 and Claim 2 hold, then for any label $L \in \cL$, there is no integer $\lambda\in \Lambda_I(L) \setminus \Lambda(L)$ such that \eqref{eq.exam.1} holds in which $I=\{n: a_{n} \in C\}$ is an infinite set. This  means that $\cN(C)$ is an empty set, so (i) follows.

  Next, we show Claims 1 and 2.

\smallskip
  \smallskip\noindent\textit{Proof of Claim 1.} Suppose   on the contrary that there are infinitely many $n \in \N$ such that $a_{n+1}a_{n+2}=77.$  It then yields contradictions as follows.
    First, since
    \begin{equation}\label{eqest51}
    i_{n}:=\left[\dfrac{\sum_{i=1}^n4^{i-1}a_i}{4^{n-1}}\right]
    <\left[\dfrac{7\sum_{i=1}^n4^{i-1}}{4^{n-1}}\right]<3, \qquad (\forall n\in \N),
    \end{equation}
    we get that the $4-$based expansion of $\sum_{i=1}^n4^{i-1}a_i$ is $i_1i_2\cdots i_{n-1}i_n\in \Sigma_4^n$ with $i_n\in\{0,1,2\}$, namely
    \begin{equation}\label{eq.9}
    \sum_{i=1}^n4^{i-1}a_i = \sum_{j=1}^{n}4^{j-1}i_j,\qquad \text{where} \qquad i_n\in\{0,1,2\}.
    \end{equation}
    Second, the segment $77$ is the quasi $4-$based expansion of the integer  $35$, which has the $4-$based expansion $302$.
    Thus, if $a_{n+1}a_{n+2}=77$, we can easily obtain that the $4-$based expansion of $\sum_{i=1}^{n+2}4^{i-1}a_i$ is $i_1\cdots i_{n-1} i_n'i_{n+1}'i_{n+2}'\in\Sigma^{n+2}_4$ with $i'_n i'_{n+1}i'_{n+2}\in \{302,012,112\}$. By \eqref{eq.exam.1} and Proposition \ref{prop.1}, if there are infinitely many $n$ such that $a_{n+1}a_{n+2}=77$, then the integer $\lambda$ does not end in $0^\infty$ or $3^\infty$. It is a contradiction to Proposition \ref{lembase4}.

    Similarly, it is not hard to see that the segment $07$ (resp. $70$) is the quasi $4-$based expansion of the integer $28$, which has the $4-$based expansion $031$ (resp. $31$). Thus, by doing the same arguments as we did above for $a_{n+1}a_{n+2}=77$,  we will get that  Claim 1 is true for the segments $01, 10, 11, 07, 70$. This completes the proof of Claim 1.

     \smallskip\noindent\textit{Proof of Claim 2.} We assume on the contrary that there are infinitely many $n \in \N$ such that
    $a_{n+1}a_{n+2}a_{n+3}=171.$ In this case, we know that the quasi $4-$based expansion $171$ and the $4-$based expansion $132$ represent the same positive integer $45$. Thus, by doing the same procedure as we did for the segment $77$ in the proof of Claim 1, one can obtain, from \eqref{eqest51} and \eqref{eq.9}, that if $a_{n+1}a_{n+2}a_{n+3}=171$, then the $4-$based expansion of $\sum_{i=1}^{n+3}4^{i-1}a_i$ is $i_1i_2\cdots i_{n-1}i'_n i'_{n+1}i'_{n+2}$, where $i'_n i'_{n+1}i'_{n+2}=132,232$ or $332$.  Thus, by \eqref{eq.exam.1} and Proposition \ref{prop.1} again, the integer $\lambda$ does not end in $0^\infty$ or $3^\infty$.  It is also a contradiction to Proposition \ref{lembase4}. This completes the proof of Claim 2.

\smallskip

     $({\rm ii})$ \ If $\cN(C)$ is not an empty set, by Theorem \ref{lembasic} and \eqref{eq.8}, for any label $$L:= a_{1,1}a_{2,1}a_{2,3}a_{3,1}a_{3,3}a_{3,5}a_{3,7}\cdots \in  \cN(C),$$ there is   an integer  $\lambda\in \Lambda_I(L)\setminus \Lambda(L)$ such that $\lambda$ has the following unique infinite quasi $4-$based expansion  with respect to the labeled tree $(\mathcal{T}, L)$,
     \begin{equation}\label{eq.10}
      \lambda\equiv \sum_{i=1}^{n}4^{i-1} a_{i,k_i}\ppmod{4^n},\qquad (n\ge1),
     \end{equation}
     where $a_{i,k_i}$  and $k_i$ are respectively satisfying Definition \ref{defn.1} ({\rm i}) and ({\rm ii}), and the set
     \begin{equation*}%\label{eq.exam.2}
     I=\{n:\,a_{n,k_n}\in C\},
     \end{equation*}
     has infinity many elements.  We consider the set
     \begin{equation*}
    \widetilde{\cN}_\lambda(C)=\{L= b_{1,1}b_{2,1}b_{2,3}b_{3,1}b_{3,3}b_{3,5}b_{3,7} \cdots: b_{n,k_n}=a_{n,k_n} \text{ for all $n \in I$}\},
     \end{equation*}
    which  clearly has Hausdorff dimension $1$ (e.g., cf. \cite{F1}).

    On the other hand, it is obvious that,
     for each label $L\in\widetilde{\cN}_\lambda(C)$, the integer $\lambda$ above also satisfies  the congruence equations \eqref{eq.10},
     which gives the unique infinite quasi base expansion  with respect to the labelled tree  $(\mathcal{T},L)$ (here, the uniqueness follows from Remark \ref{rem.2}).
     Consequently,  $\widetilde{\cN}_\lambda(C)$ is a subset of $ {\cN}(C)$. Thus the set $ {\cN}(C)$ also has Hausdorff dimension 1.
      \end{proof}

    We end this subsection by giving two examples to % Examples \ref{exam2} and \ref{exam1} are devoted to
    analyze the condition   \eqref{eqC13} of Theorem \ref{theo13}. To be precise, Example \ref{exam2}   shows that  \eqref{eqC13}  can not be removed  in Theorem \ref{theo13}, while Example \ref{exam1} implies that \eqref{eqC13} is sufficient but not necessary. In fact, Example \ref{exam1} also provides us new spectra of $\mu_4$ which are never found in the previous literatures, e.g., see \cite{DHS2009,Dai2016,F2019}.

%\medskip
    \begin{exam}\label{exam2}
    Let $C=\{3,15\}$. Then \eqref{eqC13} fails,  and for each label $L\in\cL$, the set $\Lambda(L)$ is not a spectrum for the measure $\mu_4$.
    \end{exam}
    \begin{proof}
        The condition \eqref{eqC13} clearly fails since $3\equiv15 ~\ppmod 4$.
        For any label $L\in \cL$, we consider the  infinite path $a_{1,k_1}a_{2,k_2}\cdots \in \{0,3,15\}^\infty$  on the labeled tree $(\mathcal{T}, L)$ which satisfies   the following conditions:
        $k_1=1$, $a_{1,k_1}=3 \ \text{or} \ 15$, and  for $n\ge1$,
        \begin{eqnarray*}
        a_{n+1,k_{n+1}}=\begin{cases}
                    0,& \mbox{if }a_{n,k_n}=15,\\
                    3 \ \text{or } 15 ,&\mbox{if }a_{n,k_n}=0 \ \text{or }  3,
                    \end{cases} \quad \hbox{where} \quad
        k_{n+1}=\begin{cases}
        2k_n,& \mbox{if }a_{n,k_n}=15,\\
        2k_n+1,& \mbox{if }a_{n,k_n}=0 \ \text{or }  3.
                    \end{cases}
        \quad
        \end{eqnarray*}
        It is not hard to see
        that the set $I:=\big\{n:\,a_{n,k_n}\in \{3, 15\}\big\}$ is an infinite set, and one can check that
        \begin{equation*}
        -1\equiv \sum_{i=1}^{n}4^{i-1} a_{i,k_i}\ppmod{4^n},\qquad \text{for all $n\in \N$},
        \end{equation*}
        which is equivalent to that $a_{1,k_1}a_{2,k_2}\cdots \in \{0,3,15\}^\infty$
        is the infinite quasi $4-$based expansion of $-1$.
        This yields that $-1 \in \Lambda_I(L) \setminus \Lambda(L)$, and thus  $\Lambda(L)$ is not a spectrum for the measure $\mu_4$ by  Corollary  \ref{rem.1}.
        The proof of Example \ref{exam2} is completed.
         \end{proof}

        %\medskip
        \begin{exam}\label{exam1}
        Let $C=\{1,5\}$. Then for each label $L\in\cL$, the set $\Lambda(L)$ is a spectrum for $\mu_4$.
        \end{exam}
        \begin{proof}
        We suppose that there is a label $L\in\cL$ such that  $\Lambda(L)$ is not a spectrum for $\mu_4$.
        Then Theorem \ref{lembasic} implies that there exists an integer $\lambda \in \Lambda_I(L) \setminus \Lambda(L)$ such that $\lambda$ has an infinite quasi $4-$based expansion $a_{1,k_1}a_{2,k_2}\cdots \in \{0,1,5\}^\infty$, that is,
        \begin{equation}\label{eq.4}
        \lambda\equiv \sum_{i=1}^{n}4^{i-1} a_{i,k_i}\ \ppmod{4^n}, \quad ( n\ge1),
        \end{equation}
        and the set $I:=\big\{n:\,a_{n,k_n}\in \{1,5\}\big\}$ is an infinite set.

        Assume that $i_1i_2i_3\cdots$ is the $4-$based expansion of $\lambda$. By  Proposition \ref{prop.1} and \eqref{eq.4}, we get that the $n$th component $i_n$ of $\lambda$ is
        \begin{equation*}
        i_n = \bigg[\dfrac{\sum_{i=1}^n4^{i-1}a_{i,k_i}}{4^{n-1}}\bigg]-4 \bigg[\dfrac{\sum_{i=1}^n4^{i-1}a_{i,k_i}}{4^{n}}\bigg].
        \end{equation*}
        Note  that for all $n > 1,$ one has
         $$0\le \sum_{i=1}^{n-1}4^{i-1}a_{i,k_i}\le \sum_{i=1}^{n-1 }4^{i-1}\times5< \frac{5}{3} \times 4^{n-1 }.$$
        Therefore, one can easily obtain that
        $$i_n=\begin{cases}
              1, & \text{if } a_{n, k_n}=1; \\
              1\text{ or }2, &\text{if } a_{n, k_n}=1.
           \end{cases}$$
        This means that the $4-$based expansion $i_1i_2i_3\cdots$ of $\lambda$ does not end in $0^\infty$ or $3^\infty$, contradicting to Proposition \ref{lembase4}.
        Thus, the above arguments show that for each label $L\in\cL$ the set $\Lambda(L)$ is a spectrum for $\mu_4$. The proof of Example \ref{exam1} is completed.
        \end{proof}

    \subsection{Particular cases of Theorem \ref{theo13}} %: Proof of Theorems \ref{theomain1}  and  \ref{coro1}}
    Let us now concentrate on the application of Theorem \ref{theo13} to the special labeled tree $(\cT, A)$,  which are described in Subsection \ref{subsec.2.2}. As before, for any integer $m\ge2$ and the  finite digit set $C=\{c_0,\ldots,c_{m-1}\}\subseteq 2\Z+1$, we still define the {\it image measure} $\mu$ on $\mathcal{L}$ as in \eqref{eqmu},  and let
    \begin{equation}\label{eq.6'}
    \mathcal{N}(C)=\{A \in \mathcal{L}:  \Lambda(A) \not =\Lambda_I(A)\},
    \end{equation}
    where $\Lambda(A)$ and $\Lambda_I(A)$ are respectively defined by \eqref{Lamsigma} and \eqref{eq.17}.

    The following result immediately follows from Theorem \ref{theo13}. We omit the details of the proof.
    \begin{theo}\label{theomain1'}
    Let $\mu$  defined as in \eqref{eqmu} be the Borel probability measure  on $\cL$ as in \eqref{eq.5}.
    If the finite digit set $C:=\{c_0,\ldots,c_{m-1}\}$ satisfies that $C\equiv\{1,3\}~\ppmod4$, then for $\mu$-a.e.  $A\in \cL$, the set $\Lambda(A)$ as in \eqref{Lamsigma} forms a spectrum for $\mu_4$. That is, $\mu\big(\cN(C)\big)=0$, where $\cN(C)$ is defined as in \eqref{eq.6'}.
    \end{theo}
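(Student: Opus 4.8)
The plan is to deduce Theorem \ref{theomain1'} as a direct specialization of Theorem \ref{theo13}, observing that the special labeled tree $(\cT, A)$ of Subsection \ref{subsec.2.2} is nothing but the labeled tree $(\cT, L)$ in which, on every level $n$, all the right edges carry the same odd integer $a_n$. Concretely, the label $L$ in \eqref{eqL} takes the repetitive form \eqref{eql2}, and the abbreviated symbol $A=a_1a_2a_3\cdots$ in \eqref{eq.19} encodes precisely this repetition. First I would make this identification precise by exhibiting the correspondence between the sequence $A=a_1a_2\cdots\in\cL=C^\infty$ and the full label $L$ obtained by repeating $a_n$ exactly $2^{n-1}$ times on level $n$; under this identification the general sets $\Lambda(L)$ and $\Lambda_I(L)$ of \eqref{eq.3} and \eqref{eq.7} collapse to the special sets $\Lambda(A)$ and $\Lambda_I(A)$ of \eqref{Lamsigma} and \eqref{eq.17}, because when every right edge on level $k$ equals $a_k$ the digit choice $a_{i,k_i}\in\{0,a_{i,1}=\cdots=a_{i,2^i-1}\}$ reduces to $\omega_k\in\{0,a_k\}$ independently of which subscript $k_i$ is chosen.

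The crucial point that makes the reduction clean is that the \emph{image measure} $\mu=\tilde\mu\circ\pi^{-1}$ of \eqref{eqmu}, the hypothesis $C\equiv\{1,3\}\ppmod4$ of \eqref{eqC13}, and the exceptional set $\cN(C)$ of \eqref{eq.6'} are all defined exactly as in Theorem \ref{theo13}; only the internal meaning of $\Lambda(L)$ versus $\Lambda(A)$ has been specialized. I would therefore observe that the set $\cN(C)=\{A\in\cL:\Lambda(A)\neq\Lambda_I(A)\}$ in \eqref{eq.6'} is, under the above identification, a subset of the set $\cN(C)=\{L\in\cL:\Lambda(L)\neq\Lambda_I(L)\}$ of Theorem \ref{theo13}. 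Indeed, by Proposition \ref{prop.2} (itself a consequence of Theorem \ref{lembasic} and Corollary \ref{rem.1}) the bounded special label $A$ satisfies the same dichotomy: $\Lambda(A)$ is a spectrum for $\mu_4$ if and only if $\Lambda(A)=\Lambda_I(A)$. Hence failure of spectrality for the special tree is governed by exactly the same infinite-quasi-$4$-expansion obstruction that drives the proof of Theorem \ref{theo13}.

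Given these identifications, the actual argument is a verbatim rerun of the proof of Theorem \ref{theo13}: for each fixed $\lambda\in\Z$ I would set $\cN_\lambda(C)=\{A\in\cL:\lambda \text{ has an infinite quasi }4\text{-based expansion w.r.t. }(\cT,A)\}$, show $\cN(C)=\bigcup_{\lambda\in\Z}\cN_\lambda(C)$ via Corollary \ref{rem.1}, and then verify that $\pi^{-1}(\cN_\lambda(C))$ is an incomplete set of $\Sigma_m^\infty$ in the sense of Definition \ref{defn.incom}. The incompleteness comes, exactly as before, from the hypothesis \eqref{eqC13}: whenever $\lambda$ has an infinite expansion, there are infinitely many levels $n$ at which $a_n\in C$ is a genuine odd digit, and at each such level replacing $a_n$ by the partner $c_j\in C$ with $c_j+a_n\equiv0\ppmod4$ destroys the congruence \eqref{eqlambda}, producing an incomplete node. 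Lemma \ref{lempoor} then forces $\tilde\mu(\pi^{-1}(\cN_\lambda(C)))=0$, whence $\mu(\cN_\lambda(C))=0$ and $\mu(\cN(C))=0$.

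I expect the main (and really only) obstacle to be purely bookkeeping rather than conceptual: one must be careful that the replacement argument still works when a single symbol $a_n$ controls all $2^{n-1}$ right edges of level $n$ simultaneously. In the general tree of Theorem \ref{theo13} one replaces a single edge-label at position $l_n=2^{n-1}+[(k_n-1)/2]$, whereas here changing $a_n$ changes every right edge on level $n$ at once. The point to check is that this is harmless: since the quasi-expansion of $\lambda$ uses only the one digit $a_n$ at level $n$ anyway, altering $a_n$ to its mod-$4$ complement still falsifies \eqref{eqlambda} at step $n$, and the corresponding cylinder in $\Sigma_m^\infty$ (now indexed by the single coordinate for level $n$ rather than by an edge position) has empty intersection with $\pi^{-1}(\cN_\lambda(C))$. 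Because this is a transparent simplification of the general case, the theorem follows and the authors are justified in omitting the details; I would simply record the identification of $(\cT,A)$ as a special $(\cT,L)$ and cite the proof of Theorem \ref{theo13} verbatim.
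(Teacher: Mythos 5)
Your proposal is correct and is essentially the paper's own (omitted) argument: the paper merely remarks that Theorem \ref{theomain1'} follows from Theorem \ref{theo13} and omits the details, and what you spell out --- the dichotomy of Proposition \ref{prop.2}, the decomposition $\cN(C)=\bigcup_{\lambda\in\Z}\cN_\lambda(C)$ via Corollary \ref{rem.1}, the incomplete-node argument, and Lemma \ref{lempoor} --- is exactly the intended specialization to constant-per-level labels. Your closing bookkeeping point is also the right one to insist on: since the repeated labels $a_1a_2a_2a_3a_3a_3a_3\cdots$ arising from special labels $A$ form a $\mu$-null subfamily of the general label space (and a special label read as a general one gives a different tree), the theorem does not follow by merely restricting the almost-everywhere statement of Theorem \ref{theo13}; one must rerun its proof, with the harmless modification that replacing $a_n$ by its mod-$4$ partner in $C$ now changes every right edge of level $n$ at once, which still falsifies the congruence \eqref{eqlambda} at step $n$ and yields the incomplete node.
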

    Now one can give the proofs of Theorems \ref{theomain1}  and   \ref{coro1}.

    \smallskip
    \begin{proof}[Proof of Theorem \ref{theomain1}]
    Let $\Lambda(\bi)$ and $\cN$ be the sets which are respectively defined as in \eqref{Lamsigma'} and \eqref{eq.intro.3}.
    Since the mapping  $\pi: \Sigma_m^\infty \rightarrow \cL$ is a bijection, it follows that for any $A=a_1a_2\cdots \in \cL$, there exists a  unique $\bi=i_1i_2\cdots$ such that $a_j=c_{i_j}$ for all $j \in \N$, that is,
    $$A=\pi(\bi)=c_{i_1}c_{i_2}\cdots,$$
    which is equivalent to say that $\Lambda(\bi)=\Lambda(A),$ where $\Lambda(A)$ is written as in \eqref{Lamsigma}, and hence $\cN=\pi^{-1}(\cN(C)).$ Therefore,
        $$\tilde{\mu}(\cN )=\tilde{\mu}\big(\pi^{-1}\cN(C) \big)={\mu}\big(\cN(C)\big)=0. $$
    This finishes the proof  of Theorem \ref{theomain1}.
    \end{proof}

     \begin{rema}
    It is not hard to see that the statements of Theorem \ref{theomain1'} and Theorem   \ref{theomain1} are equivalent.
     \end{rema}

    As a consequence of Theorem \ref{theomain1'} or Theorem   \ref{theomain1}, we obtain the proof of Theorem \ref{coro1}.

    \smallskip
    \begin{proof}[Proof of Theorem \ref{coro1}]
    %The first statement follows from  \cite[Theorem 4.3]{FHW2018}, and the sufficient part of the second statement follows from the definition of spectral eigenvalue and the first statement. Thus, it suffices to prove the necessary part of the second statement.

   For $p\in 2\Z+1$, we set $C=\{1,3\}$, $C_p:=pC=\{p,3p\}$.
    It is clear that $$C_p\equiv\{1,3\}\ppmod{4}.$$
    As in \eqref{eqmu} and \eqref{eq.18}, we define  an image measure $\mu_p$ on $C_p^\infty$ as $\mu_p(\cdot):=\tilde\mu\circ \pi_p^{-1}(\cdot)$, and define the natural projection  mapping $\pi_p: \Sigma_2^\infty=\{0,1\} ^\infty \rightarrow C_p^\infty$ as
    \begin{equation*}
    \pi_p(i_1i_2i_3\cdots)= c_{i_1}c_{i_2}c_{i_3}\cdots,
    \end{equation*}
    where $c_{i_j} \in C_p$ for all $j \in \N$.
  Then the set
    \begin{equation*}
    \cN(C_p)=\{A \in C_p^\infty: \text{$\Lambda(A)$ as in \eqref{Lamsigma}   is not a spectrum for $\mu_4$}\}
    \end{equation*}
    satisfies that $\pi_p^{-1}\big(\cN(C_p)\big) \subseteq \Sigma_2^\infty $ and $\tilde{\mu}\Big(\pi_p^{-1}\big(\cN(C_p)\big)\Big)=\mu\big(\cN(C_p)\big)=0$ by Theorem \ref{theomain1'}.

    It follows that the set
    $$\Xi=\bigcup_{p\in 2\Z+1} \pi_p^{-1}\big(\cN(C_p)\big)$$
    satisfies that $\Xi \subseteq \Sigma_2^\infty$ and $\tilde\mu(\Xi)=0$ by the sub-additivity of the measure $\tilde\mu$. This means that the complement of $\Xi$, denoted by $\Xi^{{c}}$, is uncountable, otherwise the continuity and countable additivity of $\tilde{\mu}$ will yield that $\tilde{\mu}(\Xi^c)=0$, and hence  $$\tilde{\mu}(\Sigma_2^\infty)=\tilde{\mu}(\Xi)+\tilde{\mu}(\Xi^c)=0.$$ It is a contradiction.
    Consequently, for each $\bi \not \in   \Xi$, one has that, for each $p\in 2\Z+1$,  the set $ \Lambda(\pi_p(\bi))$ is a spectrum for $\mu_4$. This completes the proof of Theorem \ref{coro1}.
    \end{proof}

Combining Theorem \ref{coro1} with Theorem \cite[Theorem 4.3]{FHW2018}, we obtain the following result.
\begin{coro}
  An integer $p \in \Z$ is a spectral eigenvalue of $\mu_4$ if and only if  the set
    $$    \mathop \cap_{p\in 2\Z+1}\{\Lambda\subset \R: \text{$\Lambda$ and $p\Lambda$ are all spectra for the measure $\mu_4$}\}$$ has the cardinality of the continuum
\end{coro}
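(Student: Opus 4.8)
The plan is to deduce this statement directly by combining the integer characterization of spectral eigenvalues in \cite[Theorem 4.3]{FHW2018} with the continuum cardinality established in Theorem \ref{coro1}. Write
$$\Omega:=\mathop\cap_{p\in 2\Z+1}\{\Lambda\subset\R:\ \Lambda\text{ and }p\Lambda\text{ are all spectra for }\mu_4\}$$
for the common spectral eigen-subspace appearing on the right-hand side. The two ingredients I would invoke are: (a) by \cite[Theorem 4.3]{FHW2018}, an integer $p$ is a spectral eigenvalue of $\mu_4$ if and only if $p\in 2\Z+1$; and (b) by Theorem \ref{coro1}, the set $\Omega$ has the cardinality of the continuum, so that in particular $\Omega\neq\emptyset$.

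For the forward implication, I would assume that the integer $p$ is a spectral eigenvalue of $\mu_4$. By (a) this forces $p\in 2\Z+1$, and then (b) is exactly the assertion that $\Omega$ has the cardinality of the continuum; no further work is needed, since the right-hand side does not depend on the particular choice of $p$.

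For the reverse implication, I would start from the hypothesis that $\Omega$ has the cardinality of the continuum. In particular $\Omega$ is nonempty, so I can fix a single set $\Lambda\in\Omega$. By the very definition of $\Omega$, this $\Lambda$ satisfies that $\Lambda$ and $p\Lambda$ are simultaneously spectra for $\mu_4$ for every $p\in 2\Z+1$; hence such a $\Lambda$ is a common eigen-spectrum witnessing that each odd integer is a spectral eigenvalue. Combining this with (a), which already pins down the integer spectral eigenvalues as precisely the odd ones, closes the equivalence.

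The point worth emphasizing is that nearly all of the work has already been carried out upstream. Theorem \ref{coro1} is what upgrades the mere existence of an eigen-spectrum for each fixed odd $p$ to the existence of continuum-many sets $\Lambda$ serving as eigen-spectra for \emph{all} odd $p$ simultaneously, and \cite[Theorem 4.3]{FHW2018} is what identifies the integer spectral eigenvalues. Thus the only genuine step in this corollary is the bookkeeping of the two directions; the substantive difficulty, namely producing a single $\Lambda$ that is a common eigen-spectrum for the entire family $\{p:\ p\in 2\Z+1\}$, has already been resolved as the content of Theorem \ref{coro1}.
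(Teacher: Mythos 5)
Your two ingredients are exactly the paper's: the paper gives no proof beyond the sentence ``Combining Theorem \ref{coro1} with \cite[Theorem 4.3]{FHW2018}, we obtain the following result,'' so in spirit your proposal reconstructs the intended argument. But your handling of the reverse implication contains a real gap, and making it explicit exposes a defect in the statement itself that your write-up glosses over.

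You correctly observe that the right-hand side does not depend on $p$: the $p$ appearing there is a bound variable of the intersection, and by Theorem \ref{coro1} the right-hand side is \emph{unconditionally true}. Consequently the literal biconditional ``for every integer $p\in\Z$: $p$ is a spectral eigenvalue of $\mu_4$ if and only if (RHS)'' asserts that \emph{every} integer is a spectral eigenvalue of $\mu_4$ --- which is false for even $p$, precisely by your ingredient (a). Your reverse direction only establishes that every \emph{odd} integer is a spectral eigenvalue; the closing sentence ``combining this with (a)\dots closes the equivalence'' cannot close anything, because for even $p$ the left side of the equivalence is false while the right side is true. No bookkeeping can fix this; what must be fixed is the statement. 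Either the right-hand side should be the eigen-subspace of the given $p$, namely $\{\Lambda\subset\R:\ \Lambda \text{ and } p\Lambda \text{ are both spectra for } \mu_4\}$ with $p$ free --- in which case your argument does go through: for $p$ a spectral eigenvalue, (a) forces $p\in2\Z+1$, so this set contains the common eigen-subspace $\Omega$ and hence has the cardinality of the continuum, while conversely continuum cardinality gives nonemptiness, which by definition makes $p$ a spectral eigenvalue --- or else the corollary should be stated as the conjunction of the two facts (a) and (b) rather than as a biconditional quantified over all $p\in\Z$. As written, both the corollary and your proof of its reverse half are incorrect for even integers.
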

    We end this section by giving a result which is analogous to Theorem \ref{theodim}. However, they are different from each other in the case that $N(C)$ is non-empty.
    \begin{theo}\label{theodim'}
    Assume that  $C:=\{c_0,\ldots,c_{m-1}\}\equiv\{1,3\}\ppmod4$,  and we write $\cN(C)$ as in \eqref{eq.6'}. Then the following statements hold.\\
    \indent$({\rm i})$ There exists $C$ such that $\cN(C)$ is empty.\\
    \indent$({\rm ii})$ The set $\cN(C)$ is an infinite set if $\cN(C)$ is not an empty set.\\
    \indent$({\rm iii})$ There exists $C$ such that $\cN(C)$ contains  countably  many elements.\\
    \indent$({\rm iv})$ For any $\ep>0$, there exists a set $C$ such that the Hausdorff dimension of $\cN(C)$ is greater than $1-\ep$, i.e., $\dim_H\cN(C)>1-\ep.$
    \end{theo}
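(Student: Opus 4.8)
\textbf{Proof proposal for Theorem \ref{theodim'}.}

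The plan is to handle the four statements by exhibiting or analyzing suitable digit sets $C$, reusing the mechanism that governs whether an integer admits an infinite quasi $4$-based expansion. The organizing principle throughout is Corollary \ref{rem.1}: a label $L$ lies in $\cN(C)$ precisely when some $\lambda \in \Lambda_I(L)\setminus\Lambda(L)$ has an infinite quasi $4$-based expansion with respect to $(\cT,L)$, and by the special-tree reduction of Subsection \ref{subsec.2.2} (Proposition \ref{prop.2}) we work with labels $A = a_1 a_2 \cdots \in C^\infty$ and the congruences $\lambda \equiv \sum_{i=1}^{n} 4^{i-1}\omega_i \pmod{4^n}$ with $\omega_i \in \{0, a_i\}$.

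For $({\rm i})$ I would simply invoke the computation already done in Example \ref{exam1}: taking $C = \{1,5\}$, the bound $0 \le \sum_{i=1}^{n-1} 4^{i-1}\omega_i < \frac{5}{3}4^{n-1}$ forces every digit $i_n$ of the $4$-based expansion of any $\lambda$ realized by an infinite path to lie in $\{1,2\}$, so $\lambda$ never ends in $0^\infty$ or $3^\infty$, contradicting Proposition \ref{lembase4}; hence $\cN(\{1,5\}) = \emptyset$. For $({\rm ii})$, suppose $\cN(C) \neq \emptyset$, so some $\lambda$ has an infinite quasi $4$-based expansion relative to a label $A$, with $I = \{n : a_n \in C\}$ infinite (here $a_n = \omega_n$ on the branch realizing $\lambda$). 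The key observation is that the membership of $A$ in $\cN(C)$ through this particular $\lambda$ depends only on the coordinates indexed by $I$: as in the proof of part $({\rm ii})$ of Theorem \ref{theodim}, I would form the set $\widetilde{\cN}_\lambda(C) = \{A' = a'_1 a'_2 \cdots : a'_n = a_n \text{ for all } n \in I\}$, observe that $\lambda$ satisfies the same congruences \eqref{eq.10} with respect to every such $A'$ (the coordinates outside $I$ are free and do not affect the branch carrying $\lambda$), and conclude $\widetilde{\cN}_\lambda(C) \subseteq \cN(C)$. Since $\widetilde{\cN}_\lambda(C)$ is infinite (indeed uncountable when $I \neq \N$; and one checks $I \neq \N$ is forced, or else deals with the full-support case separately), $\cN(C)$ is infinite.

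For $({\rm iii})$ the goal is a $C$ making $\cN(C)$ nonempty yet countable; here I expect the main difficulty. The idea is to choose $C = \{c_0, c_1\} \equiv \{1,3\}\pmod 4$ so that there is \emph{exactly one} (up to the finitely-many free initial coordinates) obstructing integer $\lambda$, forced into a \emph{rigid} infinite expansion in which the set $I$ is \emph{cofinite} — i.e.\ all but finitely many coordinates are pinned to specific values of $C$. If the infinite quasi-expansion of the unique bad $\lambda$ uses nonzero digits at all sufficiently large positions, then $\widetilde{\cN}_\lambda(C)$ varies only over finitely many free coordinates and is therefore finite; taking the union over the countably many shifts/truncations that still realize $\lambda$ gives a countable $\cN(C)$. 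Concretely one would search, analogously to Example \ref{exam2} (where $C=\{3,15\}$ forced $-1$ into a rigid expansion with $k_{n+1}$ alternating), for a $C$ satisfying \eqref{eqC13} under which some $\lambda$ (say $\lambda = -1$ or another small integer) has a periodic infinite expansion that occupies a cofinite index set, while Claims 1–2 style segment-forbidding arguments rule out any expansion with a \emph{sparse} nonzero support — sparseness being exactly what produces the uncountable $\widetilde{\cN}_\lambda(C)$. Verifying that no such sparse bad expansion exists for the chosen $C$ is the delicate combinatorial check, carried out by the $4$-based versus quasi $4$-based expansion comparison of Proposition \ref{prop.1} and Proposition \ref{lembase4}, exactly as in the proofs of Claims 1 and 2.

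For $({\rm iv})$ I would push the $\widetilde{\cN}_\lambda(C)$ construction in the opposite direction: enlarge $m = \#C$ so that the free coordinates outside the pinned set $I$ range over a large alphabet. If one can arrange a $C$ (with $m$ large) admitting a bad $\lambda$ whose nonzero support $I$ has upper density at most $\delta$, then $\widetilde{\cN}_\lambda(C) \subseteq \cN(C)$ is a subshift-type set whose Hausdorff dimension equals the proportion of free coordinates times $\log m$ normalized, which tends to $1$ as $\delta \to 0$ and $m \to \infty$; choosing parameters with $\delta$ small and $m$ large gives $\dim_H \cN(C) > 1 - \ep$. The clean way to make this rigorous is to realize $\widetilde{\cN}_\lambda(C)$ as a set of sequences with prescribed symbols on a set $I$ of density $d(I) \le \delta$ and arbitrary symbols elsewhere, so that $\dim_H \widetilde{\cN}_\lambda(C) = 1 - d(I) \ge 1 - \delta$ (with respect to the standard metric on $C^\infty$, cf.\ \cite{F1}); hence the main task is constructing, for each $\ep$, a valid $C$ together with an obstructing integer whose forced expansion is supported on a sparse index set, which one builds by taking $C$ to contain a pair $\{a, c\}$ with $a + c \equiv 0 \pmod 4$ spaced out by many ``free'' odd residues that never trigger an obstruction.
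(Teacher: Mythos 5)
Your proposal breaks down at several points, the most serious being parts (i) and (ii). For (i) you invoke Example \ref{exam1} with $C=\{1,5\}$, but $1\equiv 5\equiv 1\ppmod 4$, so this $C$ violates the standing hypothesis $C\equiv\{1,3\}\ppmod 4$ of the theorem; Example \ref{exam1} appears in the paper precisely to show that \eqref{eqC13} is sufficient but \emph{not necessary}, so it cannot serve as a witness inside the hypothesis class. A valid witness must satisfy \eqref{eqC13}; the paper uses $C=\{1,7\}$, and emptiness of $\cN(\{1,7\})$ requires the forbidden-segment analysis (Claims 1 and 2) of Theorem \ref{theodim}(i) --- nothing as soft as the $\{1,5\}$ computation. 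For (ii), your set $\widetilde{\cN}_\lambda(C)=\{A'=a_1'a_2'\cdots:\ a_n'=a_n \ \text{for all } n\in I\}$ lives in the special-tree setting of Subsection \ref{subsec.2.2}, where each level carries a \emph{single} label $a_n$; hence if $\N\setminus I$ is finite this set is finite, and if $I=\N$ it is the single point $\{A\}$. This degenerate case is not hypothetical and is not ``forced'' away: for $C=\{1,3\}$, $A=3^\infty$ and $\lambda=-1$ one has $\omega_n=3$ for every $n$, i.e.\ $I=\N$. (Your argument is borrowed from Theorem \ref{theodim}(ii), but there the tree is generically labeled, so pinning one edge per level out of $2^{n-1}$ still leaves full dimension; that computation does not transfer to labels in $C^\infty$.) The paper's proof of (ii) avoids this by perturbation: using the hypothesis, pick $a_n'\in C$ with $a_n+a_n'\equiv 0\ppmod 4$, flip the label at the first $\ell$ positions of $I$, and replace $\lambda$ by $\lambda_\ell$ obtained by subtracting from $\lambda$ the contribution $\sum 4^{i-1}\omega_i$ of those positions; then $\lambda_\ell$ has an infinite quasi $4$-based expansion with respect to the flipped label, producing infinitely many distinct elements of $\cN(C)$ in all cases, including $I=\N$.

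For (iii) and (iv) you describe strategies but explicitly defer the constructions (``the delicate combinatorial check'', ``the main task is constructing \dots a valid $C$''), so these parts are not proved. The paper's witnesses are in fact simple. For (iii) take $C=\{1,3\}$: since all admissible digits then lie in $\{0,1,3\}\subset\Sigma_4$, every quasi $4$-based expansion is a genuine $4$-based expansion, so by Proposition \ref{lembase4} an infinite one must end in $3^\infty$; this forces every label of $\cN(C)$ to be eventually $3$, a countable set, while $-1\equiv\sum_{j=1}^n 4^{j-1}\cdot 3\ppmod{4^n}$ shows $3^\infty\in\cN(C)\neq\emptyset$, and infiniteness follows from (ii). For (iv) take $C=\{1,c_1\}$ with $c_1=3\sum_{n=0}^r 4^n$ and $r^{-1}<\ep$: pinning the digit $c_1$ at the positions of an arithmetic progression of common difference $r$ realizes $-1$ as an infinite quasi $4$-based expansion for \emph{every} such label, and this pinned set $\Gamma\subset\cN(C)$ has Hausdorff dimension $1-1/r>1-\ep$. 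Your density heuristic for the dimension is consistent with this, but the existence of the sparse obstruction --- which is the whole content of (iv) --- is exactly what you left unconstructed.
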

    \begin{proof}
    The statement  $({\rm i})$   immediately follows from Theorem \ref{theodim} $({\rm i})$ by setting $C=\{1, 7\}$. One can also refer to \cite[Example 2.6]{F2019}  for an alternative proof of $({\rm i})$.

    \smallskip
    $({\rm ii})$
     If $\cN(C)$ is not an empty set, by Proposition \ref{prop.2} and \eqref{eq.6'}, there is a label
     $$A:= a_{1 }a_{2 }a_{ 3}a_{4}  \cdots \in  \cN(C),$$
     and an integer  $\lambda\in \Lambda_I(A)\setminus \Lambda(A)$ such that $\lambda$ has the following unique infinite quasi $4-$based expansion  with respect to the labeled tree $(\mathcal{T}, A)$,
     \begin{equation}\label{eq.10'}
     \lambda\equiv \sum_{i=1}^{n}4^{i-1} \omega_{i }\ppmod{4^n},\qquad (n\ge1),
     \end{equation}
     where $\omega_{i }\in \{0, a_i\}$
     and the set  $I=\{n:\,\omega_{n }\in C\}$   has infinity many elements.       List $I$   as the form of $\{n_1,n_2,\ldots\}$ in order of size.

    Recall that the assumption $C:=\{c_0,\ldots,c_{m-1}\}\equiv\{1,3\} ~\ppmod4$ implies that for each $n  \in \N$ there exists $a'_{n }\in C$ such that $a'_{n} \not = a_{n}$ and $a'_{n} + a_{n} \equiv 0 ~~\ppmod4$. Thus, associated to $I$  and the label $A\in \cL$ above, one can construct a sequence of labels $A_\ell ~(\ell \in \N)$ as follows,
 \begin{equation*}
  A_\ell:= b_{1 }b_{2 }b_{ 3}b_{4}  \cdots \in  C^\infty,
    \end{equation*}
    where
     \begin{eqnarray*}
    b_{n }=
 \begin{cases}
 a'_{n},   &  \text{if   $n  \in I$};\\
 a_{n},  &  \text{others}.
 \end{cases}
   \end{eqnarray*}
Now we show that $L_\ell \in \cN(C)$ for all $\ell \in \N$. For this, we consider the integer
$$\lambda_\ell:=\lambda-\sum_{i=1}^{\ell}4^{i-1} \omega_{i}.$$
  One can easily check, from \eqref{eq.10'}, that
  $$ \lambda_\ell \equiv \sum_{j=1}^{n}4^{j-1}\omega'_{i}\pmod{4^n} \ \mbox{ for all } n\ge1, $$ where
  $$\omega'_{i}=\begin{cases}               0,& i  \in \{ n_1,  \ldots,  n_\ell  \};\\               \omega_{i},&\mbox{otherwise}.      \end{cases}$$
This is equivalent to that the integer $\lambda_\ell$ has an infinite quasi $4-$based expansion $\omega'_{1}\omega'_{2}\cdots     $ with respect to the labeled tree  $(\mathcal{T}, L_\ell)$, whence, $L_\ell \in \cN(C)$. From our above construction, $L_k\ne L_t$ if $k\ne t$, which concludes that $\cN(C)$ is an infinite set.

    \smallskip
    ${\rm(iii)}$
     Let $C=\{1,3\}$. We first claim that $\cN(C)$ is not an empty set. Indeed, it is easy to check that
     $$-1 \equiv \sum_{j=1}^{n}4^{j-1} 3 \quad  \pmod{4^n} \ \mbox{ for all } n\ge1.$$
     This property yields that
     $-1 \in \Lambda_I(A)\setminus \Lambda(A)$, where
     $$\Lambda(A):=\left\{\sum_{i=0}^n 4^i c_i : c_i \in \{0, 3\}\right\}.$$
    Thus, $A=333\cdots \in \cN(C)$ by Proposition \ref{prop.2}, the claim follows.

    On the other hand, since $C\subseteq \Sigma_4=\{0,1,2,3\}$, it follows from   Lemma  \ref{lembase4} that, for any label $A$,  the associated  infinite quasi $4-$based expansion $\bi=i_1i_2i_3\cdots$   of any integer must end in $3^\infty$.
    In other words,   $\cN(C)$ is a subset of the countable set
    $$\{\bi*3^\infty:\,\bi\in\Sigma_3^*\}.$$
    Thus, the desired result {\rm(iii)} follows from {\rm(ii)}.
    This completes the proof of {\rm(iii)}.

    ({\rm iv})
    For any $0<\ep<1$, we choose a positive integer $r$   such that $r^{-1}<\ep$.
    We consider the set $C=\{c_0,c_1\}$, where $c_0=1$ and $c_1=3\sum_{n=0}^r4^n$. Let
    $$\Gamma=\{a_1a_2\cdots  a_n \cdots \in C^\infty:\, a_{nr+1}=c_1, n\ge1\}.$$
    On the one hand, it follows from \cite{F1} that
    $$\dim_H\Gamma=\dfrac{r-1}r=1-\dfrac1r>1-\ep.$$
    On the other hand, for each $A \in \Gamma$, with simple calculations, one can check that
     $$-1\equiv\sum_{k=1}^n 4^{k-1}\omega_k\pmod{4^n}, \qquad\text{ for all $n \geq 1$},$$
     where
    $$\omega_k=\begin{cases}
                c_1,&k=nr +1,n\ge 0;\\
                0,&\mbox{otherwise}.
                \end{cases}.$$
    This means that $-1 \in \Lambda_I(A)\setminus \Lambda_I(A)$
    has an infinite quasi $4-$based expansion $a_1a_2\cdots$. By Proposition \ref{prop.2}, one has that $\Gamma\subset \cN(C)$, which implies the conclusion in ({\rm iv}).
    \end{proof}
     \begin{rema}
            From Theorem \ref{theodim'}, we get that the Hausdorff dimension of $N(C)$ defined as in \eqref{eq.6'} can approach 1 if we choose suitable integer subset $C$. However, we can not find an example $C$ such that $\dim_HN(C)=1$. So, we have a conjecture that $\dim_HN(C)<1$ for all finite integer subset $C$ with $C\equiv\{1,3\}\ppmod4.$
    \end{rema}

\section{Construction of eigen-spectra for middle-fourth Cantor measure}
\label{sec.eigenvalue}
     The first aim of the present section is to seek a method to construct {\it common eigen-spectrum} for $\mu_4$ corresponding to all spectral eigenvalues $p\in2\Z+1$, and the second aim is to construct an explicit   eigen-spectrum $\Lambda$ for $\mu_4$ such that its any odd scaling set is still  a spectrum for $\mu_4$.   The main result Theorem \ref{lemkey} will provide a construction principle, and Theorem \ref{thm.2} will construct such a common eigen-spectrum.

   Our key strategy is to check under what conditions the set $\Lambda(A_p)$ (see  \eqref{eq.4.1}) is a spectrum  for $\mu_4$. More exactly, by setting $p \in 2\Z+1$ and $\cL_p:=C_p^\infty,$ where $C_p=\{-p, p\},$ one has that each
    \begin{equation}\label{eq.19'}
        A_p=a_1 a_2 a_3\cdots \in \cL_p, \qquad \text{ with} \qquad a_j \in C_p,
    \end{equation}
     corresponds to a special labeled tree $(\mathcal{T}, A_p)$ described in Subsection \ref{subsec.2.2}, we will then consider
    \begin{gather}\label{eq.4.1}
    \Lambda(A_p):=\left\{\sum_{k=1}^n   4^{k-1}  \omega_k: \omega_k \in \{0, a_k\}\right\}.
    \end{gather}
       Remembering that \cite[Proposition 4.5]{DHS2009} showed that, for each label $A_1\in \cL_1$, the set $\Lambda(A_1)$ is a spectrum for $\mu_4$.
    Notice that $A_1=a_1a_2a_3 \cdots \in \cL_1$ and  $A_p=(pa_1) (p a_2)(p a_3) \cdots \in \cL_p$ imply that $\Lambda(A_p)=p\Lambda(A_1)$.
    Therefore, in order to construct a {\it common eigen-spectrum} of $\mu_4$ corresponding to all $p \in 2\Z+1$, the first and foremost issue is  to determine under what conditions the set $\Lambda(A_p)$ will be a spectrum for $\mu_4$. %, where $p \in 2\Z+1$.

    The completeness of $\Lambda(A_p)$ defined in \eqref{eq.4.1} can be characterized as follows.

    \begin{theo}\label{lemkey}
        With the same notations above, $\Lambda(A_p)$ is not a spectrum for the measure $\mu_4$ if and only if there is an integer $\lambda$ such that the quasi $4-$based expansion of $\lambda$ with respect to the label $(\cT, A_p)$ is infinite and ultimately periodic, namely, there is a path $\omega_1\omega_2\omega_3\cdots\in\{-p,0,p\}^\infty$ without ending in $0^\infty$, such that
        $$
        \lambda\equiv\sum_{k=1}^{n} 4^{k-1}\omega_k \ppmod{4^n}
        $$
        holds for all $n \in \N$, where $\omega_k\in\{0, a_k\}, k \ge1.$
    \end{theo}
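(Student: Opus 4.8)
We're dealing with the special labeled tree $(\cT, A_p)$ where every right edge at level $k$ carries the label $a_k \in \{-p, p\}$, and $\Lambda(A_p)$ consists of integers with *finite* quasi $4$-based expansions. The claim characterizes failure of spectrality via the existence of an integer $\lambda$ with an *infinite* quasi $4$-based expansion that is moreover *ultimately periodic*. Since $\omega_k \in \{0, a_k\}$ and $a_k \in \{\pm p\}$, we have $\omega_k \in \{-p, 0, p\}$.

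Let me think about the structure.

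**The forward direction (easy-ish).** By Proposition \ref{prop.2} (or Corollary \ref{rem.1}), $\Lambda(A_p)$ fails to be a spectrum iff there exists $\lambda \in \Lambda_I(A_p) \setminus \Lambda(A_p)$, i.e., $\lambda$ has an *infinite* quasi $4$-based expansion $\omega_1\omega_2\cdots$ with respect to $(\cT, A_p)$. The "infinite" means $\#\{k : \omega_k \neq 0\} = \infty$ (that's \eqref{eqkiinfinite}). So the content of the theorem is the extra word "ultimately periodic."

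So actually what needs proving is: *if* there's an infinite quasi $4$-based expansion, *then* there's one that is ultimately periodic. And conversely, an ultimately periodic infinite expansion is in particular infinite, giving failure of spectrality.

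**Wait — let me re-read.** The theorem says: $\Lambda(A_p)$ is not a spectrum iff there is an integer $\lambda$ with quasi $4$-based expansion that is infinite AND ultimately periodic.

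The backward direction: if such a $\lambda$ exists with an infinite (hence not ending in $0^\infty$) expansion, then $\lambda \in \Lambda_I(A_p) \setminus \Lambda(A_p)$, so by Corollary \ref{rem.1}, $\Lambda(A_p)$ is not a spectrum. This is immediate from Corollary \ref{rem.1}. The "ultimately periodic" part is unused for the backward direction.

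The forward direction: if $\Lambda(A_p)$ is not a spectrum, then there's some $\lambda$ with an infinite quasi $4$-based expansion. We need to upgrade "infinite" to "infinite and ultimately periodic."

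**The key insight — boundedness gives periodicity.** Here's the crucial structural fact. Consider the integer $\lambda$ with infinite expansion $\omega_1\omega_2\cdots$, $\omega_k \in \{-p, 0, p\}$. Look at the partial sums / remainders. Define
$$r_n := \lambda - \sum_{k=1}^{n} 4^{k-1}\omega_k.$$
The congruence \eqref{eqlambda} says $4^n \mid r_n$, so $s_n := r_n / 4^n$ is an integer. The recursion relating $s_n$ and $s_{n+1}$: since $r_n = r_{n+1} + 4^n \omega_{n+1}$, we get $4^n s_n = 4^{n+1} s_{n+1} + 4^n \omega_{n+1}$, i.e.,
$$s_n = 4 s_{n+1} + \omega_{n+1}, \quad\text{so}\quad s_{n+1} = \frac{s_n - \omega_{n+1}}{4}.$$

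Now the heart: **the sequence $s_n$ is bounded.** Since $|\omega_k| \le p$ and it's a "base-4 tail," $|s_n| = |r_n|/4^n$. We have $r_n \equiv \lambda \pmod{?}$... Let me bound $s_n$ directly. From $s_n = 4s_{n+1} + \omega_{n+1}$ and $|\omega| \le p$, if $|s_{n+1}| \le M$ then $|s_n| \le 4M + p$ — that goes the wrong way. Instead solve forward: $s_{n+1} = (s_n - \omega_{n+1})/4$, so $|s_{n+1}| \le (|s_n| + p)/4$. This is a contraction toward the fixed region $|s| \le p/3$. Hence $|s_n|$ is eventually bounded by, say, $p/3 + 1$, so the integers $s_n$ take finitely many values.

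**Pigeonhole $\Rightarrow$ periodicity.** Since $\{s_n\}$ takes finitely many integer values, some value $s^*$ recurs: $s_{N} = s_{N'}$ with $N < N'$. But here's the subtlety: knowing $s_N = s_{N'}$ does *not* immediately force periodicity of the $\omega$'s going forward, because $\omega_{n+1} \in \{0, a_{n+1}\}$ depends on the level $n+1$ and $a_{n+1}$ may vary with $n$. This is where the special structure $a_k \in \{-p, p\}$ and the ambiguity in choosing $\omega$ must be exploited.

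**Constructing a periodic expansion.** The clean way: I want to build a *new* integer $\lambda^*$ (or reuse $\lambda$) with a genuinely ultimately periodic expansion. Given the recurrence $s_{n+1} = (s_n - \omega_{n+1})/4$ with $\omega_{n+1} \in \{0, a_{n+1}\}$, note that for each state $s$ and each level, the admissible next states are determined by $s \pmod 4$ matching $\omega \pmod 4$ — and since $a_k \equiv \pm p \pmod 4$ with $p$ odd, the choice $\omega \in \{0, \pm p\}$ is forced by the residue $s \bmod 4$. The plan: the hard part is that the *labels* $a_k$ vary. I'll argue that once $s_n$ enters the bounded region and a value repeats, I can extract a constant integer $\lambda^*$ whose expansion *is* periodic — by choosing $\lambda^* = s^*$ itself (the recurring fixed value), which has an expansion that cycles. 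Concretely, $s^*$ satisfies $s^* = 4 \cdot s^* \cdot (\text{no})$... rather, I build $\lambda^*$ from the periodic cycle of states. Since there are finitely many states and the transition at each level is essentially governed by the bounded set of labels, I can extract a finite cycle of (state, $\omega$)-pairs that closes up, and the corresponding $\lambda^*$ (the value of the repeating state, transported to level $0$) has the periodic expansion obtained by repeating that cycle.

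**Main obstacle.** The genuine difficulty is precisely bridging "a state repeats" to "an *ultimately periodic* expansion for some integer exists," given that the labels $a_k$ along the path are not assumed periodic. I expect the resolution is that one does *not* need the original path to be periodic; rather, once $s_N = s_{N'}$, the block $\omega_{N+1}\cdots\omega_{N'}$ together with the equality of states shows that $s_N$ (viewed as an integer $\lambda^*$ with its *own* expansion read off by repeating this block) satisfies the required congruences for *all* $n$ against a periodic label — but this needs the labels on the repeated block to also match, which requires choosing the repetition so that the labels $a_{N+1}\cdots a_{N'}$ recur. Since $a_k \in \{-p,p\}$ (only two values), one applies pigeonhole on the *pair* (state $s_n$, label $a_{n+1}$) — finitely many combinations — and iterates to find indices $N < N'$ with $s_N = s_{N'}$ *and* $a_{N+j} = a_{N'+j}$ for a full block; then repeating that block verbatim yields a legitimate infinite ultimately periodic quasi $4$-based expansion of the integer $\lambda^* := s_N$, completing the forward direction.

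Thus the plan is: (1) reduce via Corollary \ref{rem.1} to producing an infinite \emph{ultimately periodic} expansion from any infinite one; (2) set up the remainder recursion $s_{n+1} = (s_n - \omega_{n+1})/4$ and prove $\{s_n\}$ is eventually bounded by contraction; (3) apply pigeonhole to the finitely many admissible (state, label)-configurations to find a repeating block; (4) repeat that block to manufacture the ultimately periodic expansion and read off the corresponding integer $\lambda$.
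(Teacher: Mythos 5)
Your overall strategy is sound and genuinely different from the paper's. You correctly reduce the theorem via Corollary \ref{rem.1}/Proposition \ref{prop.2} to the claim that any \emph{infinite} quasi $4$-based expansion is automatically \emph{ultimately periodic}, and your remainder dynamics $s_n := \bigl(\lambda - \sum_{k=1}^{n}4^{k-1}\omega_k\bigr)/4^n$, with $s_{n+1} = (s_n - \omega_{n+1})/4$ and the contraction bound $|s_{n+1}| \le (|s_n|+p)/4$ forcing the integer sequence $\{s_n\}$ into a finite set, is a more elementary route than the paper's. The paper instead works inside the symbolic space $\Sigma_4^\infty$ with $4$-adic-style arithmetic: it shows that the $4$-based expansion $\bi$ of $\lambda$ equals $h_p(\omega)$ (Lemma \ref{lemisum}, Lemma \ref{lempi2}), then uses $h_p(\omega)=p\,h_1(p^{-1}\omega)$ and the closure of ultimately periodic sequences under addition and division by odd integers (Lemma \ref{lem.1}) to transfer the ultimate periodicity of $\bi$ (which ends in $0^\infty$ or $3^\infty$) to $\omega$ itself.

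However, your final step contains a genuine gap. Pigeonhole applied to the pairs $(s_n, a_{n+1})$ yields two indices at which a single \emph{pair} repeats; it cannot yield indices $N<N'$ with $s_N=s_{N'}$ \emph{and} $a_{N+j}=a_{N'+j}$ for an entire block $j=1,\dots,N'-N$, since blocks of unbounded length range over infinitely many possibilities. Moreover, even granting such a match, your construction of a new integer $\lambda^*:=s_N$ whose expansion is obtained by ``repeating that block verbatim'' produces an expansion with respect to a shifted and periodized label sequence, not with respect to the given label $A_p$ — and the theorem requires the ultimately periodic expansion to be a quasi $4$-based expansion w.r.t.\ $(\cT, A_p)$ itself. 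As written, this step fails.

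The repair is contained in an observation you already made but did not exploit: the value of $\omega_{n+1}$ is completely determined by $s_n \bmod 4$, \emph{independently of the label}. Indeed $\omega_{n+1}\equiv s_n \ppmod 4$ and $\omega_{n+1}\in\{0,a_{n+1}\}\subseteq\{-p,0,p\}$; since $p$ is odd, $p\not\equiv -p \ppmod 4$, so $\omega_{n+1}=0$ when $s_n\equiv 0\ppmod 4$, $\omega_{n+1}$ is the unique element of $\{-p,p\}$ congruent to $s_n$ when $s_n$ is odd (the case $s_n\equiv 2 \ppmod 4$ is impossible), and at the odd steps the consistency $a_{n+1}=\omega_{n+1}$ holds automatically because the given expansion of $\lambda$ exists. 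Hence $s_{n+1}=f(s_n)$ is a deterministic function of $s_n$ alone, so a single repetition $s_N=s_{N'}$ (available by pigeonhole once $\{s_n\}$ is bounded) forces $s_{n+(N'-N)}=s_n$ for all $n\ge N$ and therefore $\omega_{n+(N'-N)}=\omega_n$ for all $n>N$: the \emph{original} expansion of the \emph{original} $\lambda$, with respect to the original label $A_p$, is ultimately periodic. With this replacement for your step (3)--(4), your proof is complete and correct.
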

    The sufficient part of Theorem \ref{lemkey} is clear since the assumption implies that the set $\Lambda(A_p)$ is a proper subset of $\Lambda_I(A_p)$, where
    \begin{gather}
      \label{eq.4.2}
    \Lambda_I(A_p)=\Bigg\{\lambda\in \Z:\,\lambda\equiv\sum_{k=1}^{n} 4^{k-1}\omega_k \ppmod{4^n} \mbox{ for all }n\ge1,
    \omega_k\in\{0, { a_k}\}\Bigg\}.
    \end{gather}
    Consequently, $\Lambda(A_p)$ is not a spectrum for $\mu_4$, since $\Lambda_I(A_p)$ is a spectrum for $\mu_4$ by Proposition \ref{prop.2}.

     The proof of the necessary part of Theorem \ref{lemkey} needs us to
     make some preparations in the first four Subsections \ref{sec.4.1},  \ref{sec.4.2},  \ref{sec.4.3},  \ref{sec.4.4}, and then we give its detailed proof in Subsection  \ref{sec.4.5}.

    \subsection{Algebraic operations on symbolic space and their basic properties}\label{sec.4.1}

    The algebraic operations on the symbolic space $\Sigma_m^\infty$ ($2 \leq m \in \N$) will be defined in this subsection. This  is analogous to that of the ring $\Z_p$ of $p$-adic integers, where $p$ is a prime, for instance, see \cite[Chapter 1]{R2000}. We have to say that the algebraic operations on the symbolic space in this subsection maybe  well-known to many researchers, but we did not find them in the existing literature. So we provide them here for completeness.

    For any two fixed ${\bf{i}}=i_1i_2\cdots, \  {\bf{j}}=j_1j_2\cdots  \in \Sigma_{m}^{\infty}$, the {\it sum}  $\textbf{i}+\textbf{j}$  of ${\bf{i}}$ and $\bf{j}$ is defined   componentwise  using the system of carries to keep them in the range $\Sigma_m.$ More precisely, for $n\in \N$,
    the $n$th component of   $\textbf{i}+\textbf{j}$ can be computed by the following formula
    $$
    i_n+j_n+k_{n-1}-mk_n,
    $$
    where
        \begin{equation*}
    k_n=
    \begin{cases}
    0& ~{\rm if}~n=0,\\
    \left[\frac{i_{n}+j_{n}+k_{n-1}}m\right] \in\{0,1\} & ~{\rm if}~n\ge 1.
    \end{cases}
    \end{equation*}
    In particular, the first component of   $\textbf{i}+\textbf{j}$ is $i_1+j_1$ if it is less than or equal to $m-1$, or $i_1+j_1-m$ otherwise.

    For  ${\bf{i}}=i_1i_2\cdots \in \Sigma_{m}^{\infty}$ with
    $\bi \not = 0^\infty$, the {\it additive  inverse} of $\bf{i}$ is defined by
    $$-\bi=0^{n-1}(m-i_n)(m-1-i_{n+1}) (m-1-i_{n+2})\cdots \in \Sigma_{m}^{\infty},$$
    where $n=\min\{k\in\N: i_k\not=0\}$.
    Thus, it makes sense to define the  \textit{difference} of  $\bi$ and $\bj$ in $\Sigma_m^\infty$ as
    $$ \bi-\bj =\bi +(-\bj).$$
    Obviously, the symbolic space $\Sigma_m^\infty$ with addition operation is an Abelian group, in which the     {\it neutral  element} is the zero sequence $0^\infty$ since $\bi+(-\bi)=0^\infty$ holds for each $\bi \in \Sigma_{m}^{\infty}.$

    For any positive integer $a \in \N$ and $\bi=i_1i_2\cdots \in\Sigma_m^\infty$, the {\it scalar product} of $a$ and $\bi$ is defined by
    $$
    a\bi:=\underset{a \text{ times}}{\underbrace{\bi+\bi+\cdots +\bi}}.
    $$
    Similarly, for any negative integer $a \in -\N$, the {\it scalar product} of $a$ and $\bi$ is defined by
        $$
    a\bi:=\underset{-a \text{ times}}{\underbrace{-\bi-\bi-\cdots -\bi}}.
        $$
    Clearly, $-(a\bi)=(-a)\bi=a(-\bi)$ holds for all $a \in \N$.
    Moreover,  $\bj \in\Sigma_m^\infty$ is called the \textit{quotient} of $\bi\in \Sigma_m^\infty $ and a non-zero integer $a \in \Z$  if $a\bj=\bi,$   denote it by  $\bj:=\frac1a\bi$ or $\bj:=a^{-1}\bi$.

    We provide an example to illustrate the algebraic operations in terms of $\Sigma_4^\infty$.
    \begin{exam}
    The sum of $2130\cdots$ and $3211\cdots$ in $\Sigma_4^\infty$ is displayed as follows:
    \begin{equation}\label{1}
        \begin{matrix}
            &2&  &1&  &3 &  & 0&&\cdots \\
           +&3&_1&2&_1&1 &_1& 1&&\cdots\\ \hline
          = &1 &  &0 &  & 1&& 2&&\cdots
        \end{matrix}
    \end{equation}
    The difference of $2130\cdots$ and $3211\cdots$ is displayed as follows:
    \begin{equation}\label{1}
        \begin{matrix}
          & 2 & &1 & & 3 &&0&&\cdots \\
        - & 3 & &2 & & 1 &&1&&\cdots\\  \hline
        + &1&_0&1&_0&2&_1&2&&\cdots\\ \hline
         =& 3 &  &2 &  & 1 &   &3&&\cdots
        \end{matrix}
    \end{equation}
    The product of $3$ and $32^\infty$ is displayed as follows:
    $$
    \begin{matrix}
    &3& &2&&2&&2&&\cdots\\
    \times&3&\quad_2&&\quad_2&&\quad_2&\\ \hline
    =&1&&0&&0&&0&&\cdots
    \end{matrix}
    $$
    So, $\frac13(10^\infty)$ coincides with $3 2^\infty$.
    \end{exam}
    The basic properties of algebraic operations of $\Sigma_m^\infty$ are stated as follows. These will be used to prove Lemma \ref{lemisum} below and Theorem \ref{lemkey}.

        \begin{prop}\label{prop1}
    With the algebraic operations on $\Sigma_m^\infty$, the following statements hold. \\
     \indent  $({\rm i})$
    Let $a\in \Z$ be a non-zero integer and
    ${\bf{i}}=i_1i_2\cdots \in\Sigma_m^\infty$. Then, for each $n \in \N$, the $n$th component of the scalar product $a\bi$ is the unique number $x_n\in\Sigma_m$ such that
    $$x_n=ai_n+k_{n-1}-mk_n,$$
    where $k_0=0$ and $k_n=[m^{-1}(ai_n+k_{n-1})]$. \\
     \indent  $({\rm ii})$
    The distributive law of scalar product on $\Sigma_m^\infty$ holds, i.e., for $k,l\in\Z$ and for $\bi,\bj\in\Sigma_m^\infty$,
    $$k(\bi+\bj)=k\bi+k\bj,\quad (k+l)\bi=k\bi+l\bi.$$
    \indent  $({\rm iii})$  For any positive integer $k\in\N$ and $\bi\in \Sigma_m^\infty,$ one has $m^k\bi=0^k*\bi$. \\
     \indent $({\rm iv})$ Let $\rho$ be the metric on $\Sigma_m^\infty$ defined by
            \begin{equation}\label{eq.12}
                \rho(\bi,\bj)=m^{-\inf\{n:\,i_n\neq j_n\}},
            \end{equation}
     where   $ \bi=i_1i_2\cdots, \bj=j_1j_2\cdots\in\Sigma_m^\infty$. Then for any $\bi, \ \bj,  \ \widetilde{\bi}, \ \widetilde{\bj} \in\Sigma_m^\infty$ with $\rho(\widetilde{\bi}, \ \widetilde{\bj})<\rho(\bi, \ \bj)$, one has that
            $$\rho(\bi+\widetilde{\bi}, ~ \bj+\widetilde{\bj})=\rho(\bi,\bj).$$
     \indent  $({\rm v})$  Let $a\in 2\Z+1$.
        Then for each ${\bf{i}}=i_1i_2\cdots \in\Sigma_m^\infty$, the quotient of $\bi$ and $a$ exists and is unique. Consequently, the following map
            \begin{equation}\label{eq.fa}
                f_a(\bi)=a\bi
            \end{equation}
            is a bijection from $\Sigma_m^\infty$ onto $\Sigma_m^\infty$.
        \end{prop}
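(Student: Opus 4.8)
The backbone of all five parts is a single compatibility principle between the carry arithmetic on $\Sigma_m^\infty$ and ordinary arithmetic modulo $m^n$. For $\bi=i_1i_2\cdots\in\Sigma_m^\infty$ and $n\ge1$, write $\phi_n(\bi)=\sum_{\ell=1}^n i_\ell m^{\ell-1}$, viewed in $\Z/m^n\Z$. The plan is to first establish the \textbf{Key Lemma}: $\phi_n(\bi+\bj)\equiv\phi_n(\bi)+\phi_n(\bj)\pmod{m^n}$ for all $\bi,\bj$ and all $n$. This rests on the fact that the carry recursion $k_\ell=[(i_\ell+j_\ell+k_{\ell-1})/m]$ is \emph{forward-only}: the first $n$ output digits $x_1,\dots,x_n$ depend only on $i_1,\dots,i_n$ and $j_1,\dots,j_n$, and weighting the digit identity $x_\ell+mk_\ell=i_\ell+j_\ell+k_{\ell-1}$ by $m^{\ell-1}$ and summing telescopes to $\phi_n(\bi)+\phi_n(\bj)=\sum_{\ell=1}^n x_\ell m^{\ell-1}+k_n m^n$, which is the assertion. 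Combined with the separation property (since $\phi_n(\bi)$ recovers $i_1,\dots,i_n$ by uniqueness of base-$m$ digits, $\bi=\bj$ iff $\phi_n(\bi)=\phi_n(\bj)$ for all $n$), this identifies $(\Sigma_m^\infty,+)$ with the projective limit of the rings $\Z/m^n\Z$ (the $m$-adic integers hinted at in the text) and reduces every identity below to a congruence mod $m^n$. The one genuinely delicate point is this Key Lemma, i.e.\ pinning down that carries propagate only forward so that the truncated computations are consistent as $n\to\infty$; everything else is bookkeeping.

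Granting the Key Lemma, parts (i)--(iii) follow quickly. For (i), the same telescoping applied to the single-input recursion $x_\ell+mk_\ell=a i_\ell+k_{\ell-1}$ yields $a\,\phi_n(\bi)=\sum_{\ell\le n}x_\ell m^{\ell-1}+k_n m^n$, so the digits produced by the stated formula realize $a\,\phi_n(\bi)\bmod m^n$; since repeated addition gives $\phi_n(a\bi)=a\,\phi_n(\bi)$ by the Key Lemma (negative $a$ being handled through $\bi+(-\bi)=0^\infty$, which forces $\phi_n(-\bi)=-\phi_n(\bi)$), the two coincide for every $n$ and hence as sequences. One checks in passing that the carries stay bounded by $|a|$, so the recursion is well defined even though carries may exceed $1$. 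Part (ii) is immediate: $\phi_n(k(\bi+\bj))=k\phi_n(\bi+\bj)=k\phi_n(\bi)+k\phi_n(\bj)=\phi_n(k\bi+k\bj)$ in $\Z/m^n\Z$, and likewise $\phi_n((k+l)\bi)=\phi_n(k\bi+l\bi)$, so separation gives equality. Part (iii) follows either from (i) with $a=m$ (which prepends one $0$, then iterate) or directly from $\phi_n(m^k\bi)\equiv m^k\phi_{n-k}(\bi)\equiv\phi_n(0^k*\bi)\pmod{m^n}$, the discarded terms carrying exponent $\ge n$.

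Part (iv) I would prove directly from the forward-only carry property, bypassing the Key Lemma. Let $N=\inf\{n:i_n\ne j_n\}$. Since $\bi,\bj$ agree on positions $1,\dots,N-1$ and $\widetilde\bi,\widetilde\bj$ agree on positions $1,\dots,N$ (as $\rho(\widetilde\bi,\widetilde\bj)<m^{-N}$), the two additions $\bi+\widetilde\bi$ and $\bj+\widetilde\bj$ run identically through position $N-1$, producing the same digits there and the same carry $c$ into position $N$. At position $N$ the output digits are $(i_N+\widetilde i_N+c)\bmod m$ and $(j_N+\widetilde j_N+c)\bmod m$; because $\widetilde i_N=\widetilde j_N$ while $i_N\ne j_N$ in $\{0,\dots,m-1\}$, these two values differ, so $N$ is exactly the first disagreement and $\rho(\bi+\widetilde\bi,\bj+\widetilde\bj)=m^{-N}=\rho(\bi,\bj)$.

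Finally, for (v) I would construct the quotient $\bj=a^{-1}\bi$ digit by digit. Comparing with the formula of (i), the requirement $a\bj=\bi$ forces at stage $n$ the congruence $a\,j_n\equiv i_n-k_{n-1}\pmod m$, with $k_{n-1}$ the carry already determined; this has a unique solution $j_n\in\Sigma_m$ because $a$ is invertible modulo $m$ (for odd $a$ this holds whenever $m$ is a power of two, in particular for the base $m=4$ governing the $4$-based expansions), after which $k_n=[(a j_n+k_{n-1})/m]$ is forced and bounded. This yields a well-defined $\bj$ with $a\bj=\bi$, giving existence; uniqueness is the injectivity of $f_a$, seen by taking $\bw\ne0^\infty$ with first nonzero digit $w_N$ and noting that the $N$th digit of $a\bw$ equals $(a\,w_N)\bmod m\ne0$, so $a\bw\ne0^\infty$. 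Hence $f_a$ is a bijection of $\Sigma_m^\infty$ onto itself, and the only real work across the whole proposition remains the forward-carry consistency underlying the Key Lemma.
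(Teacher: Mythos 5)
Your proposal is correct, and it actually supplies more than the paper does: where the paper dismisses (i)--(iii) as ``immediately obtained by the definitions,'' you prove them, by introducing the truncation maps $\phi_n(\bi)=\sum_{\ell=1}^n i_\ell m^{\ell-1}\in\Z/m^n\Z$, showing via the telescoped carry identity that each $\phi_n$ is additive, and combining this with the separation property --- in effect identifying $(\Sigma_m^\infty,+)$ with the inverse limit of the rings $\Z/m^n\Z$, i.e.\ the $m$-adic picture the paper only alludes to. This uniform reduction-to-congruences is the genuinely different ingredient; it buys a rigorous, systematic treatment of (i)--(iii) at the cost of a little machinery, and your telescoping computation $\sum_{\ell\le n}x_\ell m^{\ell-1}+k_nm^n=\phi_n(\bi)+\phi_n(\bj)$ is exactly right. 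For the two parts the paper does prove, your arguments essentially coincide with its: (iv) is the same first-disagreement comparison (yours is slightly more careful, since you note explicitly that the carries entering position $N$ agree), and (v) is the same digit-by-digit division construction, with your injectivity-via-first-nonzero-digit argument replacing the paper's equivalent reduction to the kernel case $a\bj=0^\infty\Rightarrow\bj=0^\infty$. One further point in your favor: both proofs of (v) need $\gcd(a,m)=1$, which for odd $a$ and arbitrary $m\ge2$ can fail --- for $m=3$, $a=3$, part (iii) gives $3\bi=0*\bi$, so $f_3$ is not surjective and (v) is false as literally stated --- and while the paper silently asserts ``since $a,m$ are coprime,'' you flag explicitly that the hypothesis is harmless because the relevant bases are $m=4$ and $m=4^s$, powers of two.
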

    \begin{proof}
        (i), (ii) and (iii) are immediately obtained by the definitions of algebraic operations on $\Sigma_m^\infty$.

        Let $i_n, j_n, \widetilde{i_n}, \widetilde{j_n}$ be the $n$th components of $\bi, \ \bj,  \ \widetilde{\bi}, \ \widetilde{\bj}$, respectively. Without loss of generality, we assume that $\rho(\bi, \ \bj)=m^{-k}$ and $\rho(\widetilde{\bi}, \ \widetilde{\bj})=m^{-\ell}$, where $k<\ell$ and
            $$k=\min\{n:\, i_n\ne j_n\},\quad \ell=\min\{n:\, \widetilde{i_n}\ne \widetilde{j_n}\}.$$
        Since $i_n+ \widetilde{i_n}=j_n+ \widetilde{j_n} $ holds for all $n<k$ and $i_k+ \widetilde{i_k}\ne j_k+ \widetilde{j_k},$ it follows from the  definition of   $\rho$  in \eqref{eq.12} that $\rho(\bi+\widetilde{\bi}, ~ \bj+\widetilde{\bj})=m^{-k},$  $({\rm iv})$ follows.

        For $({\rm v})$, it suffices to prove that there is a unique $\bj\in \Sigma_m^\infty$ such that $a\bj=\bi$ for the positive integer $a\in\N$ and $\bi\in\Sigma_m^\infty$. This is because $-(a\bj)=(-a)\bj=a(-\bj)$ holds for all $a \in \N$ and all $\bj\in \Sigma_m^\infty$.

          We first consider the case $\bi=0^\infty$. Assume $\bj=j_1j_2\cdots\in\Sigma_m^\infty$ makes $a\bj=0^\infty$. By the definition of scalar product or addition on $\Sigma_m^\infty$, we observe
            $$aj_1\equiv0\pmod m,$$
        which implies $j_1\equiv0\ppmod m$ since $a,m$ are coprime. The fact $j_1\in\{0,1,\ldots,m-1\}$ says that $j_1=0$. So, by induction, we get $\bj=0^\infty$.

        Next, we consider other $\bi=i_1i_2\cdots \in \Sigma_m^\infty$.  We will define each component $j_n$ of $\bj=j_1j_2\cdots$ by the recursion such that $a\bj=\bi$. More precisely, since $\gcd(a,m)=1$, by the division operations, there exists a   pair $(j_1,k_1)$ with $j_1\in\Sigma_m$ and $k_1\in\{0,1,\ldots,a-1\}$ such that
            $$aj_1=mk_1+i_1.$$
        Assuming that $j_n$ and $k_n$ have been defined. Then there exists a   pair $(j_{n+1},k_{n+1})$ with $j_{n+1}\in\{0,1,2,3\}$ and $ k_{n+1} \in\{0,1,\ldots, a-1\}$ such that
        $$aj_{n+1}+k_n=mk_{n+1}+i_{n+1},$$
    holds by using division operations again. Consequently, we get a desired  infinite path $\bj=j_1j_2\cdots\in\Sigma_m^\infty$ such that $a\bj=\bi.$  {If there exists another $\bj'=j_1'j_2'\cdots\in\Sigma_m^\infty$ such that $a\bj'=\bi$. Then $a(\bj-\bj')=a\bj-a\bj'=\bi-\bi=0^\infty$. So, $\bj-\bj'=0^\infty$, as we showed before. The uniqueness of $\bj$ is obtained.}
  \end{proof}

\subsection{Algebraic operations on the set of ultimately periodic sequence}\label{sec.4.2}

    We call an infinite word $\bi\in\Sigma_m^\infty$ is \textit{ultimately periodic} if there exist $\bi_1,\bi_2\in\Sigma_m^*$ such that $\bi=\bi_1 \bi_2^\infty$. Also, $\bi_1$ is called the aperiodic part and $\bi_2$ is the periodic part of $\bi$. We use the symbol $\Q_m$ to denote the collection of all sequence in $\Sigma_m^\infty$ that are ultimately periodic. In this section, we are especially concerned about the algebraic operations on the set of ultimately periodic sequence  in $\Sigma_4^\infty$.

    A simple observation is stated as follows.
    \begin{prop}\label{lemtwoperiodic}
    Let $\bi$ and $\bj$ be two ultimately periodic sequence in  $\Sigma_4^\infty$. Then their aperiodic parts and periodic parts can be chosen such that they are of the same length.
    \end{prop}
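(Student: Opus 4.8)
The plan is to reduce the claim to two independent normalizations that can be performed one after the other: first I would equalize the lengths of the aperiodic parts, and then equalize the lengths of the periodic parts. Write $\bi=\bi_1\bi_2^\infty$ and $\bj=\bj_1\bj_2^\infty$ with $|\bi_1|=p_1$, $|\bi_2|=q_1$, $|\bj_1|=p_2$, $|\bj_2|=q_2$. The whole argument rests on two elementary moves on ultimately periodic words: (1) lengthening the aperiodic part by one symbol at the cost of a cyclic rotation of the periodic block, and (2) lengthening the periodic part by an integer factor while leaving the aperiodic part untouched. Neither move changes the underlying infinite word, so any representation produced still denotes the same $\bi$ (resp. $\bj$).

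For move (1), if $\bi_2=s_1s_2\cdots s_{q_1}$, then regrouping the first symbol of the periodic tail gives $\bi_2^\infty=s_1(s_2\cdots s_{q_1}s_1)^\infty$, whence $\bi_1\bi_2^\infty=(\bi_1 s_1)(s_2\cdots s_{q_1}s_1)^\infty$. Thus the aperiodic part grows by one symbol and the periodic block is replaced by its cyclic shift, which still has length $q_1$. Iterating, I can raise the aperiodic length to any value $\ge p_1$ while keeping the periodic length equal to $q_1$. For move (2), since $\bi_2^\infty=(\bi_2^k)^\infty$ for every $k\ge1$, I may replace the periodic block $\bi_2$ by $\bi_2^k$, multiplying the periodic length by $k$ and leaving $\bi_1$ completely unchanged.

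Now I would combine the two moves in order. Step~1 (aperiodic lengths): using move (1) I raise both aperiodic lengths to $P:=\max\{p_1,p_2\}$; after this $\bi$ and $\bj$ are written with aperiodic parts of the common length $P$ and periodic parts of (possibly rotated) lengths $q_1$ and $q_2$. Step~2 (periodic lengths): set $Q:=\mathrm{lcm}(q_1,q_2)$ and apply move (2) with $k=Q/q_1$ to $\bi$ and $k=Q/q_2$ to $\bj$, so that both periodic parts acquire the common length $Q$. Because move (2) does not affect the aperiodic parts, both aperiodic lengths remain $P$. This produces representations $\bi=\bi_1'(\bi_2')^\infty$ and $\bj=\bj_1'(\bj_2')^\infty$ with $|\bi_1'|=|\bj_1'|=P$ and $|\bi_2'|=|\bj_2'|=Q$, which is exactly the assertion.

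There is no genuine obstacle here; the content is purely bookkeeping. The only points that deserve a line of care are the verification that move (1) really leaves the infinite word unchanged (it is a regrouping of symbols together with the observation that cyclically rotating the block reproduces the same tail) and the remark that the two moves must be applied in the stated order, which is legitimate precisely because move (2) preserves the aperiodic length and therefore cannot undo Step~1. I would also note that, although the proposition is stated for $\Sigma_4^\infty$, nothing in this argument uses $m=4$, so the identical reasoning applies verbatim in any $\Sigma_m^\infty$.
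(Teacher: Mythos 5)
Your two moves are individually correct, and they are close in spirit to the paper's argument, but your final conclusion is weaker than what the proposition actually asserts and what the paper later uses. The paper's proof produces a \emph{single} common length $s$ for all four blocks: it chooses $s=\ell\,|\bi_2|\,|\bj_2|>\max\{|\bi_1|,|\bj_1|\}$ and re-chops both sequences at positions $s$ and $2s$, so that the new aperiodic parts \emph{and} the new periodic parts all lie in $\Sigma_4^s$. This stronger form is exactly what is needed in the proof of Lemma \ref{lem.1}: there one applies the block map $\pi_s$ and needs each sequence to become one symbol of $\Sigma_{4^s}$ followed by a constant tail, which forces the aperiodic prefix and the period of each sequence to have the \emph{same} length $s$. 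Your construction ends with aperiodic length $P=\max\{p_1,p_2\}$ and periodic length $Q=\mathrm{lcm}(q_1,q_2)$, and in general $P\ne Q$ (take $p_1=p_2=1$, $q_1=2$, $q_2=3$: then $P=1$ but $Q=6$), so the sentence ``which is exactly the assertion'' does not hold under the reading the paper relies on.

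The gap is easily repaired with your own two moves, just run in the other order and aimed at one target length: pick any common multiple $s$ of $q_1$ and $q_2$ with $s>\max\{p_1,p_2\}$; apply your move (2) to raise both periodic lengths to $s$, then apply your move (1) repeatedly to pad both aperiodic parts up to length $s$ (move (1) preserves the periodic length, so this cannot undo the first step). After this, all four blocks have length $s$, which is the paper's conclusion. Your closing remark that nothing uses $m=4$ is correct and applies equally to the paper's proof.
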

    \begin{proof}
       Write $\bi=\bi_1 \bi_2^\infty$ and $\bj=\bj_1 \bj_2^\infty$, where $\bi_k, \bj_k \in \Sigma_4^*, k=1,2.$  We choose a positive integer $\ell$ large enough such that $s=\ell \cdot |\bi_2|\cdot |\bj_2|>\max\{|\bi_1|,|\bj_1|\}$. By writing $\bi|_s=\bi_1'$, $\bj|_{s}=\bj_1'$ and  $\bi|_{2s}=\bi_1'\bi_2',  \bj|_{2s}=\bj_1'\bj_2',$ it follows that $|\bi_1'|=|\bi_2'|=|\bj_1'|=|\bj_2'|=s$  and hence $\bi=\bi_1' \bi_2'^\infty$, $\bj=\bj_1 \bj_2'^\infty$. We get  the desired conclusion.
    \end{proof}

    Let $s$ be a positive integer with $s \ge1$ and let $\bi=i_1i_2\cdots\in\Sigma_4^\infty$.  We define the map $\pi_s: \Sigma_4^\infty \rightarrow \Sigma_{4^s}^\infty$ as $\pi_s(\bi)=\sigma:=\sigma_1\sigma_2\cdots \in \Sigma_{4^s}^\infty$, where
        $$\sigma_k:=\sum_{i=1}^s 4^{i-1}i_{(k-1)s+i}\in\Sigma_{4^s}.$$
    It is clear that the map $\pi_s: \Sigma_4^\infty \rightarrow \Sigma_{4^s}^\infty$ is a bijection, and the restriction of $\pi_s$ on $\Q_4$ is also a bijection from $\Q_4$ to $\Q_{4^s}$.

    The next lemma  shows that $\Q_4$ is closed under the algebraic operations on $\Sigma_4^\infty$.
        \begin{lem}\label{lem.1}
            Let $a$ be a positive odd integer, and let $\bi$ and $\bj$ be two ultimately periodic sequence. Then $({\rm i})$ $\bi + \bj \in \Q_4$, and $({\rm ii})$ $\frac{\bi}{a} \in \Q_4.$
        \end{lem}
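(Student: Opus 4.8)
The plan is to reduce both statements to the same finite-state (pigeonhole) phenomenon: because the carries occurring in the algebraic operations of Proposition \ref{prop1} take only finitely many values, on the periodic tail of the input the computation is governed by a deterministic transition on a finite set, which forces the output to be eventually periodic. The map $\pi_s$ and Proposition \ref{lemtwoperiodic} will serve to normalize the periods.

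For part $({\rm i})$ I would first invoke Proposition \ref{lemtwoperiodic} to write $\bi=\bi_1\bi_2^\infty$ and $\bj=\bj_1\bj_2^\infty$ with $|\bi_1|=|\bi_2|=|\bj_1|=|\bj_2|=s$, and then apply the bijection $\pi_s:\Sigma_4^\infty\to\Sigma_{4^s}^\infty$, which turns $\bi,\bj$ into sequences $\alpha\beta^\infty$ and $\gamma\delta^\infty$ with single-digit periodic parts $\beta,\delta\in\Sigma_{4^s}$. The key preliminary fact I must record is that $\pi_s$ intertwines the addition: grouping $s$ consecutive base-$4$ digits into one base-$4^s$ digit preserves the carry arithmetic, since adding a length-$s$ block amounts to forming $A+B+c$ with $A,B<4^s$ and reading off its low $s$ digits together with the single carry-out $\lfloor(A+B+c)/4^s\rfloor\in\{0,1\}$. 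Granting $\pi_s(\bi+\bj)=\pi_s(\bi)+\pi_s(\bj)$, it suffices to add $\alpha\beta^\infty$ and $\gamma\delta^\infty$ in base $M:=4^s$. On the periodic tail each step forms $\beta+\delta+$(incoming carry), and since $\beta+\delta+1<2M$ the carry stays in $\{0,1\}$ and in fact stabilizes (to $0$ if $\beta+\delta\le M-2$, to $1$ if $\beta+\delta\ge M$, and it is already constant if $\beta+\delta=M-1$). A stabilized carry makes the output tail constant, i.e. ultimately periodic; pulling back through $\pi_s^{-1}$ yields $\bi+\bj\in\Q_4$.

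For part $({\rm ii})$ I would work directly from the division recursion underlying Proposition \ref{prop1}$({\rm v})$. Writing $\bi=i_1i_2\cdots$ with $i_{n+s}=i_n$ for all $n\ge N_0$, the unique $\bj=j_1j_2\cdots$ with $a\bj=\bi$ is built by choosing, at each step, $j_n\in\Sigma_4$ and a carry $k_n\in\{0,1,\ldots,a-1\}$ with $aj_n+k_{n-1}=4k_n+i_n$; here $j_n$ is the unique residue solving $aj_n\equiv i_n-k_{n-1}\ppmod4$ because $\gcd(a,4)=1$, and $k_n$ is then forced. The crucial finiteness is that the carry lives in $\{0,\ldots,a-1\}$. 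Hence on the periodic tail the augmented state $(k_n,\,n\bmod s)$ ranges over the finite set $\{0,\ldots,a-1\}\times\Z/s\Z$ and evolves by a deterministic transition (the input digit $i_n$ being a function of $n\bmod s$). By pigeonhole this state sequence is eventually periodic, so the carries $k_n$, and therefore the digits $j_n$, are eventually periodic, giving $\frac{\bi}{a}\in\Q_4$.

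The conceptual heart of both parts is the same eventual-periodicity-from-finitely-many-states argument, so the only genuinely delicate point I anticipate is the compatibility claim in part $({\rm i})$: verifying that $\pi_s$ respects the carry-based addition so that the period-$s$ problem collapses to the period-$1$ case. If one prefers to avoid that verification, part $({\rm i})$ can instead be handled exactly as part $({\rm ii})$, by tracking the augmented state $(c_n,\,n\bmod s)$ with carry $c_n\in\{0,1\}$ on the common-period tail and invoking the same pigeonhole conclusion.
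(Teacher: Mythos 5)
Your proposal is correct, and its core mechanism --- finitely many carry states plus pigeonhole forces eventual periodicity --- is exactly the paper's. Part $({\rm i})$ matches the paper's proof step for step: normalize via Proposition \ref{lemtwoperiodic}, push through $\pi_s$, and run the same three-case analysis of the stabilizing carry according to whether the sum of the periodic digits is $<4^s-1$, $=4^s-1$, or $\ge 4^s$. Part $({\rm ii})$ differs in one organizational respect: the paper also applies $\pi_s$ (choosing $s$ with $a<4^s$) so that the tail digit is a single constant $\sigma_2$, and then pigeonholes on the carry $k_n\in\{0,\ldots,a-1\}$ alone, recovering the periodicity of the digits from $a\theta_{n_0+N+1}\equiv a\theta_{n_0+1}\pmod{4^s}$ and $\gcd(a,4)=1$; you instead stay in base $4$ and pigeonhole on the augmented state $\left(k_n,\,n\bmod s\right)$. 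Both are valid; your variant has the small advantage of not needing the compatibility $\pi_s\big(\tfrac1a\bi\big)=\tfrac1a\pi_s(\bi)$, which the paper asserts without proof, while the paper's normalization buys a one-variable pigeonhole and a fully explicit digit recursion. Your flagging of the addition-compatibility $\pi_s(\bi+\bj)=\pi_s(\bi)+\pi_s(\bj)$ as the delicate point in $({\rm i})$ is apt --- the paper uses it implicitly as well --- and your fallback of handling $({\rm i})$ by the same augmented-state argument as $({\rm ii})$ would close even that gap.
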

        \begin{proof}
        $({\rm i})$ If $\bi,\bj $ are  ultimately periodic sequence in $\Sigma_4^\infty$, it  follows from Proposition \ref{lemtwoperiodic} that there is a positive integer $s$ such that $\bi_1, \bi_2, \bj_1, \bj_2 \in \Sigma_4^s$ and $\bi =\bi_1 \bi_2^\infty$ and $\bj=\bj_1  \bj_2^\infty$.  Write
            $$ \pi_s(\bi)=\sigma_1\sigma_2^\infty \in \Sigma_{4^s}^\infty  \quad \text{and} \quad \pi_s(\bj)=\tau_1\tau_2^\infty\in \Sigma_{4^s}^\infty,$$
        where $\sigma_i, \tau_i\in \Sigma_{4^s}$ for $i=1,2$. By the definition of additional operations in $\Sigma_{4^s}^\infty$,  the $n$th ($n\ge2$) component $\gamma_n$ of $\gamma:=\pi_s(\bi + \bj)$ can be obtained as follows:
        \begin{equation}\label{eqgamma}
            \gamma_n=\sigma_2+\tau_2  +k_{n-1} -4^s\left[\frac{\sigma_2+\tau_2+k_{n-1}}{4^s}\right],
        \end{equation}
        where $k_1=\big[\frac{\sigma_1+\tau_1}{4^s}\big]$ and     $k_n=\big[\frac{\sigma_2+\tau_2+k_{n-1}}{4^s}\big]\in\{0,1\}$ for all $n\ge2$. Thus, the following statements hold.
        \begin{itemize}
            \item If $\sigma_2+\tau_2<4^s-1$, then $\gamma_n=\sigma_2+\tau_2$ holds for all $n\geq 3$;
            \item If $\sigma_2+\tau_2=4^s-1$, then $\gamma_n=0$ (resp. $4^s-1$) holds for all $n\ge2$ if $k_1=1$ (resp. $k_1=0$);
            \item If $\sigma_2+\tau_2\ge 4^s$, then $\gamma_n=\sigma_2+\tau_2+1-4^s$ holds for all $n\geq 3$.
        \end{itemize}
    In a word, we have proved that $\gamma_n=\gamma_{n+1}=\cdots$ for all $n\ge3$, which means that $\bi+\bj=\pi_s^{-1}(\gamma)$ is an ultimately periodic sequence  in $\Sigma_4^\infty$.

        $({\rm ii})$ Since $\bi$ is an ultimately periodic sequence in $\Sigma_4^\infty$, there is a positive integer $s$ such that $\bi =\bi_1 \bi_2^\infty$ by Proposition \ref{lemtwoperiodic}, where $\bi_1, \bi_2  \in \Sigma_4^s$. Also, for the positive odd integer $a$, we choose a positive integer $s$   large enough such that  $a<4^s$. Put $\sigma=\pi_s(\bi)=\sigma_1 \sigma_2^\infty$.
        Notice that $\pi_s(\frac1a\bi)=  \frac1a\sigma  \in\Sigma_{4^s}^\infty$. Thus, it suffices to prove that   $\frac1a\sigma$ is ultimately periodic in $\Sigma_{4^s}^\infty.$

        Indeed, by setting $\theta=\frac1a\sigma= \theta_1\theta_2\cdots \in \Sigma_{4^s}^\infty,$  one gets that $a\theta=\sigma$, where the $n$th component $\theta_n$ of $\theta$ satisfies that
        \begin{equation}\label{eqathetan}
            a\theta_n+k_{n-1}-k_n4^s=\sigma_2,
        \end{equation}
    where $k_1=[4^{-s}a\theta_1]$ and $k_n=[4^{-s}(a\theta_n+k_{n-1})]\in  \{0,1,\ldots, a -1\}$ for $n \geq 2$.

    Since  $\{k_n\}_{n=2}^\infty$ is bounded, there exist positive integers $n_0$ and $N\ge2$ such that $k_{n_0+N}=k_{n_0}$. This, combined with \eqref{eqathetan}, yields that
            $$a\theta_{n_0+N+1}-\sigma_2\equiv  a\theta_{n_0+1}-\sigma_2\pmod{4^s}.$$
    It follows from $\gcd(a,4)=1$ that    $$\theta_{n_0+1}\equiv\theta_{n_0+N+1}\pmod{4^s},$$
    which yields that $\theta_{n_0+1}=\theta_{n_0+N+1}$ since all $\theta_n\in\{0,\ldots,4^s-1\}$.
    By \eqref{eqathetan} again, one obtains that $k_{n_0+N+1}=k_{n_0+1}$. Continuing the above procedure, we will get $\theta_{n_0+N+k}=\theta_{n_0+k}$ holds for all $k\ge1$. Therefore $\theta$ is ultimately periodic.  The proof of Lemma \ref{lem.1} is completed.
    \end{proof}

    \subsection{A formal infinite series in symbolic space}\label{sec.4.3}

        The following  lemma   generalizes the representation in Proposition \ref{prop.1} to that of  a class of formal infinite series on $\Sigma_4^\infty$, which is crucial to  Lemma  \ref{lempi2} below.
        \begin{lem}\label{lemisum}
    Given a sequence of infinite sequence $\bi_k=i_1^{(k)} i_2^{(k)}i_3^{(k)} \cdots \in\Sigma_4^\infty$ which ends in $0^\infty$ or $3^\infty$, then the formal infinite series $\sum_{k=1}^\infty4^{k-1}\bi_k$ is well-defined in $\Sigma_4^\infty$ and its $n$th  component $i_n$ is represented as
        \begin{equation}\label{eqtn}
            i_n=\left[\dfrac{d_n}{4^{n-1}}\right]-4\left[\dfrac{d_n}{4^{n}}\right],
        \end{equation}
    where each $d_n$ is an integer defined by $$d_n=\sum_{k=1}^n\sum_{j=1}^{n+1-k}4^{j+k-2}i^{(k)}_j.$$
    \end{lem}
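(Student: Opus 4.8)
The plan is to reduce the computation of the $n$th coordinate of the formal series to a single finite base-$4$ addition of nonnegative integers and then to invoke Proposition \ref{prop.1}. Throughout I write $S_N:=\sum_{k=1}^{N}4^{k-1}\bi_k$ for the partial sums.

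First I would establish that the series is well-defined. By Proposition \ref{prop1}(iii) we have $4^{k-1}\bi_k=0^{k-1}*\bi_k$, so the word $4^{N}\bi_{N+1}$ has zeros in its first $N$ coordinates. Since $S_N$ is an already reduced word in $\Sigma_4^\infty$, running the carry recursion from Subsection \ref{sec.4.1} with $c_0=0$ shows that adding a word that is zero on positions $1,\dots,N$ produces no carry on those positions; hence $S_{N+1}=S_N+4^{N}\bi_{N+1}$ agrees with $S_N$ on the first $N$ coordinates. Therefore $\{S_N\}$ is Cauchy for the metric $\rho$ of \eqref{eq.12}, and since $(\Sigma_4^\infty,\rho)$ is compact the limit $\sum_{k=1}^{\infty}4^{k-1}\bi_k$ exists in $\Sigma_4^\infty$ and coincides with $S_N$ on the first $N$ coordinates for every $N$; this is exactly the asserted well-definedness.

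Next I would localize the $n$th coordinate $i_n$. Fixing $n$, the above gives that $i_n$ equals the $n$th coordinate of $S_n$. As the $j$th letter of $\bi_k$ occupies position $k-1+j$ of $4^{k-1}\bi_k$, and as carries in $\Sigma_4^\infty$ move only from smaller to larger indices, the first $n$ coordinates of $S_n$ depend solely on the letters $i_j^{(k)}$ with $k\le n$ and $k-1+j\le n$, i.e.\ $j\le n+1-k$. Replacing each $\bi_k$ by its truncation $\bi_k^{(n)}:=i_1^{(k)}\cdots i_{n+1-k}^{(k)}0^\infty$ thus leaves the first $n$ coordinates of the sum unchanged, so $i_n$ equals the $n$th coordinate of the finite sum $T_n:=\sum_{k=1}^{n}4^{k-1}\bi_k^{(n)}$.

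Finally I would identify $T_n$. Each $\bi_k^{(n)}$ ends in $0^\infty$ and is the $4$-based expansion of the nonnegative integer $\sum_{j=1}^{n+1-k}4^{j-1}i_j^{(k)}$, so by Proposition \ref{prop1}(iii) the word $4^{k-1}\bi_k^{(n)}$ is the $4$-based expansion of $\sum_{j=1}^{n+1-k}4^{j+k-2}i_j^{(k)}$. Granting that the carry addition of Subsection \ref{sec.4.1} restricted to words ending in $0^\infty$ coincides with ordinary addition of the represented integers, $T_n$ is the $4$-based expansion of $d_n=\sum_{k=1}^{n}\sum_{j=1}^{n+1-k}4^{j+k-2}i_j^{(k)}$, the (nonnegative) integer in the statement. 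Proposition \ref{prop.1} applied to $d_n$ then gives $i_n=\left[d_n/4^{n-1}\right]-4\left[d_n/4^{n}\right]$, as claimed. The step requiring real care, and the main obstacle, is the \emph{compatibility} of the abstract carry addition with integer addition used above --- equivalently, that the map sending a word to its represented integer is additive on the words ending in $0^\infty$. I would prove this by induction on the number of summands, comparing the two sides modulo $4^M$ for every $M$ and using that base-$4$ addition of the length-$M$ initial blocks reproduces the integer sum modulo $4^M$, together with the uniqueness of the $4$-based expansion from Proposition \ref{lembase4}.
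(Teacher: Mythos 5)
Your proof is correct and follows essentially the same route as the paper: partial sums form a Cauchy sequence because carries propagate only forward, compactness of $(\Sigma_4^\infty,\rho)$ gives the limit, and the $n$th digit is computed by identifying a finite sum of words with the $4$-based expansion of an integer and invoking Proposition \ref{prop.1}. The only cosmetic difference is that you truncate each $\bi_k$ to a word ending in $0^\infty$ so that Proposition \ref{prop.1} applies directly to the nonnegative integer $d_n$, whereas the paper identifies the partial sum $\theta_n$ with the expansion of the (possibly negative) integer $\lambda_n$ and then uses the congruence $\lambda_n\equiv d_n\ppmod{4^n}$; you also rightly flag, and sketch a proof of, the compatibility of carry addition with integer addition, which the paper treats as obvious.
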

        \begin{proof}
    For each $n \in \N$, we define the $n$th partial sum of $\sum_{k\ge1}4^{k-1}\bi_k$ by $$\theta_n:=\sum_{k=1}^n 4^{k-1}\bi_k \in \Sigma_4^\infty.$$
    By {\rm(iii)} in Proposition \ref{prop1}, one has  that  $\theta_n=\sum_{k=1}^n0^{k-1}*\bi_k$, which yields that the $n$th component of  $\theta_k, k \geq n,$ are the same by the definition of the {\it sum} operation on $\Sigma_4^\infty$. Whence, it follows from \eqref{eq.12} that
        $$\rho(\theta_k,\theta_{l})\le 4^{-n} \qquad (\forall k,l \geq n).$$
    In other words,  $\{\theta_n\}_{n\ge1}$ is a Cauchy sequence in $\Sigma_4^\infty$ in the metric $\rho$ as in \eqref{eq.12}.
    Thus, the formal infinite series $ \sum_{k=1}^\infty4^{k-1}\bi_k$ is well-defined since $\Sigma_4^\infty$ is a compact metric space. Define
        $$\bi:= \sum_{k=1}^\infty4^{k-1}\bi_k=\underset{n\to\infty}{\lim}\theta_n\in\Sigma_4^\infty.$$
    Write  $\bi=i_1i_2\cdots \in \Sigma_4^\infty$.
    Note that for each $n \in \N$ the $n$th component $i_n$ of $\bi$ is the same as that of $\theta_n$.
    However, the infinite sequence $\theta_n$ is obvious the $4-$based  expansion of the integer $\lambda_n$, where
    $$
    \lambda_n=\sum_{j=1}^\infty 4^{j-1}i^{(1)}_j+ 4\sum_{j=1}^\infty 4^{j-1}i^{(2)}_j+\cdots+ 4^{n-2}\sum_{j=1}^\infty 4^{j-1} i^{(n-1)}_j +4^{n-1}\sum_{j=1}^\infty i^{(n)}_j=:\sum_{k=1}^n\sum_{j=1}^\infty4^{j+k-2}i^{(k)}_j,
    $$
    from which one gets, by the first part of Proposition \ref{prop.1}, that
    $$i_n=\left[\dfrac{\lambda_n}{4^{n-1}}\right] -4\times\left[\dfrac{\lambda_n}{4^n}\right].$$
    Therefore, we get the desired result \eqref{eqtn} by the second part of Proposition \ref{prop.1}, since $\lambda_n\equiv d_n\ppmod{4^n}$, where $d_n$ is defined by
    $$
    d_n=\sum_{j=1}^n 4^{j-1}i^{(1)}_j+ 4\sum_{j=1}^{n-1} 4^{j-1}i^{(2)}_j+\cdots+ 4^{n-2}\sum_{j=1}^2 4^{j-1} i^{(n-1)}_j +4^{n-1}i^{(n)}_1=:\sum_{k=1}^n\sum_{j=1}^{n+1-k}4^{j+k-2}i^{(k)}_j.
    $$
    This finishes the proof of Lemma \ref{lemisum}.
    \end{proof}

\subsection{An important  Transformation}\label{sec.4.4}

  By Proposition \ref{lembase4},   there is a canonical mapping $\Pi:\,\Z\mapsto\Sigma_4^\infty$ defined by
    \begin{equation}\label{eq.13}
     \Pi(\lambda)=i_1i_2\cdots,
    \end{equation}
    where $\lambda=\sum_{n=0}^\infty4^ni_n\in\Z$ and $i_1i_2\cdots$ is the unique $4-$based expansion of $\lambda$. Proposition \ref{prop1} (ii) implies that the canonical mapping $\Pi$ in \eqref{eq.13} is linear, that is, for $k_1,k_2,\lambda_1,\lambda_2\in\Z$,
      \begin{equation}\label{eq.20}
         \Pi(k_1\lambda_1+k_2\lambda_2)=k_1\Pi(\lambda_1)+k_2\Pi(\lambda_2).
     \end{equation}
    For each $p \in 2\Z+1$, we set $\Omega_p=\{-p,0,p\}$ and consider  the transformation $h_p: \Omega_p^\infty \rightarrow \Sigma_4^\infty$ as follows,
    \begin{equation}\label{eqha}
    h_p(\omega)=\sum_{k=1}^\infty \Pi(4^{k-1}\omega_k), \qquad
    \omega=\omega_1\omega_2\cdots \in\Omega_p^\infty.
        \end{equation}
    It is well-defined since \eqref{eq.20} and Proposition   \ref{prop1} (iii) imply that
    $$\Pi(4^{k-1}\omega_k)=4^{k-1}\Pi(\omega_k)=0^{k-1}\ast \Pi(\omega_k)\in \Sigma_4^\infty, \qquad (\forall k \in \N),$$
    which yields that the series on the right hand side of \eqref{eqha}
    converges in $\Sigma_4^\infty$ by Lemma \ref{lemisum}.

    \begin{lem}\label{lempi2}
    If the mapping $h_p: \Omega_p^\infty \rightarrow \Sigma_4^\infty$ is defined as in \eqref{eqha}, where $p \in 2\Z+1$, then $h_p$ is injective and
        \begin{equation}\label{eqah1}
            h_p(\omega)=ph_1\Big(p^{-1}\omega\Big), %\quad \mbox{ for } ,
        \end{equation}
    where $\omega=\omega_1\omega_2\cdots \in\Omega_p^\infty$ and  $p^{-1}\omega:=(p^{-1}\omega_1)*(p^{-1}\omega_2)*\cdots\in\Omega_1^\infty$.
        \end{lem}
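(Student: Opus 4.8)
The plan is to establish the algebraic identity \eqref{eqah1} first, and then read off injectivity from the local behaviour of $h_p$ at the lowest coordinate where two inputs differ. Both parts hinge on the same analytic fact, so I would set it up once: the scalar product $\bi\mapsto p\bi$ and the group operations on $\Sigma_4^\infty$ are continuous in the metric $\rho$, because by Proposition \ref{prop1}(i) the first $n$ components of $p\bi$ (and of a sum or difference) depend only on the first $n$ components of the inputs, carries moving only toward higher indices. This is what legitimizes passing a scalar multiple or a difference through the infinite series defining $h_p$.

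For \eqref{eqah1}, note that for $\omega=\omega_1\omega_2\cdots\in\Omega_p^\infty$ one has $p^{-1}\omega_k\in\{-1,0,1\}$, so $p^{-1}\omega\in\Omega_1^\infty$ and $h_1(p^{-1}\omega)=\sum_{k\ge1}\Pi(4^{k-1}p^{-1}\omega_k)$ is well-defined by Lemma \ref{lemisum}. Writing $\theta_N=\sum_{k=1}^N\Pi(4^{k-1}p^{-1}\omega_k)$ for the partial sums, the distributive law Proposition \ref{prop1}(ii) together with the linearity \eqref{eq.20} of $\Pi$ gives
$$p\,\theta_N=\sum_{k=1}^N p\,\Pi(4^{k-1}p^{-1}\omega_k)=\sum_{k=1}^N\Pi\big(4^{k-1}(p\cdot p^{-1}\omega_k)\big)=\sum_{k=1}^N\Pi(4^{k-1}\omega_k).$$
Letting $N\to\infty$ and invoking the continuity of $\bi\mapsto p\bi$, the left side tends to $p\,h_1(p^{-1}\omega)$ and the right side to $h_p(\omega)$, which is \eqref{eqah1}.

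For injectivity I would argue directly. Suppose $\omega\ne\omega'$ in $\Omega_p^\infty$ and set $n=\min\{k:\omega_k\ne\omega_k'\}$. Subtracting the two series term by term (continuity of subtraction) and using the linearity of $\Pi$,
$$h_p(\omega)-h_p(\omega')=\sum_{k=n}^\infty\Pi\big(4^{k-1}(\omega_k-\omega_k')\big)=\Pi\big(4^{n-1}(\omega_n-\omega_n')\big)+B,$$
where $B=\sum_{k>n}\Pi(4^{k-1}(\omega_k-\omega_k'))$. By \eqref{eq.20} and Proposition \ref{prop1}(iii) each summand of $B$ equals $0^{k-1}*\Pi(\omega_k-\omega_k')$ with $k-1\ge n$, so every partial sum of $B$ vanishes in its first $n$ coordinates, and hence so does $B$. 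The decisive point is that $\omega_n-\omega_n'$, the difference of two distinct elements of $\{-p,0,p\}$, lies in $\{\pm p,\pm 2p\}$ and is therefore $\not\equiv 0\pmod 4$ (odd, or twice an odd number); since the first component of $\Pi(\mu)$ is $\mu\bmod 4$ by Proposition \ref{prop.1}, the first coordinate of $\Pi(\omega_n-\omega_n')$ is nonzero, whence $\Pi(4^{n-1}(\omega_n-\omega_n'))=0^{n-1}*\Pi(\omega_n-\omega_n')$ has nonzero $n$th coordinate and vanishing first $n-1$ coordinates. Because $B$ contributes $0$ to the first $n$ coordinates with no carry entering position $n$, the $n$th coordinate of $h_p(\omega)-h_p(\omega')$ equals that nonzero digit, so $h_p(\omega)\ne h_p(\omega')$.

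The main obstacle is not the arithmetic but the analytic bookkeeping: every step moves a scalar multiple or a difference past an infinite sum, and each such interchange must be justified by continuity of the relevant operation on $(\Sigma_4^\infty,\rho)$. The cleanest way to encapsulate this is the observation that carries propagate only upward, so that the first $n$ output coordinates are determined by the first $n$ input coordinates; this is what makes the partial-sum/limit computation of \eqref{eqah1} valid and, in the injectivity step, what guarantees that the tail $B$ cannot perturb the $n$th coordinate.
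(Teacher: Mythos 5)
Your proof is correct, and it splits naturally into two halves: your derivation of \eqref{eqah1} is essentially identical to the paper's (finite partial sums, the distributive law of Proposition \ref{prop1}(ii) together with the linearity \eqref{eq.20} of $\Pi$, then passage to the limit), while your injectivity argument takes a genuinely different, more algebraic route. The paper proves injectivity metrically: it first establishes the claim \eqref{eq.14} that $\Pi(p)$, $\Pi(0)$, $\Pi(-p)$ are pairwise at distance exactly $4^{-1}$ (via the explicit digit form of $\Pi(-p)$), and then computes $\rho\big(h_p(\omega^{(1)}),h_p(\omega^{(2)})\big)=4^{-k}$ outright, where $k$ is the first index of disagreement, using Lemma \ref{lemisum} and the scaling behaviour of $\rho$. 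You instead work in the group $(\Sigma_4^\infty,+)$: you form $h_p(\omega)-h_p(\omega')$, isolate the term $\Pi\big(4^{n-1}(\omega_n-\omega_n')\big)$, and observe that a difference of distinct elements of $\{-p,0,p\}$ lies in $\{\pm p,\pm 2p\}$ and so is $\not\equiv 0\pmod 4$, forcing a nonzero $n$th digit that the tail $B$ cannot disturb. The two arguments rest on the same underlying fact (distinct elements of $\Omega_p$ have $4$-based expansions already differing in the first digit), but yours trades the paper's metric bookkeeping (Proposition \ref{prop1}(iv) and \eqref{eq.14}) for linearity of $\Pi$ plus an elementary congruence, whereas the paper's computation yields the slightly stronger quantitative statement that $h_p$ scales distances exactly ($\rho=4^{-k}$), not merely that it is injective. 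A further point in your favour: you explicitly justify the interchange of scalar multiplication and subtraction with the infinite sums by noting that carries propagate only toward higher indices, a continuity point the paper's ``letting $n\to\infty$'' leaves implicit.
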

        \begin{proof}
        We firstly claim that
        \begin{equation}\label{eq.14}
            \rho\big(\Pi(p),\pi(0)\big)=\rho\big(\Pi(-p),\Pi(0)\big) =\rho\big(\Pi(-p),\Pi(p)\big)=4^{-1},
        \end{equation}
    where $\rho$ is the metric on $\Sigma_4^\infty$ defined as in \eqref{eq.12}.
    In fact, since $p \in 2\Z+1$, it follows that the $4-$based expansion of $p$
        $$\Pi(p)=i_1i_2i_3\cdots \in \Sigma_4^\infty$$
    ends in $0^\infty$ or $3^\infty$, where $i_1\in \{1,3\}$. Clearly, $\Pi(-p)=(4-i_1)(3-i_2)(3-i_3)\cdots \in\Sigma_4^\infty$ and $\pi(0)= 0^\infty$. Thus, the claim follows from the definition of $\rho$.

    We next show the map $h_p: \Omega_p^\infty \rightarrow \Sigma_4^\infty$ is injective by showing that $\rho\big(h_p(\omega^{(1)}),h_p(\omega^{(2)})\big)>0$ for any two  distinct sequence
        $$\omega^{(i)}= \omega_1^{(i)}\omega_2^{(i)}\omega_3^{(i)}\cdots \in \Omega_p^\infty, \qquad \text{where} \quad i=1,2.$$

        In fact,  by setting  $k$ be the first integer such that $\omega_k^{(1)}\not =\omega_k^{(2)}$, one gets that
        \begin{eqnarray*}
            \rho\big(h_p(\omega^{(1)}),h_p(\omega^{(2)})\big)&=&\rho\left(\sum_{j=k}^\infty \Pi(4^{j-1}\omega^{(1)}_j), \sum_{j=k}^\infty \Pi(4^{j-1}\omega^{(2)}_j)\right)\\
            &=&\rho\Big(\Pi(4^{k-1}\omega^{(1)}_k), \Pi(4^{k-1}\omega^{(2)}_k) \Big)\\
            &=&4^{-k+1}\rho\Big( \Pi(\omega^{(1)}_k), \Pi(\omega^{(2)}_k) \Big)=4^{-k}>0.
        \end{eqnarray*}
    Here,  the first two equalities follow from Lemma \ref{lemisum}, and the third equality follows from the fact that
        $$\rho\big(\Pi(4^{n-1}\omega^{(1)}_n), \Pi(4^{n-1}\omega^{(2)}_n)\big) =4^{-n+1}\rho\big(\Pi(\omega^{(1)}_n), \Pi(\omega^{(2)}_n)\big), \qquad (\forall n \in \N),$$
        and the last equality follows from \eqref{eq.14}. Therefore, $h_p: \Omega_p^\infty \rightarrow \Sigma_4^\infty$  is injective.

    At last we show that \eqref{eqah1} holds. Indeed, since
        $$p \Pi  (1)=\Pi(p),  \quad -p\Pi(1)=\Pi(-p)  \quad \hbox{and} \quad p\Pi( 4^{k-1})=\Pi(4^{k-1}p),$$
    always hold for all $p \in 2\Z+1$ and $p=0$, it follows from Proposition \ref{prop1} {\rm(ii)} and \eqref{eq.20} that, for any infinite sequence $\omega=\omega_1\omega_2\cdots \in\Omega_p^\infty$,
        \begin{equation}\label{eqdistr}
            \sum_{k=1}^n  \Pi\Big(  4^{k-1}\omega_k\Big)=\sum_{k=1}^n p\Pi\Big( 4^{k-1}\frac{\omega_k}{p}\Big)= p\sum_{k=1}^n \Pi\Big( 4^{k-1}\frac{\omega_k}{p}\Big) \qquad (\forall n\ge1).
        \end{equation}
    Letting $n \rightarrow \infty$, we get from \eqref{eqha} that
        $$h_p(\omega) =\sum_{k=1}^\infty \Pi(4^{k-1}\omega_k)
            =p\sum_{k=1}^\infty  \Pi \Big(\frac1p 4^{k-1}\omega_k\Big)=ph_1\Big(\frac1p\omega\Big).$$
    This completes the proof of Lemma \ref{lempi2}.
    \end{proof}
    \subsection{Proof of Theorem  \ref{lemkey}}\label{sec.4.5}
    Now, we have all ingredients for the proof of the necessary part of Theorem \ref{lemkey}.

    \begin{proof}[The proof of the necessary part of Theorem  \ref{lemkey}]
    By Proposition \ref{prop.2}, we can suppose that there is a  non-zero integer $\lambda\in\Lambda_I(A_p)\setminus\Lambda(A_p)$.
    Assume that the infinite quasi $4-$based expansion of $\lambda$ is $\omega=\omega_1\omega_2\cdots \in\Omega_p^\infty$ without ending in $0^\infty$, namely,
    \begin{equation}\label{eq.thm.4.1.1}
    \lambda\equiv\sum_{k=1}^n4^{k-1}\omega_k\pmod{4^n},  \qquad \text{ for all $n \ge 1$}.
     \end{equation}
     On the other hand, by Proposition \ref{lembase4}, the integer $\lambda$ has the $4-$based  expansion $\bi =i_1i_2i_3\cdots\in\Sigma_4^\infty$ which ends in $0^\infty$ or $3^\infty$, namely,
     \begin{equation}\label{eq.thm.4.1.2}
          \lambda=\sum_{k=1}^\infty 4^{k-1}i_k.
     \end{equation}

    \smallskip
 \noindent \textbf{Claim 1.}  With the same notations above, one has   $\bi=h_p(\omega)$, where $h_p:\Omega_p^\infty \rightarrow \Sigma_4^\infty$ is defined by \eqref{eqha}.

    \smallskip
    \noindent\textit{Proof of Claim 1.}  First, we assume that the $4-$based expansion of $\omega_k,~ k \geq 1,$ is the infinite sequence   $\bi_k=i_1^{(k)} i_2^{(k)}i_3^{(k)} \cdots \in\Sigma_4^\infty$ which ends in $0^\infty$ or $3^\infty$, that is $\Pi(\omega_k)=\bi_k$. This yields that $\Pi(4^{k-1}\omega_k)=4^{k-1}\bi_k$ by Proposition \ref{prop1} (iii), and hence  Lemma \ref{lemisum} implies that
    $$h_p(\omega)=\sum_{k=1}^\infty \Pi(4^{k-1}\omega_n)
    =\sum_{k=1}^\infty  4^{k-1}\bi_k \in \Sigma_4^\infty.$$
    Write
    $h_p(\omega)=j_1j_2j_3\cdots\in \Sigma_4^\infty.$ It follows from Lemma \ref{lemisum} that for each $n \in \N$ the $n$th component $j_n$ of $h_p(\omega)$ is
     \begin{equation*}
    j_n=\left[\dfrac{d_n}{4^{n-1}}\right]-4\left[\dfrac{d_n}{4^{n}}\right],
    \end{equation*}
    where
    $$d_n=\sum_{k=1}^n\sum_{j=1}^{n+1-k}4^{j+k-2}i^{(k)}_j.$$
     Because $\omega_k=\sum_{j=1}^\infty 4^{j-1}i_j^{(k)}$ for all $k \in \N$, it is clear that
    $$d_n \equiv \sum_{k=1}^n4^{k-1}\omega_k
     \pmod{4^n}, \qquad (n \ge 1).$$
    Applying Proposition \ref{prop.1} to \eqref{eq.thm.4.1.1} and  \eqref{eq.thm.4.1.2}, we get that $i_n=j_n$ for all $n \in \N$.
    Thus, $\bi=h_p(\omega)$, the claim is true. This completes the proof of Claim 1.

\smallskip

    Note that the $4-$based expansion $\bi=i_1i_2i_3\cdots\in \Sigma_4^\infty$ of the integer $\lambda$ is ultimately periodic.
    It follows from Lemma \ref{lem.1}, Claim 1 and  \eqref{eqah1} that $h_1\big(p^{-1}\omega\big)$ is ultimately periodic in $\Sigma_4^\infty$, where
    $$p^{-1}\omega :=(p^{-1}w_1)*(p^{-1}w_2)*\cdots\in\Omega_1^\infty.$$
    Consequently, $h_1\big(p^{-1}\omega\big)+1^\infty$ is also ultimately periodic in $\Sigma_4^\infty$ by Lemma \ref{lem.1} again.

    Define $b_k=p^{-1}{\omega_k} \in \{-1, 0, 1\}$. It follows from \eqref{eqha}, Lemma \ref{lemisum} and the fact $ 1^\infty=\sum_{k\ge1}\Pi(4^{k-1})$ that
    $$h_1\big(p^{-1}\omega\big)+1^\infty
    =\sum_{k=1}^\infty \Pi\big(4^{k-1} b_k \big)+\sum_{k\ge1}\Pi(4^{k-1})
    =\sum_{k=1}^\infty \Pi\big(4^{k-1}(b_k+1)\big),$$
    which is ultimately periodic in $\Sigma_4^\infty$ by Lemma \ref{lem.1}.
    Observing that $b_k+1\in\{0,1,2\}$,  from which one has
    $$\Pi\big(4^{k-1}(b_k+1)\big)=0^{k-1}*(b_k+1)*0^\infty \in \Sigma_4^\infty,$$
    and hence
    $$\sum_{k=1}^\infty \Pi\big(4^{k-1}(b_k+1)\big)=(b_1+1) (b_2+1)\cdots$$
    Consequently, both the infinite sequence  $b_1b_2\cdots\in\Omega_1^\infty$ and  $\omega=\omega_1\omega_2\cdots \in\Omega_p^\infty$ are ultimately periodic.
    The proof of Theorem \ref{lemkey} is finished.
 \end{proof}
    \begin{rema}
      Theorem \ref{lemkey}  provides the most useful way to construct common eigen-spectra for all odd spectral eigenvalues of $\mu_4$, which greatly improves the method of constructing common eigen-spectra for $\mu_4$ in    \cite[Section 4]{FHW2018}  and \cite[Section 3]{F2019}.
      In fact, the methods in \cite{FHW2018,F2019} can only construct common eigen-spectra corresponding to finitely many spectral eigenvalues of $\mu_4$, for instance, see Theorem 4.13 and Corollary 4.14 in  \cite{FHW2018}, while Theorem \ref{lemkey}  can deal with those  corresponding to all spectral eigenvalues $p \in 2\Z+1$.
    \end{rema}

    We end this paper by constructing an  explicit  eigen-spectrum of $\mu_4$ such that Theorem \ref{coro1} holds, we state it as Theorem \ref{thm.2} below. %Moreover, one can similarly construct other common eigen-spectra of $\mu_4$.
    \begin{theo}\label{thm.2}
        Let $A_1=a_1a_2a_3\cdots$ in \eqref{eq.19'} be as $A_1=\tau_1\tau_2\cdots$, where $\tau_n=1^n*(-1)^{n+1}$ for all $n\ge1$. Then for all odd integers $p\in2\Z+1$, all of the following sets
        \begin{equation*}
            p\Lambda(A_1)=p\left\{\sum_{k=1}^n   4^{k-1}  \omega_k: \omega_k \in \{0, a _k\}\right\},
        \end{equation*}
        are spectra for $\mu_4$.
        \end{theo}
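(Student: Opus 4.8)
The plan is to reduce the theorem, via Theorem \ref{lemkey}, to a single combinatorial fact about the block structure of $A_1$ that is completely insensitive to $p$. Write $A_1=a_1a_2a_3\cdots$ with $a_k\in\{-1,1\}$. The prescription $\tau_n=1^n*(-1)^{n+1}$ means the sign sequence $(a_k)$ is the concatenation of alternating constant runs: a run of $+1$'s of length $n$ immediately followed by a run of $-1$'s of length $n+1$, for $n=1,2,3,\dots$. These runs do not merge, since each $\tau_n$ ends in $-1$ while each $\tau_{n+1}$ begins with $+1$; consequently the successive run lengths are $1,2,2,3,3,4,4,\dots$ and in particular tend to infinity. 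Since $A_p=pA_1=(pa_1)(pa_2)\cdots\in\{-p,p\}^\infty$ gives $\Lambda(A_p)=p\Lambda(A_1)$, it suffices to prove that $\Lambda(A_p)$ is a spectrum for every $p\in2\Z+1$.

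First I would argue by contradiction through the contrapositive of Theorem \ref{lemkey}. Assume that $\Lambda(A_p)$ fails to be a spectrum for some odd $p$. Then Theorem \ref{lemkey} furnishes an ultimately periodic path $\omega=\omega_1\omega_2\cdots\in\Omega_p^\infty$, not ending in $0^\infty$, with $\omega_k\in\{0,pa_k\}$ for all $k$, which is the infinite quasi $4$-based expansion of some integer. Dividing termwise by the nonzero constant $p$ yields $b=b_1b_2\cdots$ with $b_k:=\omega_k/p\in\{0,a_k\}$; this sequence is again ultimately periodic and, because $\omega$ does not end in $0^\infty$, it has infinitely many nonzero entries. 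Thus the theorem reduces to the following $p$-free statement: there is no ultimately periodic sequence $b\in\{-1,0,1\}^\infty$ with $b_k\in\{0,a_k\}$ for all $k$ and with infinitely many nonzero entries.

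The core of the argument is to prove this statement directly from the growth of the run lengths. Suppose such a $b$ existed, and choose $n_0$ and a period $T\ge1$ with $b_{k+T}=b_k$ for all $k\ge n_0$. As $b$ has infinitely many nonzero entries, pick $k_0\ge n_0$ with $b_{k_0}\ne0$. Periodicity forces $b_{k_0+jT}=b_{k_0}\ne0$ for every $j\ge0$, and the constraint $b_k\in\{0,a_k\}$ then forces $a_{k_0+jT}=b_{k_0}$ for all $j\ge0$; that is, the sign sequence $(a_k)$ would be constant along the arithmetic progression $\{k_0+jT:j\ge0\}$. This is impossible: every run of length at least $T$ located at positions beyond $k_0$ contains a term of this progression (an interval of at least $T$ consecutive integers meets the residue class of $k_0$ modulo $T$), and since both the $+1$-runs and the $-1$-runs eventually have length exceeding $T$, the progression meets infinitely many runs of each sign. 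Hence $(a_{k_0+jT})_{j\ge0}$ assumes both values $+1$ and $-1$ infinitely often, contradicting its being constant. This contradiction establishes the claim and therefore the theorem.

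I expect the main obstacle to be exactly this final step, namely verifying that the growing block design prevents $(a_k)$ from being eventually constant along any arithmetic progression; the quantitative input that all sufficiently late runs exceed any prescribed period $T$ is precisely what the choice $\tau_n=1^n*(-1)^{n+1}$ is engineered to guarantee. I would also emphasize that the obstruction produced here is uniform in $p$: the same sequence $A_1$ defeats every odd $p$ simultaneously, so $\Lambda(A_1)$ is an explicit member of the common spectral eigen-subspace whose uncountability was asserted in Theorem \ref{coro1}.
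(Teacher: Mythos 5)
Your proposal is correct and follows essentially the same route as the paper's proof: reduce to $\Lambda(A_p)$ via $\Lambda(A_p)=p\Lambda(A_1)$, invoke Theorem \ref{lemkey} to extract an ultimately periodic quasi $4$-based expansion with infinitely many nonzero entries, and conclude that the sign sequence $(a_k)$ would have to be constant along an arithmetic progression, which the growing run lengths $1,2,2,3,3,4,4,\ldots$ forbid. The only difference is cosmetic (you normalize by dividing by $p$ first, while the paper works with $\omega_{mt+s+1}=\pm p$ directly), and your write-up in fact supplies the detail the paper compresses into ``it is impossible by the structure of the label $A_1$'', namely that any progression of gap $T$ must eventually meet runs of both signs once all runs exceed length $T$.
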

        \begin{proof}
    Under the assumption on $A_1$ in Theorem \ref{thm.2}, for each odd integer $p \in 2\Z+1$, we set
        $$A_p:=(p a_1)(p a_2)(p a_3)\cdots\in\{-p, p\}^\infty.$$
    Clearly, $\Lambda(A_p)=p\Lambda(A_1)$.
    Suppose on the contrary that there is an odd integer $p\in 2 \Z+1$ such that the set $\Lambda( A_p)$
    is not a spectrum for $\mu_4$. By Theorem \ref{lemkey}, there exists an integer $\lambda$ satisfies that        $$\lambda\equiv\sum_{k=1}^n4^{k-1}\omega_k\ppmod{4^n}\quad\text{ for all } n\in\N,\,\omega_k\in\{0, p a_k\},$$
    where the infinite quasi $4-$based expansion $\omega_1\omega_2\cdots\in \{-p,0,p\}^\infty$ of $\lambda$ is ultimately periodic and $\#\{n\in\N:\,\omega_n\ne0\}=\infty$. Therefore,
    there are   $s, t \in \N$ such that $\omega_{s+1}\in \{-p,p\}$ and
    $$\omega_1\omega_2\omega_3\cdots=\omega_1\cdots \omega_s (\omega_{s+1}\cdots \omega_{s+t})^\infty,$$
    which yields that either $\omega_{mt+s+1}=\omega_{s+1}=p$ or $\omega_{mt+s+1}=\omega_{s+1}=-p$ holds  for all $m\ge0$. It follows that either $a_{mt+s+1}=a_{s+1}=1$ or $a_{mt+s+1}=a_{s+1}=-1$ for all $m\ge1$. However, it is impossible by the structure of the label $A_1$.  The proof of Theorem \ref{thm.2} is completed.
    \end{proof}

\noindent\textbf{Acknowledgements.} Guotai Deng is supported by the National Natural Science Foundation of China grant (Nos. 12071171), Hubei Provincial Natural Science Foundation of China (2020CFB833), and Science Foundation of Jiangxi Education Department (GJJ202302). Yan-Song Fu is supported by the National Natural Science Foundation of China (Nos.12371090) and the Fundamental Research Funds for the Central Universities (2023ZKPYLX01).

%\noindent\textbf{Competing Interest.} None.


\begin{thebibliography}{9999}
\bibitem  {AFL}
 {\sc  L. X. An, X. Y. Fu and C. K. Lai }, {\it On Spectral Cantor-Moran measures and a variant of Bourgain's sum of sine problem}, Adv. Math. 349 (2019), 84-124.

\bibitem  {AH2014}
{\sc  L. X. An and X. G. He }, {\it A class of spectral Moran measures}, J. Funct. Anal. 266 (2014), 343-354.

\bibitem  {ADH2022}
{\sc  L. X. An, X. H. Dong and X. G. He }, {\it On spectra and spectral eigenmatrix problems of the planar Sierpinski measures}, Indiana Univ. Math. J. 71 (2022), no. 2, 913-952.

% \bibitem  {AHH} {\sc  L. X. An, L. He and X. G. He }, {\it Spectrality and non-spectrality of the Riesz product measures with three elements in digit sets}, J. Funct. Anal. (2018),  J. Funct. Anal. 277 (2019), no. 1, 255-278.

% \bibitem  {AHL1}
% {\sc  L. X. An, X. G. He and K. S. Lau}, {\it Spectrality of a class of infinite convolutions}, Adv. Math. 283 (2015),362-376.

%\bibitem {Chr2003}{\sc O. Christensen}, {\it An Introduction to Frames and Riesz Bases}, Appl. Numer. Harmon. Anal., Birkh\"{a}user Boston Inc., Boston, MA, 2003.

\bibitem {D2012}
{\sc X. R. Dai}, {\it When does a Bernoulli convolution admit a spectrum?},  Adv. Math. 231 (2012),  1681-1693.


\bibitem {Dai2016}
{\sc X. R. Dai}, {\it Spectra of Cantor measures},  Math. Ann.  366  (2016) no.3-4,, 1621-1647.

\bibitem {DFY2023}
{\sc X. R. Dai, X. Y. Fu and Z. H. Yan}, {\it Spectral Properties of Sierpinski Measures on $\R^n$}, Constr Approx (2023). https://doi.org/10.1007/s00365-023-09654-0

\bibitem  {DHL2013}
 {\sc X. R. Dai, X. G. He and C. K. Lai}, {\it Spectral property of Cantor measures with consecutive digits},  Adv. Math. 242 (2013), 187-208.


 \bibitem {DC2021}{\sc Q. R. Deng and J. B. Chen}, {\it Uniformity of spectral self-affine measures}, Adv. Math. 380 (2021), Paper No. 107568.

\bibitem {DHL2019}
{\sc D. Dutkay, J. Hausserman and C. K. Lai}, {\it Hadamard triples generate self-affine spectral measures}, Trans. Amer. Math. Soc. 371 (2019), 1439-1481.



%\bibitem {DQR2016} {\sc Q. R. Deng, X. H. Dong and M. T. Li}, {\it Tree structure of spectra of spectral self-affine measures}, J. Funct. Anal. 277 (2019), no. 3, 937-957

%\bibitem  {D2017} {\sc D. X. Ding}, {\it Spectral property of certain fractal measures},  J. Math. Anal. Appl. 451(2017), 623-628.

\bibitem {DH2016}
{\sc D. Dutkay and J. Hausserman}, {\it Number theory problems from the harmonic analysis of a fractal}, J. Number Theory 159 (2016), 7-26.


\bibitem {DHS2009}
{\sc D. Dutkay, D. G. Han and Q. Y. Sun}, {\it On spectra of a Cantor measure}, Adv. Math. 221 (2009), 251-276.

\bibitem {DHS2014}
{\sc D. Dutkay, D. G. Han and Q. Y. Sun}, {\it Divergence of the mock and scrambled Fourier series on fractal measures}, Trans. Amer. Math. Soc. 366 (2014), 2191-2208.

 \bibitem {DHSW2011} {\sc D. Dutkay, D. G. Han, Q. Y. Sun and E. Weber}, {\it On the Beurling dimension of exponential frames}, Adv. Math. 226 (2011), 285-297.


%\bibitem{DJ06} {\sc D. Dutkay and P. Jorgensen}, {\it Wavelets   on fractals}, Rev. Mat. Iberoam. 22(2006),131-180.

\bibitem  {DJ2012}
{\sc D. Dutkay and P. Jorgensen}, {\it Fourier duality for fractal measures with affine scales},  Math. Comp. 81 (2012), no. 280, 2253-2273.

\bibitem {DL2014}{\sc D. Dutkay and C. K. Lai}, {\it Uniformity of measures with Fourier frames},  Adv. Math. 252 (2014), 684-707.

%\bibitem {DL2017} {\sc D. Dutkay and C. K. Lai}, {\it Spectral measures generated by arbitrary and random convolutions},  J. Math. Pures. Appl. 107(2017),183-204.
%\bibitem {DS1952}{\sc R.J. Duffin, A.C. Schaeffer}, {\it A class of nonharmonic Fourier series}, Trans. Ame. Math. Soc. 72 (1952) 341-366.

\bibitem {F1} {\sc K. J. Falconer}, {\it Fractal Geometry, Mathematical Foundations and Applications}, Wiley, New York, 1990.

% \bibitem {FWW}  {\sc D. J. Feng, Z. Y. Wen and J. Wu}, {\it Some dimensional results for homogeneous Moran sets}, Sci. China Ser. A 40 (1997),475-482.

\bibitem {Fug1974}
 {\sc B. Fuglede}, {\it Commuting self-adjoint partial differential operators and a group theoretic problem}, J. Funct. Anal. 16 (1974)101-121.


\bibitem {Foll1998}
{\sc G. B. Folland}, {\it Real Analysis. Modern Techniques and their applications}, Pure and Applied Mathematics (New York). A Wiley-Interscience Publication. John Wiley Sons, Inc., New York, 1999.

\bibitem {FHL2015}{\sc X. Y. Fu, X. G. He and K. S. Lau}, {\it
Spectrality of self-similar tiles}, Constr. Approx. 42 (2015), no. 3, 519-541.

\bibitem {F2019}{\sc Y. S. Fu}, {\it A characterization on the spectra of self-affine measures}, J. Fourier Anal. Appl. 25 (2019), no. 3, 732-750.

  \bibitem {FH2017}
{\sc Y. S. Fu and L. He}, {\it Scaling of spectra of a class of random convolution on $\R$}, J. Funct. Anal. 273 (2017),3002-3026.

\bibitem {FHW2018}
{\sc Y. S. Fu, X. G. He and Z. X. Wen}, {\it Spectra  of Bernoulli convolutions and random convolutions}, J. Math. Pures Appl. 116 (2018),105-131.

% \bibitem {FW2017} {\sc Y. S. Fu and Z. X. Wen}, {\it Spectrality of infinite convolutions with three-element digit sets}, Monatsh. Math. 183(2017),465-485.



\bibitem{FTW2022}
{\sc Y. S. Fu, M. W. Tang and Z. Y. Wen}, {\it Convergence of mock Fourier series on generalized Bernoulli convolutions}, Acta Appl. Math. 179 (2022), Paper No. 14, 23 pp.

%\bibitem{FW2021}{\sc Y. S. Fu, C. Wang},  {\it  Spectra of a class of Cantor-Moran measures with three-element digit sets}, J. Approx. Theory 261 (2021), Paper No. 105494, 27 pp.

\bibitem  {HL2008}
{\sc T. Y. Hu and K. S. Lau}, {\it Spectral property of the Bernoulli convolutions}, Adv. Math. 219 (2008), no. 2, 554-567.

\bibitem  {HLL2013}
{\sc X. G. He, C. K. Lai and K. S. Lau}, {\it Exponential spectra in
$L^2(\mu)$}, Appl. Comput. Harmon. Anal. 34 (2013),  327-338.

%\bibitem {GL17}
%{\sc R. Greenfeld and N. Lev}, {\it Fuglede's spectral set conjecture for convex polytopes}, Anal. PDE 10(2017),1497-1538.

\bibitem {HTW} {\sc X. G. He, M. W. Tang and Z. Y. Wu}, {\it Spectral structure and spectral eigenvalue problems of a class of self-similar spectral measures}, J. Funct. Anal. 277 (2019), no. 10, 3688-3722.

%\bibitem  {HH}
 %{\sc L. He and X. G. He}, {\it On the Fourier orthonormal bases of Cantor-Moran measures},   J. Funct. Anal. 272(2017),1980-2004.

%\bibitem {IMR17}
%{\sc A. Iosevich, A. Mayeli and J. Pakianathan}, {\it  The Fuglede conjecture holds in $\Z_p \times \Z_p$}, Anal. PDE 10(2017),757-764.

\bibitem  {Jor2012}{\sc P. Jorgensen}, {\it Ergodic scales in fractal measures}, Math. Comp. 278(2012),941-945.

\bibitem  {JKS2011}
{\sc P. Jorgensen, K. Kornelson and K. Shuman}, {\it Families of spectral sets for Bernoulli convolutions}, J. Fourier Anal. Appl., 17(3) (2011), 431-456.

\bibitem  {JKS2012} {\sc P. Jorgensen, K. Kornelson and K. Shuman}, {\it An operator-fractal}, Numer. Funct. Anal. Optim. 33(2012),1070-1094.

 \bibitem  {JP}
{\sc P. Jorgensen and S. Pedersen}, {\it Dense analytic subspaces in fractal $L^2$ spaces}, J. Anal. Math. 75 (1998), 185-228.

%\bibitem  {K1984} {\sc N., Koblitz} {\it  p-adic numbers, p-adic analysis, and zeta-functions}, Second edition. Graduate Texts in Mathematics, 58. Springer-Verlag, New York, 1984. xii+150 pp.

\bibitem  {KM2006}
 {\sc M. N. Kolountzakis  and  M. Matolcsi}, {\it Tiles with no spectra}, Forum Math. 18(2006),519-528.

\bibitem {LX2016}
{\sc J. L. Li and D. Xing}, {\it  Multiple spectra of Bernoulli convolutions}, Proc. Edinb. Math. Soc. 60 (2016), 187-202.

\bibitem  {LW2002}
{\sc I. {\L}aba and Y. Wang}, {\it On spectral Cantor measures}, J. Funct. Anal. 193 (2002), 409-420.

\bibitem  {LW2006}
{\sc I. {\L}aba and Y. Wang}, {\it Some properties of spectral measures}, Appl. Comput. Harmon. Anal. 20 (2006), no. 1, 149-157.

\bibitem {L2011}
{\sc C. K. Lai}, {\it On Fourier frame of absolutely continuous measures},  J. Funct. Anal. 261 (2011), no. 10, 2877-2889.

\bibitem {LLP2021}
{\sc C. K. Lai, B. C. Liu and H. Prince }, {\it Spectral properties of some unions of linear spaces},  J. Funct. Anal. 280 (2021), no. 11, Paper No. 108985, 32 pp.

\bibitem  {LM2021}
{\sc N. Lev and M. Matolcsi}, {\it The Fuglede conjecture for convex domains is true in all dimensions}, Acta Mathematica, 228 (2022), no. 2, 385-420.

 \bibitem  {L2018}{\sc N. Lev}, {\it Fourier frames for singular measures and pure type phenomena}, Proc. Amer. Math. Soc. 146 (2018), no. 7, 2883-2896.

\bibitem {LW2023}
{\sc J. J. Li and Z. Y. Wu}, {\it On the quasi-Beurling dimensions of the spectra for planar Moran-type Sierpinski spectral measures},
Appl. Comput. Harmon. Anal. 62 (2023), 475-497.

\bibitem {LW2024}
{\sc J. J. Li and Z. Y. Wu}, {\it On the intermediate value property of spectra for a class of Moran spectral measures},
Appl. Comput. Harmon. Anal. 68 (2024), Paper No. 101606.


\bibitem {Li2011}
{\sc J. L. Li}, {\it Spectra of a class of self-affine measures}, J. Funct. Anal. 260  (2011), 1086-1095.

\bibitem {LMW2022}
{\sc W. X. Li, J. J. Miao and Z. Q. Wang}, {\it Weak convergence and spectrality of infinite convolutions}, Adv. Math. 404 (2022), part B, Paper No. 108425.

%\bibitem {LMW2024}{\sc W. X. Li, J. J. Miao and Z. Q. Wang}, {\it Spectrality of random convolutions generated by finitely many Hadamard triples}, Nonlinearity 37 (2024), no. 1, Paper No. 015003.

\bibitem{LZWC2021}
{\sc J. C. Liu, Y. Zhang, Z. Y. Wang and M. L. Chen}, {\it Spectrality of generalized Sierpinski-type self-affine measures}, Appl. Comput. Harmon. Anal. 55 (2021), 129-148.

%\bibitem {OS2002}{\sc J. Ortega-Cerd\`{a} and K. Seip}, {\it Fourier frames}, Ann. Math. (2) 155 (3)  (2002) 789-806.

%\bibitem  {S2018}
%{\sc R. X. Shi}, {\it  Spectrality of a class of Cantor-Moran measures}, J. Funct. Anal.,   276 (2019), no. 12, 3767-3794.

%\bibitem {Str1998}
%{\sc R. Strichartz}, {\it Remarks on: ``Dense analytic subspaces in fractal $L^2$-spaces''  by P. Jorgensen and S. Pedersen}, J. Anal. Math. 75 (1998),229-231.
\bibitem  {PA2023}
{\sc W. Y. Pan and W. H. Ai}, {\it Divergence of mock Fourier series for spectral measures},  Proc. Roy. Soc. Edinburgh Sect. A 153 (2023), no. 6, 1818-1832.

\bibitem  {R2000}
 {\sc A. M. Robert}, {\it  A course in p-adic analysis}, Graduate Texts in Mathematics, 198. Springer-Verlag, New York, 2000.

\bibitem  {Str2000}
{\sc R. Strichartz}, {\it  Mock Fourier series and Transforms associated with certain Cantor measures}, J. Anal. Math. 81 (2000), 209-238.

\bibitem  {Str2006}
{\sc R. Strichartz}, {\it Convergence of Mock Fourier series}, J. Anal. Math. 99 (2006), 333-353.

\bibitem  {T2004}
{\sc T. Tao}, {\it Fuglede's conjecture is false in 5 and higher dimensions}, Math. Res. Lett. 11 (2004), 251-258.

%\bibitem {WDL2018}{\sc Z. Y. Wang, X. H. Dong and Z. S. Liu}, {\it Spectrality of certain Moran measures with three-element digit sets}, J. Math. Anal. Appl. 459 (2018),743-752.
\end{thebibliography}
\end{document}